\documentclass[10pt]{amsart}

%%%%%%%%%%%%%%%%%%%%%%%%%%%%%%%%%%%%%%%%%%%%%%%%%%%%%%%%%%%%%%%%%%%%%%%%
%% packages & their options
%%%%%%%%%%%%%%%%%%%%%%%%%%%%%%%%%%%%%%%%%%%%%%%%%%%%%%%%%%%%%%%%%%%%%%%%

\usepackage{amsmath, amsthm}
\usepackage{eucal}
\usepackage{fullpage}

\usepackage{amssymb}
\usepackage{amscd}
\usepackage{palatino}
\usepackage{latexsym}
\usepackage{epsfig}
\usepackage{graphicx}
\usepackage{amsfonts}
\usepackage{psfrag}
\usepackage{color}
\usepackage{curves} 
\usepackage{times}
\usepackage{xcolor}

\usepackage{url}
\usepackage{cite}

\usepackage[margin=1in]{geometry}
\usepackage[pdftex]{hyperref}

%%%%%%%%%%%%%%%%%%%%%%%%%%%%%%%%%%%%%%%%%%%%%%%%%%%%%%%%%%%%%%%%%%%%%%%%
%% editorial options
%%%%%%%%%%%%%%%%%%%%%%%%%%%%%%%%%%%%%%%%%%%%%%%%%%%%%%%%%%%%%%%%%%%%%%%%

\oddsidemargin=0pt
\evensidemargin=0pt
\topmargin=0in

\setlength{\textwidth}{6.5in}

\numberwithin{equation}{section}
\numberwithin{figure}{section}

%%%%%%%%%%%%%%%%%%%%%%%%%%%%%%%%%%%%%%%%%%%%%%%%%%%%%%%%%%%%%%%%%%%%%%%%
%% theorem-like environments
%%%%%%%%%%%%%%%%%%%%%%%%%%%%%%%%%%%%%%%%%%%%%%%%%%%%%%%%%%%%%%%%%%%%%%%%

\newtheorem{theorem}{Theorem}[section]
\newtheorem*{theorem*}{Theorem}
\newtheorem{lemma}[theorem]{Lemma}
\newtheorem{proposition}[theorem]{Proposition}

\newtheorem{remark}[theorem]{Remark}

\newtheorem{example}[theorem]{Example}

\theoremstyle{definition}
\newtheorem{definition}[theorem]{Definition}

%%%%%%%%%%%%%%%%%%%%%%%%%%%%%%%%%%%%%%%%%%%%%%%%%%%%%%%%%%%%%%%%%%%%%%%%
%% fonts, commands, etc
%%%%%%%%%%%%%%%%%%%%%%%%%%%%%%%%%%%%%%%%%%%%%%%%%%%%%%%%%%%%%%%%%%%%%%%%

\newcommand{\al}{\alpha}
\newcommand{\be}{\beta}
\newcommand{\la}{\lambda}

\newcommand{\lee}{\langle}
\newcommand{\ree}{\rangle}

\newcommand{\mf}{\mathbf}
\newcommand{\mr}{\mathrm}
\newcommand{\mc}{\mathcal}

\newcommand{\OO}{\mc{O}}

\newcommand{\C}{{\mathbb{C}}}

\newcommand{\Z}{{\mathbb{Z}}}

\newcommand{\R}{{\mathbb{R}}}
\newcommand{\N}{{\mathbb{N}}}

\renewcommand{\P}{{\mathbb{P}}}

\newcommand{\mfi}{\mf{i}}
\newcommand{\mfm}{\mf{m}}

\newcommand{\mfla}{\mf{\la}}

\DeclareMathOperator{\codim}{codim}
%\DeclareMathOperator{\dim}{dim}

%%%%%%%%%%%%%%%%%%%%%%%%%%%%%%%%%%
%
% Spacing commands
%
%%%%%%%%%%%%%%%%%%%%%%%%%%%%%%%%%%

%%%%%%%%%%%%%%%%%%%%%%%%%%%%%%%%%%%%%%%%%%%%%%%%%%%%%%%%%%%%%%%%%%%%%%%%
%% **end of header
%%%%%%%%%%%%%%%%%%%%%%%%%%%%%%%%%%%%%%%%%%%%%%%%%%%%%%%%%%%%%%%%%%%%%%%%

%%%%%%%%%%%%%%%%%%%%%%%%%%%%%%%%%%%%%%%%%%%%%%%%%%%%%%%%%%%%%%%%%%%%%%%%
%%%%%%%%%%%%%%%%%%%%%%%%%%%%%%%%%%%%%%%%%%%%%%%%%%%%%%%%%%%%%%%%%%%%%%%%

%%%%%%%%%%%%%%%%%%%%%%%%%%%%%%%%%%%%%%%%%%%%%%%%%%%%%%%%%%%%%%%%%%%%%%%%
%% **end of header
%%%%%%%%%%%%%%%%%%%%%%%%%%%%%%%%%%%%%%%%%%%%%%%%%%%%%%%%%%%%%%%%%%%%%%%%

%%%%%%%%%%%%%%%%%%%%%%%%%%%%%%%%%%%%%%%%%%%%%%%%%%%%%%%%%%%%%%%%%%%%%%%%
%%%%%%%%%%%%%%%%%%%%%%%%%%%%%%%%%%%%%%%%%%%%%%%%%%%%%%%%%%%%%%%%%%%%%%%%

\begin{document}

\title{Newton-Okounkov bodies  of Bott-Samelson varieties and Grossberg-Karshon twisted cubes}

\author{Megumi Harada}
\address{Department of Mathematics and
Statistics\\ McMaster University\\ 1280 Main Street West\\ Hamilton, Ontario L8S4K1\\ Canada}
\email{Megumi.Harada@math.mcmaster.ca}
\urladdr{\url{http://www.math.mcmaster.ca/Megumi.Harada/}}

\author{Jihyeon Jessie Yang}
\address{Department of Mathematics and
Statistics\\ McMaster University\\ 1280 Main Street West\\ Hamilton, Ontario L8S4K1\\ Canada}
\email{jyang@math.mcmaster.ca}
\urladdr{\url{http://www.math.mcmaster.ca/~jyang/}}
\thanks{}

\keywords{Bott-Samelson variety, Newton-Okounkov bodies, path operators, generalized Demazure modules}
\subjclass[2000]{Primary:14M15; Secondary: 20G05}

\date{\today}

%%%%%%%%%%%%%%%%%%%%%
%  Abstract
%%%%%%%%%%%%%%%%%%%%%

\begin{abstract}
We describe, under certain 
conditions, the Newton-Okounkov body
of a Bott-Samelson variety as a lattice polytope
defined by an explicit list of inequalities. The valuation that we use
to define the Newton-Okounkov body is different from that used
previously in the literature.  The polytope that arises is a special case of the 
Grossberg-Karshon twisted cubes studied by Grossberg and Karshon in connection to character formulae for irreducible $G$-representations and also studied previously by the authors 
in relation to certain toric varieties associated to Bott-Samelson varieties.
\end{abstract}

\maketitle

\setcounter{tocdepth}{1}

\tableofcontents

\section*{Introduction}

The main result of this paper is an explicit computation of a
Newton-Okounkov body associated to a Bott-Samelson variety, under
certain hypotheses. To place
our result in context, recall that the recent theory of
Newton-Okounkov bodies, introduced independently by Kaveh and
Khovanskii \cite{KavehKhovanskii2012} and Lazarsfeld and Mustata
\cite{LazarsfeldMustata2009}, associates to a complex algebraic
variety $X$ (equipped with some auxiliary data) a convex body of
dimension $n=\dim_\C(X)$. In some cases, this convex body (the
\textbf{Newton-Okounkov body}, also called \textbf{Okounkov body}) is a rational polytope; indeed, if $X$
is a projective toric variety, then one can recover the usual moment
polytope of $X$ as a Newton-Okounkov body. These Newton-Okounkov
bodies have been shown to be related to many other research areas,
including (but certainly not limited to) toric degenerations
\cite{Anderson2013}, representation theory \cite{Kaveh2011},
symplectic geometry \cite{HaradaKaveh2015}, and Schubert calculus
\cite{Kiritchenko2013, Kiritchenko2014}. However, relatively
few explicit examples of Newton-Okounkov bodies
have been computed so far, and thus it is an interesting problem
to give new and concrete examples. 

Motivated by the above, in this paper we study the Newton-Okounkov
bodies of Bott-Samelson varieties; these varieties are well-known and studied in representation
theory due to their relation to Schubert varieties and flag varieties
(see e.g. \cite{Demazure1974}) and have been studied in the
context of Newton-Okounkov bodies. For instance, Anderson computed a Newton-Okounkov
body for an $SL(3,\C)$ example in \cite{Anderson2013}, they appear
in the proof of Kaveh's identification of Newton-Okounkov bodies as string polytopes
in \cite{Kaveh2011}, and Kiritchenko conjectures a description of 
some Newton-Okounkov bodies of
Bott-Samelson varieties using her divided-difference operators in
\cite{Kiritchenko2013}. Moreover, the global Newton-Okounkov body of Bott-Samelson varieties is studied by Sepp\"anen and Schmitz in \cite{SeppanenSchmitz2014}, where they show that it is rational polyhedral and also give an inductive description of it. 
Additionally, during the preparation of this manuscript we learned that Fujita has
also (independently) computed the Newton-Okounkov bodies of Bott-Samelson varieties
\cite{Fujita2015}. However, 
the valuation which we use in this paper (part of the auxiliary data necessary for the definition of a Newton-Okounkov body) is
\emph{different} from that associated to the ``vertical flag'' considered by Sepp\"anen and Schmitz \cite{SeppanenSchmitz2014}, the highest-term valuation used by Fujita and Kaveh  \cite{Fujita2015, Kaveh2011} and the geometric valuation used by Anderson and Kiritchenko in
\cite{Anderson2013, Kiritchenko2013} (cf. also Remark~\ref{remark:valuations}).

We now briefly recall the geometric objects of interest; for details see
Section~\ref{sec:background}. Let $G$ be a complex semisimple connected and simply connected linear algebraic group
and let $\{\alpha_1, \ldots, \alpha_r\}$ denote the set of
simple roots of $G$. Let $\mfi=(i_1, \ldots, i_n) \in
\{1,2,\ldots,r\}^n$ be a \emph{word} which specifies a sequence of
simple roots $\{\alpha_{i_1}, \ldots, \alpha_{i_n}\}$. We say that a
word is reduced if the corresponding sequence of simple roots
gives a reduced word decomposition $s_{\alpha_{i_1}} s_{\alpha_{i_2}} \cdots s_{\alpha_{i_n}}$ of an element in the Weyl group. Also let
$\mfm=(m_1, \ldots, m_n) \in \Z^n_{\geq 0}$ be a \emph{multiplicity
  list}; this specifies a sequence of weights $\{\lambda_1 := m_1
\varpi_{\alpha_{i_1}}, \ldots, \lambda_n :=  m_n \varpi_{\alpha_{i_n}}\}$ in the
weight lattice of $G$. Associated to $\mfi$ and $\mfm$ one can define
a Bott-Samelson variety $Z_\mfi$
(cf. Definition~\ref{def:bott-samelson}) and a line bundle
$L_{\mfi,\mfm}$ over it (cf. Definition~\ref{definition:line
  bundle}). The spaces of global sections $H^0(Z_\mfi, L_{\mfi,\mfm})$
appear in representation theory as so-called \emph{generalized
  Demazure modules}. We also consider a certain natural flag of
subvarieties
$Y_{\bullet}: Z_\mfi = Y_0 \supseteq Y_1 \supseteq \cdots \supseteq Y_{n-1}
\supseteq Y_n = \{\textup{pt}\}$ in $Z_\mfi$ and consider a valuation
$\nu_{Y_\bullet}$ on the spaces of sections $H^0(Z_\mfi,
L_{\mfi,\mfm}^{\otimes k})$ associated to $Y_\bullet$ (for details see
Section~\ref{section:NOBY}). Our main result
is the following; a more precise statement is given in
Theorem~\ref{theorem:main}. The polytope $P(\mfi,\mfm)$ and the ``condition \textbf{(P)}'' mentioned
in the statement of the theorem are discussed below. 

\begin{theorem*}
Let $\mfi=(i_1,\ldots,i_n) \in \{1,2,\ldots,r\}^n$ be a word and $\mfm=(m_1,\ldots,m_n) \in \Z^n_{\geq 0}$ be a
multiplicity list. Let $Z_\mfi$ and $L_{\mfi,\mfm}$ be the associated
Bott-Samelson variety and line bundle. Suppose that $\mfi$ is reduced
and the pair $(\mfi,\mfm)$ satisfies
condition \textbf{(P)}. Then 
the Newton-Okounkov body $\Delta(Z_\mfi, L_{\mfi,\mfm}, \nu_{Y_\bullet})$ of $Z_\mfi$, with respect
to the line bundle $L_{\mfi,\mfm}$ and the geometric valuation
$\nu_{Y_\bullet}$, is equal to $P(\mfi,\mfm)$ (up
to a reordering of coordinates). 
\end{theorem*}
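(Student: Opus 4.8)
The plan is to compute the Newton-Okounkov body directly from its definition, by understanding the image of the valuation $\nu_{Y_\bullet}$ on the section rings $\bigoplus_k H^0(Z_\mfi, L_{\mfi,\mfm}^{\otimes k})$, and then to match the resulting semigroup (after taking its closed convex hull) with the twisted cube $P(\mfi,\mfm)$. The essential geometric input is that a Bott-Samelson variety $Z_\mfi$ is an iterated $\P^1$-bundle: writing $Z_\mfi \to Z_{\mfi'}$ for the last projection (where $\mfi' = (i_1,\ldots,i_{n-1})$), the fiber is a $\P^1$, and the flag $Y_\bullet$ is adapted to this tower in the sense that $Y_n \subset Y_{n-1} \subset \cdots$ realizes a point sitting inside a chain of such $\P^1$-fibers. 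First I would set up coordinates: on a suitable open chart $U \cong \A^n$ of $Z_\mfi$ containing the point $Y_n$, with coordinates $(t_1,\ldots,t_n)$ such that $Y_j = \{t_1 = \cdots = t_j = 0\}$ locally, a section $s \in H^0(Z_\mfi, L_{\mfi,\mfm})$ restricted to $U$ becomes a polynomial (or Laurent-type) function, and $\nu_{Y_\bullet}(s)$ reads off the lexicographically-first exponent vector appearing in it. The task is then to determine exactly which exponent vectors occur.

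The key step is an inductive analysis along the $\P^1$-bundle structure. The line bundle $L_{\mfi,\mfm}$ is built from the tautological bundles of the successive $\P^1$-fibrations twisted by pullbacks, so a section decomposes as a sum over the fiber degree in the last variable $t_n$, with coefficients that are sections of line bundles on $Z_{\mfi'}$ twisted in a way governed by $m_n$ and the geometry of how $s_{\alpha_{i_n}}$ acts. Concretely, I expect $H^0(Z_\mfi, L_{\mfi,\mfm})$ to filter by the last coordinate of $\nu_{Y_\bullet}$, with the slice at value $\ell_n$ being (essentially) $H^0(Z_{\mfi'}, L_{\mfi',\mfm'}')$ for a modified multiplicity list depending on $\ell_n$ — this is the representation-theoretic shadow of the generalized Demazure module structure. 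Iterating, one obtains that the image semigroup consists of exactly those lattice points $(\ell_1,\ldots,\ell_n)$ for which, at each stage, $\ell_j$ ranges over an interval $[0, \text{(something depending on } \ell_{j+1},\ldots,\ell_n \text{ and } \mfm)]$; these are precisely the defining inequalities of the Grossberg-Karshon twisted cube $P(\mfi,\mfm)$. Here is where condition \textbf{(P)} enters: it should guarantee that the ``upper bounds'' produced at each stage are nonnegative, so that the twisted cube is genuinely a polytope (an honest, non-``negatively folded'' cube) and so that the inductive slices are nonzero of the expected dimension — without \textbf{(P)}, the section spaces could jump or the cube could develop negative-volume contributions, and the clean equality would fail. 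Reducedness of $\mfi$ is used to ensure the relevant Schubert-type geometry (e.g. that the tower of fibrations genuinely has the expected dimension $n$ and that the divisors $Y_j$ are as described).

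Finally I would assemble these pieces: homogeneity in $k$ shows the image semigroup in degree $k$ scales correctly, so the Newton-Okounkov body is the closed convex hull of $\bigcup_k \frac{1}{k}(\text{degree-}k\text{ image})$; by the inductive description this union already fills out all rational points of $P(\mfi,\mfm)$, and since $P(\mfi,\mfm)$ is a lattice polytope (as established in the authors' prior work on twisted cubes) the closure is $P(\mfi,\mfm)$ itself, up to the reordering of coordinates coming from whether one peels off fibers from the front or the back of the word. The main obstacle, I expect, is the inductive identification of the graded slices of the section ring with generalized Demazure modules for a modified multiplicity list in a way that is compatible with the valuation $\nu_{Y_\bullet}$ — i.e., showing that the ``leading exponent in $t_n$'' filtration on $H^0(Z_\mfi, L_{\mfi,\mfm})$ has associated graded pieces matching $H^0$ of the smaller Bott-Samelson variety, with the precise twist dictated by the Grossberg-Karshon recursion, and that condition \textbf{(P)} is exactly what makes every step of this recursion behave. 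Controlling the valuation on the fiber coordinate (not just the fiber degree, but the induced order of vanishing along $Y_{j}$ inside $Y_{j-1}$) will require a careful choice of local trivializations of the $L$'s compatible with the flag $Y_\bullet$.
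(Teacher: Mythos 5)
Your overall strategy --- computing the value semigroup directly by filtering $H^0(Z_\mfi, L_{\mfi,\mfm})$ according to the order of vanishing along $Y_1$ and identifying the graded slices with section spaces on the smaller Bott-Samelson variety --- is genuinely different from the paper's, and its central step is left as an unproven expectation. To conclude that \emph{every} lattice point of $P(\mfi,\mfm)$ is attained by $\nu_{Y_\bullet}$, you need the restriction maps $H^0(Z_\mfi, L_{\mfi,\mfm}\otimes\OO(-\ell_n Y_1)) \to H^0(Y_1, \cdot)$ to be surjective for each admissible $\ell_n$ (a cohomology vanishing statement for all intermediate twists), and you need the restricted bundle to remain amenable to induction. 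But by Lemma~\ref{lemma:restriction of line bundles} the restricted bundle is $L_{(i_1,\ldots,i_{n-1})}(m_1\varpi_{\beta_1},\ldots,m_{n-1}\varpi_{\beta_{n-1}}+m_n\varpi_{\beta_n}-\ell_n\beta_n)$, whose last weight is not a multiple of a fundamental weight, so the induction does not stay within the class of bundles indexed by multiplicity lists; you would have to run the whole argument for arbitrary weight sequences and re-derive the role of condition \textbf{(P)} there. You flag this identification of graded slices as ``the main obstacle'' yourself, but you do not supply it, and it is exactly the hard part: it is the SAGBI/Khovanskii-basis problem that the paper explicitly sets out to sidestep.

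The paper instead proves only the inclusion $\nu_{Y_\bullet}(H^0(Z_\mfi,L_{\mfi,\mfm})\setminus\{0\})\subseteq P(\mfi,\mfm)^{op}\cap\Z^n$ (Proposition~\ref{proposition:subset}, via intersection numbers of $\mr{div}(s)$ with the curves $C_j$ together with Lemmas~\ref{lemma:degree} and~\ref{lemma:restriction of line bundles}; reducedness enters through the Lauritzen--Thomsen effectivity result), and then obtains equality by a dimension count: a valuation with one-dimensional leaves has image of cardinality $\dim_\C H^0(Z_\mfi,L_{\mfi,\mfm})$, which by Lakshmibai--Littelmann--Magyar (Theorem~\ref{theorem:LLM}) equals $\lvert\mc{T}(\mfi,\mfm)\rvert$, which under condition \textbf{(P)} equals $\lvert P(\mfi,\mfm)\cap\Z^n\rvert$ by Proposition~\ref{proposition:bijection}. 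The final passage from the semigroup to the body (scaling $P(\mfi,r\mfm)=rP(\mfi,\mfm)$ and the lattice-polytope property) matches what you propose. To make your route work you would need to supply the surjectivity and vanishing statements above; absent those, the counting argument is the step your proposal is missing.
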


Both the polytope $P(\mfi,\mfm)$ and the ``condition \textbf{(P)}'' (defined precisely in Section~\ref{sec:bijection}) mentioned in the theorem 
have appeared previously in the literature. Indeed, 
the polytope $P(\mfi,\mfm)$ is a special case of the \emph{Grossberg-Karshon
  twisted cubes} which yield character formulae (possibly with sign) for
irreducible $G$-representations \cite{GrossbergKarshon1994}. Specifically, we showed
in \cite[Proposition 2.1]{HaradaYang2014} that if the pair $(\mfi,\mfm)$ satisfies
condition \textbf{(P)}, then the Grossberg-Karshon twisted cube is
equal to the polytope $P(\mfi,\mfm)$ and the Grossberg-Karshon
character formula from \cite{GrossbergKarshon1994} corresponding to
$\mfi$ and $\mfm$ is a \emph{positive} formula (i.e. with no negative
signs). We also related the condition \textbf{(P)} to the geometric
condition that a certain torus-invariant divisor $D$ in a toric variety
related to $Z_\mfi$ is basepoint-free \cite[Theorem
2.4]{HaradaYang2014}. For the purposes of the present manuscript, it is also significant that the polytope $P(\mfi, \mfm)$ is a \emph{lattice} polytope (not just a rational polytope) whose vertices can be easily described as the Cartier data of the torus-invariant divisor $D$ mentioned above \cite[Theorem 2.4]{HaradaYang2014}. Thus our theorem gives a computationally efficient description of the Newton-Okounkov body $\Delta(Z_\mfi, L_{\mfi,\mfm}, \nu_{Y_\bullet})$. 

We now sketch the main ideas in the proof of our main result
(Theorem~\ref{theorem:main}). To place the discussion in context, it may be useful to
recall that an essential step in the computation of a Newton-Okounkov
body of a variety $X$ is to compute a certain semigroup $S = S(R,\nu)$ associated to
the (graded) ring of sections $R=\oplus_k H^0(X, L^{\otimes k})$ for 
$L$ is a line bundle over $X$ and a choice of valuation $\nu$. In
general, this computation can be quite subtle; one of the main
difficulties is that the semigroup may not even be finitely
generated. (The issue of finite generation, in the context of
Newton-Okounkov bodies, is studied in \cite{Anderson2013}.) Even when
$S$ is finitely generated, finding explicit generators is related to
the problem of finding a ``SAGBI basis'' for $R$ with respect to the
valuation\footnote{Such a basis is also called a \textit{Khovanskii basis} in \cite[Section
8]{HaradaKaveh2015}, cf. also \cite[Section
5.6]{KavehKhovanskii2008}.} which appears to be non-trivial in
practice. In this manuscript, we are able to sidestep this subtle
issue and compute $S$ directly by a simple observation which we now
explain. It is a general fact that the valuations arising from flags
of subvarieties $Y_\bullet$ such as those above have
\emph{one-dimensional leaves}
(cf. Definition~\ref{definition:valuation}). It is also an elementary
fact that a valuation $\nu$ with one-dimensional leaves, defined on a
finite-dimensional vector space $V$, satisfies $\lvert \nu(V \setminus
\{0\}) \rvert = \dim_\C(V)$ \cite[Proposition
2.6]{KavehKhovanskii2012}. As it happens, in our setting the vector
spaces in question are precisely the generalized Demazure modules
$H^0(Z_\mfi, L_{\mfi,\mfm})$ mentioned above, and
Lakshmibai-Littelmann-Magyar prove in \cite{LakLitMag2002} that
$\dim_\C(H^0(Z_\mfi, L_{\mfi,\mfm})) = \lvert \mc{T}(\mfi,\mfm)
\rvert$ where $\mc{T}(\mfi,\mfm)$ is the set of \textbf{standard
  tableaux} associated to $\mfi$ and $\mfm$. Armed with this key
theorem of Lakshmibai-Littelmann-Magyar, we are able to compute our
semigroup $S$ and hence the Newton-Okounkov body explicitly in two
steps. On the one hand, we show in
Proposition~\ref{proposition:subset} that, assuming $\mfi$ is reduced, 
our geometric valuation
$\nu_{Y_\bullet}$ defined on $H^0(Z_\mfi, L_{\mfi,\mfm}) \setminus
\{0\}$ takes values in the polytope $P(\mfi,\mfm)$ (up to reordering
coordinates). On the other hand, we show in
Proposition~\ref{proposition:bijection} that, assuming that
$(\mfi,\mfm)$ satisfies condition \textbf{(P)}, there is a bijection
between the lattice points in $P(\mfi,\mfm)$ and the set of standard
tableaux $\mc{T}(\mfi,\mfm)$, so in particular $\lvert P(\mfi,\mfm)
\cap \Z^n \rvert = \lvert \mc{T}(\mfi, \mfm) \rvert$. Now a simple
counting argument and the fact that $P(\mfi,\mfm)$ is a lattice polytope finishes the proof of the main theorem.

We now outline the contents of the manuscript. In
Section~\ref{sec:background} we establish basic
terminology and notation, and also state the key result of
Lakshmibai-Littelmann-Magyar (Theorem~\ref{theorem:LLM}). The
statement and proof of the bijection between $\mc{T}(\mfi,\mfm)$ and
the lattice points in $P(\mfi,\mfm)$ occupies
Section~\ref{sec:bijection}. In the process we introduce a separate
``condition \textbf{(P')}'', stated directly in the language of paths and
root operators as in \cite{Littelmann1994, Littelmann1995, LakLitMag2002}, and prove in
Proposition~\ref{prop:P implies P'} that our polytope-theoretic
condition \textbf{(P)} implies condition \textbf{(P')}. It is then
straightforward to see that condition \textbf{(P')} implies that 
$\lvert P(\mfi,\mfm) \cap \Z^n \rvert =\lvert \mc{T}(\mfi,\mfm) \rvert$.
In Section~\ref{section:NOBY} we recall
in some detail the definition of a Newton-Okounkov body and define our
geometric valuation $\nu_{Y_\bullet}$ with respect to a certain flag
of subvarieties. We then prove in
Proposition~\ref{proposition:subset} that
$\nu_{Y_\bullet}$ takes values in our
polytope; as already explained, by using the bijection from Section~\ref{sec:bijection} our
main theorem then readily follows. Concrete examples and pictures for
$G=SL(3,\C)$ are contained in Section~\ref{sec:examples}.

We take a moment to comment on the combinatorics in
Section~\ref{sec:bijection}. It may well be that our polytope
$P(\mfi,\mfm)$, our conditions \textbf{(P)} and \textbf{(P')}, and our
Proposition~\ref{proposition:bijection}, are well-known 
or are minor variations on standard arguments in
combinatorial representation theory. However, we were unable to locate
exact references. 
We welcome comments from the experts. At any rate, as the discussion above indicates, Proposition~\ref{proposition:bijection}
is only a stepping stone to our main result 
(Theorem~\ref{theorem:main}). One final comment: in
Section~\ref{sec:bijection}, we chose to explain conditions
\textbf{(P)} and \textbf{(P')} separately, and to explicitly state and
prove the relation between them in Proposition~\ref{prop:P implies P'},
because we suspect that condition \textbf{(P')} may be more familiar to
experts in representation theory, whereas our condition \textbf{(P)} arises
from the toric-geometric considerations in
\cite{HaradaYang2014}. Put another way, our condition
\textbf{(P)} is a \emph{geometrically} motivated condition on $\mfi$ and
$\mfm$ which suffices to guarantee the condition \textbf{(P')}.

Finally, we mention some directions for future work. 
Firstly, we hope to better understand the relation between our computations and those in \cite{Fujita2015}. Secondly, our condition \textbf{(P)} on the pairs $(\mfi,\mfm)$
is rather restrictive and the 
corresponding Newton-Okounkov bodies are combinatorially
extremely simple (they are essentially cubes, though they can sometimes
degenerate). Hence it is a natural problem to ask for the relation, if
any, between the Newton-Okounkov bodies computed in this paper and
those for the line bundles which do \emph{not} satisfy the condition
\textbf{(P)}. It may be possible to analyze such a relationship using some results of
Anderson \cite{Anderson2013}, and we hope to take this up
in a future paper. Thirdly, it would be of
interest to examine the relation between our polytopes $P(\mfi,\mfm)$
and the polytopes arising from Kiritchenko's divided-difference
operators, particularly in relation to her ``degeneration of string
spaces'' in \cite[Section 4]{Kiritchenko2013}.

\medskip
\noindent \textbf{Acknowledgements.} 
We are grateful to Lauren DeDieu, Naoki Fujita, Dmitry Kerner,  Eunjeong Lee, and
Satoshi Naito for useful conversations, and to Henrik Sepp\"anen for explaining his work to us. We especially
thanks Dave Anderson for help with 
our arguments in Section~\ref{section:NOBY}, and  
to Kiumars Kaveh for pointing out a critical error in a
previous version of this manuscript. 
The first author was partially supported by an NSERC Discovery Grant,
a Canada Research Chair (Tier 2) Award, an Ontario Ministry of Research
and Innovation Early Researcher Award, an Association for Women in Mathematics Ruth Michler Award, 
and a Japan Society for the Promotion of Science Invitation Fellowship for
Research in Japan (Fellowship ID L-13517). 
Both authors thank the Osaka City University
Advanced Mathematics Institute for its hospitality while part of this
research was conducted. The first author also thanks the Department of
Mathematics at Cornell University for its hospitality during her
tenure as the Ruth Michler Visiting Fellow, when portions of this
manuscript were written. The second author also thanks Dong Youp Suh for his hospitality during her visit to the Department of Mathematical Sciences, KAIST.

\section{Preliminaries}\label{sec:background}

In this section we record basic notation in
Section~\ref{sec:notation}, recall the definitions of the central
geometric objects in Section~\ref{sec:bott}, and state a key result (Theorem~\ref{theorem:LLM})
of Lakshmibai, Littelmann, and Magyar in Section~\ref{sec:paths}.

\subsection{Notation}\label{sec:notation}

We list here some notation and conventions to be used in the manuscript. 

\begin{itemize}

\item $G$ is a complex semisimple connected and simply connected algebraic group over $\C$ and $\mathfrak{g}$ denotes its Lie algebra.

\item $H$ is a Cartan subgroup of $G$.

\item $B$ is a Borel subgroup of $G$ with $H \subset B \subset G$.  

\item $r$ is the rank of $G$.

\item $X$ denotes the weight lattice of $G$ 
and $X_\R = X \otimes_\Z \R$ is its real form. The Killing form\footnote{The Killing form is naturally defined on the Lie algebra of $G$ but its restriction to the Lie algebra $\mathfrak{h}$ of $H$ is positive-definition, so we may identify $\mathfrak{h} \cong \mathfrak{h}^*$.} 
on $X_\R$ is denoted by
 $\langle \alpha, \beta \rangle$.

\item For a weight $\alpha \in X$, we let $e^\alpha$ denote the corresponding multiplicative character $e^\alpha: H \to \C^*$. 

\item $\{\alpha_1, \ldots, \alpha_r\}$ is the set of positive simple roots (with an ordering) with respect to the choices $H \subset B \subset G$ and $\{\alpha^\vee_1, \ldots, \alpha^\vee_r\}$ are 
the corresponding coroots. Recall that the coroots satisfy 
\[
\alpha^\vee := \frac{2 \alpha}{\langle \alpha,\alpha \rangle}.
\]
In particular, $\langle \alpha,
\alpha^\vee \rangle = 2$ for any simple root $\alpha$. 

\item For a simple root $\alpha$ 
let $s_\alpha:X\rightarrow X,
\lambda\mapsto\lambda-\langle\lambda,\al^{\vee}\rangle \al,$ be the associated simple reflection; these generate the Weyl group $W$.

\item $\{\varpi_1, \ldots, \varpi_r\}$ is the set of fundamental weights satisfying $\langle \varpi_i, \alpha^\vee_j \rangle = \delta_{i,j}$.

\item For
 a simple root $\alpha$, $P_\alpha := B \cup
  B s_\alpha B$ is the
  minimal parabolic subgroup containing $B$ associated to $\alpha$.

\end{itemize}

\subsection{Bott-Samelson varieties}\label{sec:bott}

In this section, we briefly recall the definition of Bott-Samelson varieties and 
some facts about line bundles on Bott-Samelson varieties. Further
details may be found, for instance, in \cite{GrossbergKarshon1994}. Note that the
literature uses many different notational conventions.

With the notation in Section~\ref{sec:notation} in place, suppose 
given an arbitrary {\em word} in $\{1,2,\ldots, r\}$, i.e. a sequence 
$\mfi=(i_1,\dots,i_n)$ with $1 \leq i_j \leq r$. This specifies an
associated sequence of 
simple roots $\{\alpha_{i_1},
\alpha_{i_2}, \ldots, \alpha_{i_n}\}$. To simplify notation we define
$\beta_j := \alpha_{i_j}$, so the sequence above can be denoted
$\{\beta_1, \ldots, \beta_n\}$. Note that we do not assume here that the corresponding expression
$s_{\beta_1} s_{\beta_2} \cdots s_{\beta_n}$ is 
reduced; in particular, there may be repetitions. (However, we will add the
reducedness as a hypothesis in Section~\ref{section:NOBY}.) 

\begin{definition}\label{def:bott-samelson}
The \textbf{Bott-Samelson variety} corresponding to a word $\mfi=(i_1, \dots, i_n) \in \{1,2,\ldots,r\}^n$ is the quotient
\[Z_{\mfi}:=(P_{\beta_1}\times\cdots\times
P_{\beta_n})/B^n\]
where $\be_j = \alpha_{i_j}$ and $B^n$ acts on the right on $P_{\beta_1} \times \cdots \times
P_{\beta_n}$ by:
\[(p_1,\dots,p_n)\cdot
(b_1,\dots,b_n):=(p_1b_1,b_1^{-1}p_2b_2,\dots,b_{n-1}^{-1}p_nb_n).\]
\end{definition}

It is known that $Z_{\mfi}$ is a smooth projective algebraic variety of dimension
$n$. 
By convention, if $n=0$ and $\mfi$ is the empty word, we set
$Z_{\mfi}$ equal to a point.

We next describe certain line bundles over a Bott-Samelson variety. 
Suppose given a sequence $\{\mfla_1,\dots,\mfla_n\}$ of weights
$\mfla_j \in X$. We let $\C^*_{(\mfla_1,\dots,\mfla_n)}$ denote the one-dimensional representation of $B^n$ defined by 
\begin{equation}\label{def:rep}
(b_1,\dots,b_n)^{-1}\cdot k:=e^{\mfla_1}(b_1)\cdots e^{\mfla_n}(b_n)k.
\end{equation}
(Notice that this is isomorphic to the representation
$\C_{(-\mfla_1,\cdots,-\mfla_n)}$.)

\begin{definition}\label{definition:line bundle}
Let $\lambda_1, \ldots, \lambda_n$ be a sequence of weights. We define
the line bundle $L_{\mfi}(\lambda_1, \ldots, \lambda_n)$ over
$Z_{\mfi}$ to be 
\begin{equation}\label{eq:definition line bundle} 
L_{\mfi}(\lambda_1, \ldots, \lambda_n):=(P_{\be_1}\times\cdots
P_{\be_n})\times_{B^n} \C^*_{(\mfla_1,\dots,\mfla_n)}
\end{equation}
where the equivalence relation is given by 
 \[((p_1, \ldots, p_n) \cdot (b_1, \ldots, b_n),k)\sim ((p_1, \ldots,
 p_n), (b_1, \ldots, b_n) \cdot k)\]
 for $(p_1, \ldots, p_n) \in P_{\beta_1} \times\cdots\times
 P_{\beta_n}, (b_1, \ldots, b_n) \in
 B^n$, and $k\in\C$. The projection $L_{\mfi}(\lambda_1, \ldots, \lambda_n) \to Z_{\mfi}$
 to the base space is given by taking the first factor
 $[(p_1, \ldots, p_n, k)] \mapsto [(p_1, \ldots, p_n)] \in Z_{\mfi}$. 
\end{definition}
In what follows, we will frequently choose the weights $\lambda_j$ to be of a special form. 
Specifically, suppose given a \textbf{multiplicity list} 
$\mfm=(m_1,\dots,m_n) \in \Z_{\geq 0}^n$. Then we may define a
sequence of weights $\{\mfla_1, \ldots, \mfla_n\}$ associated to the
word $\mfi$ and the multiplicity list $\mfm$ by setting 
\begin{equation}\label{def:lambda}
\mfla_1:=m_1\varpi_{i_1},\dots, \mfla_n=m_n\varpi_{i_n}.
\end{equation}
In this special case we will use the notation 
\begin{equation}\label{eq:definition Lim}
L_{\mfi,\mfm} := L_{\mfi}(m_1 \varpi_{\beta_{i_1}},
\cdots, m_n \varpi_{\beta_{i_n}}).
\end{equation}

In this manuscript we will study the space of global sections of these line bundles. Note that 
the Borel subgroup acts on both $Z_{\mfi}$ and $L_{\mfi,\mfm}$ by left multiplication on the first coordinate: indeed, 
for $b\in B$, the equation $b\cdot [(p_1,\dots,p_n)]:=[(bp_1,p_2,\dots, p_n)]$ defines the action on $Z_{\mfi}$ and 
$b\cdot [(p_1,\dots,p_n,k)]:=[(bp_1,p_2,\dots, p_n,k)]$ defines the
action on $L_{\mfi,\mfm}$. It is straightforward to check that both
are well-defined. 
The space of global sections $H^0(Z_{\mfi},L_{\mfi,\mfm})$ is then
naturally a $B$-module; these are called \textit{generalized Demazure
  modules} (cf. for instance \cite{LakLitMag2002}).

\subsection{Paths and root operators}\label{sec:paths}

We use the machinery of paths and root operators as in 
\cite{LakLitMag2002} (cf. also \cite{Littelmann1994, Littelmann1995}) 
so in this section we briefly recall some necessary definitions and
basic properties. 

Let $X_{\R}:=X\otimes_{\Z}\R$ denote the real form of the
weight lattice.  
By a \textbf{path} we will mean a piecewise-linear map $\pi:[0,1]\rightarrow X_{\R}$ (up to reparametrization) with $\pi(0)=0$. 
We consider the set $\Pi \cup \{\mathbf{O}\}$ where
$\Pi$ denotes the set of all paths and $\mathbf{O}$ is a formal symbol.
For a weight $\la\in X$, we let $\pi^{\la}$ denote the straight-line path: $\pi^{\la}(t):= t\la$. 
By the symbol $\pi_1 \star \pi_2$ we mean the concatenation of two paths; more precisely, $\pi(t) = (\pi_1  \star \pi_2)(t)$ is defined by 
\begin{equation}\label{eq:concatenation}
\pi(t) := 
\begin{cases} 
\pi_1(2t) \textup{ if } 0 \leq t \leq 1/2 \\
\pi_1(1) + \pi_2(2t-1) \textup{ if } 1/2 \leq t \leq 1.
\end{cases} 
\end{equation} 
By convention we take $\pi \star \mathbf{O}
:= \pi$ for any element $\pi \in \Pi \cup \{\mathbf{O}\}$. 
For a simple root $\alpha$ and a path $\pi$, we define $s_\alpha(\pi)$
to be the path
given by $s_\alpha(\pi)(t) := s_\alpha(\pi(t))$, i.e., the path
$\pi$ is reflected by $s_\alpha$. 
We pay particular
attention to endpoints so we give it a name: given $\pi$ we say
the \textbf{weight} of $\pi$ is its endpoint, $\mr{wt}(\pi):=\pi(1)$
(also denoted $v(\pi)$ in the literature, see \cite{Littelmann1994}). 
The following is immediate from the definitions. 
\begin{lemma}\label{lemma:weights}
Let $\pi, \pi_1, \pi_2$ be paths in $\Pi$ and $\alpha$ a simple
root. Then $\mr{wt}(\pi_1 \star \pi_2) = \mr{wt}(\pi_1)+\mr{wt}(\pi_2)$ and
$\mr{wt}(s_\alpha(\pi) = s_\alpha(\mr{wt}(\pi))$.
\end{lemma}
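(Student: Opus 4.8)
The statement asserts two identities: $\mr{wt}(\pi_1 \star \pi_2) = \mr{wt}(\pi_1) + \mr{wt}(\pi_2)$ and $\mr{wt}(s_\alpha(\pi)) = s_\alpha(\mr{wt}(\pi))$. Both are genuinely ``immediate from the definitions,'' so the plan is simply to unwind the definitions of $\star$, $s_\alpha$ acting on paths, and $\mr{wt}$, and evaluate at the endpoint $t = 1$.

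For the first identity, I would recall from \eqref{eq:concatenation} that $(\pi_1 \star \pi_2)(t) = \pi_1(1) + \pi_2(2t-1)$ on the interval $1/2 \leq t \leq 1$. Setting $t = 1$ gives $(\pi_1 \star \pi_2)(1) = \pi_1(1) + \pi_2(1)$. Since $\mr{wt}(\pi) := \pi(1)$ by definition, this reads $\mr{wt}(\pi_1 \star \pi_2) = \mr{wt}(\pi_1) + \mr{wt}(\pi_2)$, as claimed. (One should also note the degenerate convention $\pi \star \mathbf{O} := \pi$, under which the identity holds trivially with the understanding that $\mr{wt}(\mathbf{O}) = 0$, but since the statement only concerns $\pi_1, \pi_2 \in \Pi$ this is not needed.)

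For the second identity, the definition gives $s_\alpha(\pi)(t) := s_\alpha(\pi(t))$ for all $t \in [0,1]$; in particular, at $t = 1$ we get $s_\alpha(\pi)(1) = s_\alpha(\pi(1))$. Applying the definition of $\mr{wt}$ to both sides yields $\mr{wt}(s_\alpha(\pi)) = s_\alpha(\mr{wt}(\pi))$. Here $s_\alpha$ on the right-hand side is the linear reflection $s_\alpha : X_\R \to X_\R$ extending the simple reflection on weights, so there is no subtlety in evaluating it pointwise.

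There is no real obstacle here: both parts are one-line verifications once the relevant definitions are substituted. The only minor point worth stating carefully is that the formula in \eqref{eq:concatenation} for $\pi_1 \star \pi_2$ on $[1/2, 1]$ is exactly the piece that controls the endpoint, so the concatenation's weight depends only on the two summands' endpoints and not on any reparametrization — consistent with paths being taken up to reparametrization.
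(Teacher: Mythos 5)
Your proof is correct and matches the paper's intent exactly: the paper states this lemma without proof as ``immediate from the definitions,'' and your two one-line verifications (evaluating the concatenation formula and the pointwise reflection at $t=1$) are precisely the intended argument.
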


Fix a simple root
$\alpha$. We now briefly recall the definitions of the \textbf{raising operator $e_\alpha$} and
\textbf{lowering operator $f_\alpha$} on the set $\Pi \cup
\{\mathbf{O}\}$, for which we need some preparation of notation. Fix a path $\pi \in \Pi$. We cut $\pi$ into 3 pieces
according to the behavior of the path $\pi$ under the projection with
respect to $\alpha$. More precisely, define the function 
\[
h_\alpha: [0,1] \to \R,   t \mapsto \langle \pi(t), \alpha^{\vee}
\rangle
\]
and let $Q$ denote the smallest integer attained by $h_\alpha$, i.e., 
\[
Q := \min \{ \textup{image}(h_\alpha) \cap \Z \}. 
\]
Note that since $\pi(0)=0$ by definition we always have $Q \leq
0$. Now let $q := \min \{ t \in [0,1]: h_\alpha(t) = \langle \pi(t),
\alpha^{\vee} \rangle = Q \}$ be the ``first'' time $t$ at which the
minimum integer value of $h_\alpha$ is attained. Next, in the case that $Q \leq
-1$ (note that if $Q = 0$ then, since $\pi(0)=0$, the value $q$ must
be $0$ and the following discussion is not applicable) then we define
$y$ to be the ``last time before $q$'' when the value $Q+1$ is
attained. More precisely, $y$ is defined by the conditions 
\[
h_\alpha(y) = Q+1, \textup{ and } Q < h_\alpha(t) < Q+1 \textup{ for }
y < t < q.
\]
We now define three paths $\pi_1, \pi_2, \pi_3$ in such a way that
$\pi$ is by definition the concatenation $\pi = \pi_1 \star \pi_2
\star \pi_3$, where $\pi_1$ is the path $\pi$ ``up to time $y$'',
$\pi_2$ is the path $\pi$ ``between $y$ and $q$'', and $\pi_3$ is the
path $\pi$ ``after time $q$''. More precisely, we define 
\[
\pi_1(t) := \pi(ty) \textup{ and } \pi_2(t) := \pi(y+t(q-y)) - \pi(y),
\textup{ and } 
\pi_3(t) := \pi(q+t(1-q)) - \pi(q).
\]
See \cite[Example, Section 1.2]{Littelmann1994} for a figure
illustrating an example in rank 2. 
Given this decomposition of $\pi$ into ``pieces'', we may now define
the \textbf{raising (root) operator $e_\alpha$} as follows. 

\begin{definition}\label{definition:raising operator}
Fix a path $\pi$.
If $Q=0$, i.e. if the path $\pi$ lies entirely in the closed
half-space defined by $\{h_\alpha > -1\}$, then $e_\alpha(\pi)=\mathbf{O}$,
where here $\mathbf{O}$ is the formal symbol in $\Pi \cup
\{\mathbf{O}\}$. 
If $Q<0$, then we define $e_\alpha(\pi) := \pi_1 \star
s_\alpha(\pi_2) \star \pi_3$, i.e. we ``reflect across $\alpha$'' the
portion of the path $\pi$ between time $y$ and time $q$. 
We also define $e_\alpha(\mathbf{O}) = \mathbf{O}$. 
\end{definition}

The \textbf{lowering (root) operator $f_\alpha$} 
may be defined similarly. This time, let $p$ denote the \emph{maximal}
real number in $[0,1]$ such that $h_\alpha(p)=Q$, i.e., it is the
``last'' time $t$ at which the minimal value $Q$ is attained. Then let
$P$ denote the integral part of $h_\alpha(1) - Q$. If $P \geq 1$, then
let $x$ denote the first time after $p$ that $h_\alpha$ achieves the
value $Q+1$; more precisely, let $x$ be the unique element in $(p,1]$
satisfying 
\[
h_\alpha(x)=Q+1 \textup{ and } Q < h_\alpha(t) < Q+1 \textup{ for } p
< t < x.
\]
Once again we may decompose the path $\pi$ into 3 components, $\pi =
\pi_1 \star \pi_2 \star \pi_3$ by the equations 
\begin{equation}\label{eq:definitions of pi_1 pi_2 pi_3}
\pi_1(t) := \pi(tp) \textup{ and } \pi_2(t) := \pi(p+t(x-p)) - \pi(p)\textup{
  and } \pi_3(t) := \pi(x+t(1-x)) - \pi(x).
\end{equation}
Given this decomposition, we define the \textbf{lowering (root)
  operator $f_\alpha$} as follows. 

\begin{definition}\label{definition:f_beta}
Fix a path $\pi$
as above. 
If $P \geq 1$, then we define $f_\alpha(\pi) := \pi_1 \star
s_\alpha(\pi_2) \star \pi_3$, so we 
``reflect across $\alpha$'' the
portion of the path $\pi$ between time $p$ and $x$. 
If $P=0$, 
then 
$f_\alpha(\pi)=\mathbf{O}$. Finally, we define $f_\alpha(\mathbf{O}) =
\mathbf{O}$. 
\end{definition}

The following basic properties of the root operators are recorded in
\cite[Section 1.4]{Littelmann1994}. 

\begin{lemma}\label{lem:Littelmann}
Let $\pi \in \Pi$ be a path. 
  \begin{enumerate}
    \item If $e_{\al}(\pi)\ne \mathbf{O}$, then $\mr{wt}(e_{\al}(\pi))=\mr{wt}(\pi)+\al$, and if $f_{\al}(\pi)\ne \mathbf{O}$, then $\mr{wt}(f_{\al}(\pi))=\mr{wt}(\pi)-\al$.
    \item If $e_{\al}(\pi)\ne \mathbf{O}$, then $f_{\al}(e_{\al}(\pi))=\pi.$ If $f_{\al}(\pi)\ne \mathbf{O}$, then $e_{\al}(f_{\al}(\pi))=\pi$.
    \item $e_{\al}^n(\pi)=\mathbf{O}$ if and only if $n>-Q$, and
      $f_{\al}^n\pi=\mathbf{O}$ if and only if $n>P$. %\langle \pi(1),\al^{\vee}\rangle -\mr{min}_{\al}(\pi).$
 \end{enumerate}
\end{lemma}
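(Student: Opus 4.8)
The three statements are the basic combinatorial facts about root operators from \cite[Section 1.4]{Littelmann1994}, and I would prove them by carefully tracking the height function $h_\alpha(t) = \langle \pi(t), \alpha^\vee\rangle$ through the surgery defining $e_\alpha$ and $f_\alpha$. Statement (1) is immediate: when $e_\alpha(\pi)\neq\mathbf{O}$ we have $e_\alpha(\pi)=\pi_1\star s_\alpha(\pi_2)\star\pi_3$, so by Lemma~\ref{lemma:weights}, $\mr{wt}(e_\alpha(\pi)) = \mr{wt}(\pi_1)+s_\alpha(\mr{wt}(\pi_2))+\mr{wt}(\pi_3)$; since $\mr{wt}(\pi_2)=\pi(q)-\pi(y)$ with $h_\alpha(q)=Q$ and $h_\alpha(y)=Q+1$, we get $\langle\mr{wt}(\pi_2),\alpha^\vee\rangle=-1$, hence $s_\alpha(\mr{wt}(\pi_2))=\mr{wt}(\pi_2)+\alpha$ and $\mr{wt}(e_\alpha(\pi))=\mr{wt}(\pi)+\alpha$. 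The computation for $f_\alpha$ is identical with $\langle\mr{wt}(\pi_2),\alpha^\vee\rangle=+1$.

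The engine behind (2) and (3) is a single bookkeeping lemma: \emph{if $Q\le -1$ then $e_\alpha(\pi)\neq\mathbf{O}$ has new invariants $Q'=Q+1$ and $P'=P+1$; if $P\ge 1$ then $f_\alpha(\pi)\neq\mathbf{O}$ has new invariants $Q'=Q-1$ and $P'=P-1$.} To prove this I would first observe, using $h_\alpha(0)=0>Q$ and the intermediate value theorem, that $h_\alpha>Q$ on $[0,q)$ and on $(p,1]$; and that since the image of $h_\alpha$ is an interval containing $0$, its minimum lies in $(Q-1,Q]$, so $h_\alpha$ never takes an integer value below $Q$. Now in $e_\alpha(\pi)=\pi_1\star s_\alpha(\pi_2)\star\pi_3$, the piece $\pi_1$ (times $[0,y]$) is unchanged; the reflected piece $s_\alpha(\pi_2)$ runs from height $Q+1$ up to height $Q+2$; and $\pi_3$ (times $[q,1]$) is translated so that $h_\alpha$ there increases by $2$. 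Collecting integer values in the three pieces shows the new minimum integer value is exactly $Q+1$, attained at $y$, giving $Q'=Q+1$; and $h_\alpha$ at the endpoint increases by $2$ while $Q$ increases by $1$, so $P'=\lfloor h_\alpha(1)+2\rfloor-(Q+1)=P+1$. The statement for $f_\alpha$ is the mirror image (the reflected piece drops from $Q$ to $Q-1$ and the tail is translated down by $2$).

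Given this, (3) follows by a one-line induction: $e_\alpha(\pi)=\mathbf{O}$ iff $Q=0$ iff $1>-Q$, and if $Q\le -1$ then $e_\alpha^n(\pi)=e_\alpha^{n-1}(e_\alpha(\pi))$, which is $\mathbf{O}$ iff $n-1>-(Q+1)$ iff $n>-Q$; symmetrically for $f_\alpha$ using $P'=P-1$ and $f_\alpha(\pi)=\mathbf{O}\iff P=0$. For (2), suppose $e_\alpha(\pi)\neq\mathbf{O}$ and set $\pi'=e_\alpha(\pi)$; I would show that the data cutting $\pi'$ for the operator $f_\alpha$ is $p'=y$ and $x'=q$. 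Indeed $h_{\alpha}'(y)=Q+1=Q'$, and after time $y$ the path $\pi'$ has $h_\alpha'>Q+1$ (strictly, on the reflected middle piece and then $\ge Q+2$ on the tail), so $y$ is the \emph{last} time $h_\alpha'=Q'$; and on the reflected middle piece $h_\alpha'$ rises from $Q+1$ to $Q+2$ hitting $Q+2$ only at its end, so $x'=q$. Hence the middle piece of $\pi'$ for $f_\alpha$ is exactly $s_\alpha(\pi_2)$, and $f_\alpha(\pi')=\pi_1\star s_\alpha(s_\alpha(\pi_2))\star\pi_3=\pi$ because $s_\alpha^2=\mathrm{id}$ (one also checks $P'=P+1\ge 1$ so $f_\alpha(\pi')\neq\mathbf{O}$). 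The reverse identity $e_\alpha(f_\alpha(\pi))=\pi$ is proved the same way with the roles of $(p,x)$ and $(y,q)$ interchanged.

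The main obstacle is purely the bookkeeping in the central lemma: one must be careful that the non-integer excursions of $h_\alpha$ below the level $Q$ never introduce a new integer value, and that the reflect-and-translate surgery leaves the pieces $\pi_1,\pi_3$ genuinely unchanged apart from the prescribed vertical shift, so that the $Q$ and $P$ invariants move by exactly $1$. Once that is pinned down, statements (1)–(3) are formal consequences as above.
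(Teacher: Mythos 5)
The paper does not prove this lemma at all: it simply records these properties as quoted from Littelmann (Section 1.4 of the cited paper on the Littlewood--Richardson rule for symmetrizable Kac--Moody algebras), so there is no in-paper argument to compare against. Your proposal is a correct, self-contained verification and is essentially the standard one. The weight computation in (1) is right ($\langle \mr{wt}(\pi_2),\alpha^\vee\rangle=-1$ for the $e_\alpha$-surgery and $+1$ for the $f_\alpha$-surgery, so reflecting the middle piece shifts the endpoint by $\pm\alpha$), and your central bookkeeping lemma ($Q'=Q+1$, $P'=P+1$ after applying $e_\alpha$, and the mirror statement for $f_\alpha$) is exactly the right engine: the first piece is untouched and attains no integer below $Q+1$ before time $q$, the reflected middle piece has $h_\alpha$ rising monotonically in value from $Q+1$ to $Q+2$ with no intermediate integers, and the translated tail has all its integer values shifted up by $2$; your observation that the real minimum of $h_\alpha$ lies in $(Q-1,Q]$ is precisely what is needed to control the tail. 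The identification $p'=y$, $x'=q$ for the inverse surgery in (2), together with $s_\alpha^2=\mathrm{id}$, is also correct. The only caveat worth flagging is one inherited from the paper's statement rather than introduced by you: for a completely arbitrary piecewise-linear $\pi$ the endpoint pairing $h_\alpha(1)$ need not be integral or $\ge Q$, and Littelmann imposes integrality hypotheses to guarantee $P\ge 0$; for the paths actually used in this paper (concatenations of $\pi^{\lambda}$ and their images under root operators) this is automatic, so your argument applies where it is needed.
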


We now recall a result (Theorem~\ref{theorem:LLM} below) of Lakshmibai, Littelmann, and Magyar
\cite{LakLitMag2002} which is crucial to our arguments in the
remainder of this paper. 
Specifically, Theorem~\ref{theorem:LLM} gives a bijective correspondence between a
certain set $\mc{T}(\mfi, \mfm)$ of standard tableaux, defined below
using paths and the root operators, and a 
basis of the vector space $H^0(Z_\mfi, L_{\mfi,\mfm})$ of global sections
of $L_{\mfi,\mfm}$ over $Z_\mfi$. 
Our main result in Section~\ref{sec:bijection} is that -- under certain conditions
on the word $\mfi$ and the multiplicity list $\mfm$ -- 
there exists, in turn, a bijection between $\mc{T}(\mfi,\mfm))$ and the set
of integer lattice
points in a certain polytope. This then allows us to
compute Newton-Okounkov bodies associated to $Z_\mfi$ and $L_{\mfi,
  \mfm}$ in Section~\ref{section:NOBY}. 

We now recall the definition of standard tableaux. 
Suppose given a word $\mfi$ and multiplicity list $\mfm$ as above. Let
$\{\beta_1=\alpha_{i_1}, \ldots, \beta_n=\alpha_{i_n}\}$ be the sequence of simple roots
associated to $\mfi$ and set $\lambda_j := m_j \be_j$ for $1 \leq j \leq n$. The following 
is from \cite[Section 1.2]{LakLitMag2002}.

\begin{definition}\label{definition:standard tableau}
A path $\pi \in \Pi$ is called a
\textbf{(constructable) standard tableau of shape $\lambda =
  (\lambda_1, \ldots, \lambda_n)$} if there exist integers $\ell_1,
\ldots, \ell_n \in \Z_{\geq 0}$ such that 
\[
\pi = f_{\beta_1}^{\ell_1}(\pi^{\lambda_1} \star
f_{\beta_2}^{\ell_2}(\pi^{\lambda_2} \star \cdots
f_{\beta_n}^{\ell_n}(\pi^{\lambda_n}) \cdots )
\]
where the $f_{\beta_j}$ are the lowering operators defined
above. Given a word $\mfi=(i_1, \ldots, i_n)$ and multiplicity list
$\mfm=(m_1,\ldots,m_n)$, we denote by $\mc{T}(\mfi,\mfm)$ the set of
standard tableau of shape $(\lambda_1=m_1 \varpi_{\beta_1}, \ldots,
\lambda_n=m_n \varpi_{\be_n})$. 
\end{definition}

It turns out that there are only finitely many standard tableau of a given shape $\lambda$
associated to a given pair $(\mfi, \mfm)$.
In fact, 
Lakshmibai, Littelmann, and Magyar prove
\cite[Theorems 4 and 6]{LakLitMag2002} the following.

\begin{theorem}\label{theorem:LLM}
Let $\mfi = (i_1, \ldots, i_n) \in \{1,\ldots, r\}^n$ be a word and
$\mfm = (m_1, \ldots, m_n) \in \Z^n_{\geq 0}$ be a multiplicity
list. Let
$\{\beta_1=\alpha_{i_1}, \ldots, \beta_n=\alpha_{i_n}\}$ be the sequence of simple roots
associated to $\mfi$ and set $\lambda_j := m_j \be_j$ for $1 \leq j \leq n$.
Then 
\[\lvert \mc{T}(\mfi, \mfm) \rvert = \dim_\C H^0(Z_\mfi, L_{\mfi,\mfm}).\]
\end{theorem}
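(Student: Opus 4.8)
The plan is to establish the stronger statement that the \emph{characters} of the two sides agree, and then specialize to recover the equality of dimensions. For a simple root $\alpha_i$ write $D_i$ for the Demazure operator on the group ring $\Z[X]$, $D_i(e^{\mu}) := (e^{\mu} - e^{-\alpha_i}e^{s_{\alpha_i}(\mu)})/(1 - e^{-\alpha_i})$ extended $\Z$-linearly, which carries Laurent polynomials to Laurent polynomials even though it has a denominator. I would show that both $\mr{char}\, H^0(Z_\mfi, L_{\mfi,\mfm})$ and $\sum_{\pi \in \mc{T}(\mfi,\mfm)} e^{\mr{wt}(\pi)}$ are equal to
\[
D_{i_1}\!\big( e^{\lambda_1} D_{i_2}\!\big( e^{\lambda_2}\cdots D_{i_n}(e^{\lambda_n})\cdots\big)\big),
\]
where $\lambda_j = m_j \varpi_{i_j}$. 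Granting this, substituting $e^{\alpha}\mapsto 1$ for all roots $\alpha$ (legitimate since all the quantities in play are honest Laurent polynomials) yields $\dim_\C H^0(Z_\mfi,L_{\mfi,\mfm})$ from the first expression and $\lvert \mc{T}(\mfi,\mfm)\rvert$ from the second, which is the theorem.

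Both character identities would be proved by induction on the word length $n$, stripping the first letter; set $\mfi' := (i_2,\dots,i_n)$ and $\mfm' := (m_2,\dots,m_n)$. The base case $n=1$ is the Borel--Weil computation on $Z_{(i_1)} = P_{\beta_1}/B \cong \P^1$: the space $H^0(\P^1,\OO(m_1))$ is $(m_1+1)$-dimensional with $H$-weights $\lambda_1, \lambda_1-\beta_1,\dots,\lambda_1-m_1\beta_1$, matching $\mc{T}((i_1),(m_1)) = \{ f_{\beta_1}^{\ell}\pi^{\lambda_1} : 0\le\ell\le m_1\}$, which follows from Lemma~\ref{lem:Littelmann}(3) since $h_{\beta_1}(t) = t\,m_1$ for the straight-line path $\pi^{\lambda_1}$. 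For the inductive step on the geometric side, I would use the factorization of the Bott--Samelson variety as a locally trivial fibration $p\colon Z_\mfi = P_{\beta_1}\times_B Z_{\mfi'} \to P_{\beta_1}/B \cong \P^1$ with fibre $Z_{\mfi'}$, under which $L_{\mfi,\mfm}$ restricts on each fibre to $L_{\mfi',\mfm'}$ and carries an extra $B$-linearization twisted by $\lambda_1$ (up to a sign fixed by conventions). One first checks the vanishing $R^{>0}p_*L_{\mfi,\mfm} = 0$ and $H^{>0}(\P^1, p_*L_{\mfi,\mfm}) = 0$, so that by the Leray spectral sequence together with the inductive vanishing on $Z_{\mfi'}$ one gets $H^{>0}(Z_\mfi, L_{\mfi,\mfm}) = 0$ and $H^0(Z_\mfi, L_{\mfi,\mfm}) \cong H^0(\P^1, p_*L_{\mfi,\mfm})$; this is the standard cohomology-vanishing statement for line bundles on Bott--Samelson varieties attached to dominant data, itself proved by the same $\P^1$-fibration induction. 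Then $p_*L_{\mfi,\mfm}$ is the $P_{\beta_1}$-equivariant bundle over $\P^1$ associated to the $B$-module $H^0(Z_{\mfi'}, L_{\mfi',\mfm'})$ twisted by $\lambda_1$, and Demazure's classical $\mathrm{SL}_2$-computation gives $\mr{char}\, H^0(\P^1, p_*L_{\mfi,\mfm}) = D_{i_1}\big(e^{\lambda_1}\,\mr{char}\, H^0(Z_{\mfi'}, L_{\mfi',\mfm'})\big)$, closing the induction for the first character.

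For the combinatorial side, Definition~\ref{definition:standard tableau} gives directly $\mc{T}(\mfi,\mfm) = \{ f_{\beta_1}^{\ell}(\pi^{\lambda_1}\star\sigma) : \sigma\in\mc{T}(\mfi',\mfm'),\ \ell\ge 0\}\setminus\{\mathbf{O}\}$, so what is needed is the recursion $\sum_{\pi\in\mc{T}(\mfi,\mfm)} e^{\mr{wt}(\pi)} = D_{i_1}\big(e^{\lambda_1}\sum_{\sigma\in\mc{T}(\mfi',\mfm')} e^{\mr{wt}(\sigma)}\big)$, which is exactly Littelmann's path-model form of the (generalized) Demazure character formula. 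The mechanism is that $\pi^{\lambda_1}$, being the straight path to the dominant weight $\lambda_1 = m_1\varpi_{i_1}$, is annihilated by $e_{\alpha_i}$ for every $i$ (its $h_{\alpha_i}$ is non-negative), and one analyzes the action of $e_{\beta_1}$ and $f_{\beta_1}$ on the concatenations $\pi^{\lambda_1}\star\sigma$, organizing the family $\{ f_{\beta_1}^{\ell}(\pi^{\lambda_1}\star\sigma)\}$ into complete $\beta_1$-strings, each of whose generating functions realizes an application of $D_{i_1}$ (here Lemma~\ref{lem:Littelmann} and Lemma~\ref{lemma:weights} are used); summing over $\sigma$ produces the recursion. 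Comparing the two recursions and specializing as in the first paragraph completes the proof.

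I expect the main obstacle to be precisely this last combinatorial lemma: that the lowering operators acting on the family $\{\pi^{\lambda_1}\star\sigma : \sigma\in\mc{T}(\mfi',\mfm')\}$ reproduce the Demazure operator $D_{i_1}$ exactly. There are two delicate points. First, the family need not literally be stable under $e_{\beta_1}$ --- a $\sigma$ whose $\beta_1$-height drops by more than $m_1$ breaks this --- so one has to exploit the dominance of $\lambda_1$ together with Littelmann's finer analysis of LS paths rather than a naive string decomposition. Second, one must verify that no path of $\mc{T}(\mfi,\mfm)$ is produced by two distinct pairs $(\ell,\sigma)$, so that the generating function over the \emph{set} $\mc{T}(\mfi,\mfm)$ coincides with the corresponding double sum. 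These are the heart of the Lakshmibai--Littelmann--Magyar argument; by contrast the cohomology-vanishing input on the geometric side and the sign-bookkeeping of the $\lambda_1$-twist are routine. (An alternative that sidesteps characters, also carried out by Lakshmibai--Littelmann--Magyar, is to build explicit ``path vectors'' $p_\pi \in H^0(Z_\mfi,L_{\mfi,\mfm})$ for $\pi\in\mc{T}(\mfi,\mfm)$, establish their linear independence via distinct leading terms, and establish spanning via straightening relations; this needs a standard-monomial-type apparatus in place of the Demazure recursion.)
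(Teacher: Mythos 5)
The paper does not prove this statement: Theorem~\ref{theorem:LLM} is imported verbatim as a black box from Lakshmibai--Littelmann--Magyar (the text cites their Theorems 4 and 6 immediately before the statement), so there is no in-paper proof to compare against. Your sketch is, in substance, a faithful reconstruction of how the result is actually established in the literature: the geometric recursion $\mr{char}\, H^0(Z_\mfi,L_{\mfi,\mfm}) = D_{i_1}\bigl(e^{\lambda_1}\,\mr{char}\, H^0(Z_{\mfi'},L_{\mfi',\mfm'})\bigr)$ via the $\P^1$-fibration plus cohomology vanishing, the matching path-model recursion for $\sum_{\pi\in\mc{T}(\mfi,\mfm)}e^{\mr{wt}(\pi)}$ due to Littelmann, and the standard-monomial alternative you mention in parentheses are all genuine components of the LLM argument. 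You also correctly isolate the two non-routine inputs: the higher-cohomology vanishing for the associated bundle of the $B$-module $\C_{-\lambda_1}\otimes H^0(Z_{\mfi'},L_{\mfi',\mfm'})$ (which is a real theorem, not a formal Leray argument --- your proposed ``same induction'' silently needs excellent-filtration or Frobenius-splitting technology, so I would cite it rather than claim it is routine), and the fact that the $\beta_1$-strings through $\pi^{\lambda_1}\star\sigma$ do \emph{not} individually realize $D_{i_1}$ when $e_{\beta_1}(\pi^{\lambda_1}\star\sigma)\neq\mathbf{O}$, so that cancellation between strings is required --- this is exactly the phenomenon the paper's conditions \textbf{(P)} and \textbf{(P')} are designed to rule out in Section~\ref{sec:bijection}. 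One small slip: to extract dimensions you should specialize $e^{\mu}\mapsto 1$ for all weights $\mu$ (the augmentation of $\Z[X]$), not merely for the roots, since $X$ is generally larger than the root lattice. As a blind attempt at a theorem the paper only cites, this is an accurate and well-calibrated outline rather than a complete proof, and it would be appropriate to keep the citation to \cite{LakLitMag2002} in any case.
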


\section{A bijection between standard tableaux and lattice points in a
polytope}\label{sec:bijection}

The main result of this section
(Proposition~\ref{proposition:bijection}) is that, under a certain 
assumption on the word $\mfi$ and the multiplicity list $\mfm$, there is a bijection between the set
of integer lattice points within a certain lattice polytope $P(\mfi,
\mfm)$ and the set of standard tableaux $\mc{T}(\mfi,\mfm)$. Together
with Theorem~\ref{theorem:LLM} this then implies that the cardinality
of $P(\mfi, \mfm) \cap \Z^n$ is equal to the dimension of the space
$H^0(Z_{\mfi}, L_{\mfi,\mfm})$ of sections of the line
bundle $L_{\mfi, \mfm}$ over the Bott-Samelson
variety $Z_{\mfi}$. 
This then allows us to compute Newton-Okounkov bodies in the next
section. The necessary hypothesis on $\mfi$ and $\mfm$, which we call
\textbf{``condition (P)''}, also appeared in our previous work
\cite{HaradaYang2014} connecting the polytopes $P(\mfi,\mfm)$ with
representation theory and toric geometry
(cf. Remark~\ref{remark:toric} below).

We begin with the definition of the polytope $P(\mfi,\mfm)$ by 
an explicit set of inequalities. 

\begin{definition}\label{def:polytope}
Let $\mfi = (i_1, \ldots, i_n) \in \{1,\ldots, r\}^n$ be a word and
$\mfm = (m_1, \ldots, m_n) \in \Z^n_{\geq 0}$ be a multiplicity
list. Then the polytope $P(\mfi,\mfm)$ is defined to be the 
set of all real points $(x_1,\dots,x_n) \in \R^n$ satisfying  the following inequalities: 
\[\begin{array}{ccccl}
0&\le& x_n &\le  &A_n:= m_n,\\
0&\le &x_{n-1}&\le & A_{n-1}(x_n):= \lee m_{n-1}\varpi_{\be_{n-1}}+m_n\varpi_{\be_n}-x_n\be_n,\be_{n-1}^{\vee}\ree,\\
0&\le& x_{n-2}&\le & A_{n-2}(x_{n-1},x_n):=\lee m_{n-2}\varpi_{\be_{n-2}}+m_{n-1}\varpi_{\be_{n-1}}+m_{n}\varpi_{\be_n}-x_{n-1}\be_{n-1}-x_n\be_n,\be_{n-2}^{\vee}\ree\\
  &   &\vdots& &\\
0&\le&x_1&\le& A_1(x_2,\dots,x_n):=\lee m_1\varpi_{\be_1}+m_2\varpi_{\be_2}+\cdots + m_n\varpi_{\be_n}-x_2\be_2-\cdots-x_n\be_n,\be_1^{\vee}\ree
\end{array}\]
\end{definition}

\begin{remark}\label{remark:toric} 
\begin{itemize} 
\item The polytopes $P(\mfi,\mfm)$ have appeared previously in the
  literature and has connections to toric geometry and representation theory. Specifically, under a hypothesis on $\mfi$ and $\mfm$
  which we call ``condition \textbf{(P)}'' (see Definition~\ref{definition:conditionP}), we show
  in \cite{HaradaYang2014} that $P(\mfi,\mfm)$ is exactly a so-called
  \textit{Grossberg-Karshon twisted cube}. These twisted cubes were introduced in
  \cite{GrossbergKarshon1994} in connection with Bott towers and
  character formulae for irreducible $G$-representations. Our proof of
  this fact in \cite{HaradaYang2014} used a certain torus-invariant
  divisor in a toric variety associated to Bott-Samelson varieties
  studied by Pasquier \cite{Pasquier2010}.
\item The functions $A_k(x_{k+1},\ldots,x_n)$ appearing in
Definition~\ref{def:polytope} also have a natural interpretation in terms
of paths, as we shall see in Lemma~\ref{lemma:endpoints} below; this is useful in our proof of
Proposition~\ref{proposition:bijection}. 
\end{itemize} 
\end{remark}

In the statement of our main proposition of this section, we need the
following technical hypothesis on the word and the multiplicity
list. As noted above, the same condition appeared in our previous work
\cite{HaradaYang2014} which related the polytope $P(\mfi,\mfm)$ to 
toric geometry and representation theory.

\begin{definition}\label{definition:conditionP}
Let $\mfi = (i_1, \ldots, i_n) \in \{1,\ldots, r\}^n$ be a word and
$\mfm = (m_1, \ldots, m_n) \in \Z^n_{\geq 0}$ be a multiplicity
list.
We say that the pair $(\mfi, \mfm)$ 
  \textbf{satisfies condition (P)} if
  \begin{enumerate}
  \item[(P-n)] $m_n \geq 0$
  \end{enumerate}
  and for every integer $k$ with $1 \leq k \leq n-1$, the following statement, which we refer to as condition (P-k),
    holds:
\begin{enumerate}
\item[(P-k)]
      if $(x_{k+1}, \ldots, x_n)$ satisfies
     \begin{equation*}
      \begin{array}{ccccl}
      0& \leq& x_n& \leq& A_n \\
      0& \leq &x_{n-1}& \leq& A_{n-1}(x_n) \\
    && \vdots&& \\
     0 &\leq& x_{k+1}& \leq& A_{k+1}(x_{k+2}, \ldots, x_n), \\
       \end{array}
       \end{equation*}
   then \[A_k(x_{k+1},\ldots, x_n) \geq 0.\] \end{enumerate}
In particular, condition \textbf{(P)} holds if and only if the conditions (P-1) through (P-n) all hold.
\end{definition}

\begin{remark}
  The condition \textbf{(P)} is rather restrictive. On the other hand,
  for a given word $\mfi$, it is not
  difficult to explicitly construct (either directly from the definition, or by using the
  other 
  equivalent characterizations of condition \textbf{(P)} in
  \cite[Proposition 2.1]{HaradaYang2014})
  many choices of
  $\mfm$ such that $(\mfi,\mfm)$ satisfies condition
  \textbf{(P)}. 
\end{remark}

We may now state the main result of this section. 

\begin{proposition}\label{proposition:bijection}
 If $(\mfi, \mfm)$ satisfies condition \textbf{(P)}, then there
 exists a bijection between the set of integer lattice points in the
 polytope $P(\mfi,\mfm)$ and the set of standard tableaux
 $\mc{T}(\mfi,\mfm)$, therefore,
\[
\lvert P(\mfi,\mfm) \cap \Z^n \rvert = \lvert \mc{T}(\mfi, \mfm) \rvert.
\]
\end{proposition}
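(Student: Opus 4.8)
The plan is to construct the bijection explicitly by sending a lattice point $(x_1,\dots,x_n) \in P(\mfi,\mfm)\cap\Z^n$ to the standard tableau obtained by reading the coordinates as the exponents of the lowering operators, i.e.
\[
(x_1,\dots,x_n) \longmapsto f_{\be_1}^{x_1}\bigl(\pi^{\la_1} \star f_{\be_2}^{x_2}(\pi^{\la_2} \star \cdots \star f_{\be_n}^{x_n}(\pi^{\la_n})\cdots)\bigr).
\]
The first step is to establish, by downward induction on $k$ from $n$ to $1$, a ``partial'' version of this assignment: for any tuple $(x_{k+1},\dots,x_n)$ satisfying the first $n-k$ inequalities of Definition~\ref{def:polytope}, the intermediate path $\eta_k := \pi^{\la_k} \star f_{\be_{k+1}}^{x_{k+1}}(\cdots f_{\be_n}^{x_n}(\pi^{\la_n})\cdots)$ is a genuine path (not $\mathbf{O}$), and the integer $P$-value (in the sense of Lemma~\ref{lem:Littelmann}(3)) associated to $\eta_k$ with respect to $\be_k$ is precisely $A_k(x_{k+1},\dots,x_n)$. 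The key computational input here is that, by Lemma~\ref{lemma:weights}, $\mr{wt}(\eta_k) = m_k\varpi_{\be_k} + \sum_{j>k}\mr{wt}(f_{\be_j}^{x_j}(\cdots)) = m_k\varpi_{\be_k} + \sum_{j>k}(m_j\varpi_{\be_j} - x_j\be_j)$, pairing which with $\be_k^\vee$ gives exactly $A_k$; one must also check that $h_{\be_k}$ attains its minimum $Q=0$ at the start (because $\pi^{\la_k}$ begins by moving in the $+\varpi_{\be_k}$ direction, so $h_{\be_k}$ is initially nonnegative), so that $P = \mr{wt}(\eta_k)$ paired with $\be_k^\vee$. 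This is the statement alluded to in Remark~\ref{remark:toric} and presumably packaged as Lemma~\ref{lemma:endpoints}.

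Granting that, condition \textbf{(P)} (specifically (P-k)) guarantees $A_k(x_{k+1},\dots,x_n)\ge 0$ whenever the earlier inequalities hold, so each $A_k$ really is the upper bound of a nonempty interval of admissible integer values for $x_k$; and Lemma~\ref{lem:Littelmann}(3) tells us that $f_{\be_k}^{x_k}(\eta_k)\ne\mathbf{O}$ if and only if $0\le x_k\le A_k$. Thus a lattice point of $P(\mfi,\mfm)$ produces, step by step, a well-defined nonzero path, which by Definition~\ref{definition:standard tableau} is a standard tableau in $\mc{T}(\mfi,\mfm)$; conversely, every standard tableau arises from \emph{some} choice of exponents $\ell_1,\dots,\ell_n$, and Lemma~\ref{lem:Littelmann}(2) (applicability of $e_{\be_j}$) lets us recover those exponents uniquely from the tableau, so the map is injective. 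For surjectivity one argues that any choice of $\ell_j$ yielding a nonzero path must satisfy $0\le\ell_k\le A_k(\ell_{k+1},\dots,\ell_n)$ for every $k$ (again by Lemma~\ref{lem:Littelmann}(3) applied to the intermediate paths), hence $(\ell_1,\dots,\ell_n)\in P(\mfi,\mfm)\cap\Z^n$. The cardinality statement is then immediate.

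The main obstacle I anticipate is the inductive bookkeeping in the first step: one must verify not merely the weight computation but also that the function $h_{\be_k}$ for the concatenated path $\eta_k$ behaves as claimed — in particular that no ``extra'' lowering steps are secretly available or forbidden because of the earlier concatenated pieces $f_{\be_j}^{x_j}(\cdots)$ dipping below $0$ in the $\be_k$-direction. Here the reducedness of $\mfi$ is \emph{not} needed (this proposition does not assume it), but one does need a careful tracking of how concatenation interacts with the $h_\alpha$ function and the defining recipe for $f_\alpha$ in Definition~\ref{definition:f_beta}; the cleanest route is probably to prove a standalone lemma (the one promised in Remark~\ref{remark:toric}) stating that for a path of the form $\pi^{m\varpi_{\be}} \star \rho$ with $\rho$ arbitrary, the $P$-value with respect to $\be$ equals $\langle m\varpi_{\be} + \mr{wt}(\rho), \be^\vee\rangle$ (using that $\pi^{m\varpi_\be}$ only increases $h_\be$, so the minimum of $h_\be$ on the concatenation is still $0$, attained at $t=0$), and then apply it repeatedly. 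Everything else — injectivity, surjectivity, the counting — is then essentially formal given Lemma~\ref{lem:Littelmann} and condition \textbf{(P)}.
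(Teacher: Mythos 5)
Your overall strategy --- sending $(x_1,\dots,x_n)$ to $f_{\be_1}^{x_1}(\pi^{\la_1}\star\cdots)$ and using Lemma~\ref{lem:Littelmann} to control well-definedness, injectivity and surjectivity --- is exactly the paper's, but there is a genuine gap at the step you dismiss as bookkeeping. You assert that for $\eta_k=\pi^{\la_k}\star\varphi_{k+1}(x_{k+1},\dots,x_n)$ the minimum $Q$ of $h_{\be_k}$ is $0$ ``because $\pi^{\la_k}$ begins by moving in the $+\varpi_{\be_k}$ direction,'' and your proposed standalone lemma claims this for $\pi^{m\varpi_{\be}}\star\rho$ with $\rho$ \emph{arbitrary}. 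That lemma is false: the first factor only controls $h_{\be}$ on the first half of the concatenation, while the tail $\rho$ --- here built from lowering operators $f_{\be_j}$ with $\be_j$ equal or adjacent to $\be_k$ --- can drive $h_{\be_k}$ of the tail below $-m_k$, hence $h_{\be_k}$ of $\eta_k$ below $0$, so $Q<0$ and the $P$-value strictly exceeds $A_k=\langle \mr{wt}(\eta_k),\be_k^\vee\rangle$. With $Q<0$ your surjectivity argument collapses (a nonzero $f_{\be_k}^{\ell_k}(\eta_k)$ only forces $\ell_k\le P$, not $\ell_k\le A_k$), and so does the recovery of exponents for injectivity (the number of times $e_{\be_k}$ can be applied to $f_{\be_k}^{x_k}(\eta_k)$ is $x_k-Q$, which equals $x_k$ only when $Q=0$).

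Establishing $Q=0$ for lattice points of $P(\mfi,\mfm)$ is precisely the paper's condition \textbf{(P')}, and the implication \textbf{(P)} $\Rightarrow$ \textbf{(P')} (Proposition~\ref{prop:P implies P'}) is the technical heart of the section: it is a real induction on $n$, with a case analysis on whether $\be_1$ recurs in the word, using the adjacency formula~\eqref{eq:Ak} for $A_k$, Lemma~\ref{lemma:concatenation linear comb}, and condition \textbf{(P)} in an essential way (e.g.\ to deduce $m_1\ge A_s(x_{s+1},\dots,x_n)$ at the first recurrence index $s$). The implication is not formal: condition \textbf{(P)} constrains only the \emph{endpoints} of the paths $\tau_k$, whereas what you need is control of the \emph{entire path}, and Remark~\ref{remark:P' does not imply P} shows the two conditions are genuinely different. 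The rest of your outline is sound, but this missing implication is where the real work lies.
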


To prove Proposition~\ref{proposition:bijection} we need some
preliminaries. 
Let $\mfi, \mfm$ be as above. 
For any $k$
with $1 \leq k \leq n$, we define the notation 
\[
\mfi[k] := (i_k, i_{k+1}, \ldots, i_n)
\quad 
\mfm[k] := (m_k, m_{k+1}, \ldots, m_n)
\]
so $\mfi[k]$ and $\mfm[k]$ are obtained from $\mfi$ and $\mfm$ by
deleting the left-most $k-1$ coordinates. The following lemma is immediate
from the inductive nature of the definitions of the polytopes $P(\mfi,\mfm)$
and of the condition \textbf{(P)}. 

\begin{lemma}\label{lemma:inductive P and condition P}
Let $\mfi = (i_1, \ldots, i_n) \in \{1,\ldots, r\}^n$ be a word and
$\mfm = (m_1, \ldots, m_n) \in \Z^n_{\geq 0}$ be a multiplicity
list.
 \begin{enumerate}
  \item Suppose $(x_1,\ldots,x_n) \in P(\mfi,\mfm)$. For any $k$ with $1 \leq k \leq n-1$, we have 
\[
(x_{k+1}, \ldots, x_n) \in P(\mfi[k+1], \mfm[k+1]).
\]
\item If $(\mfi, \mfm)$ satisfies condition \textbf{(P)}, then for any $k$ with $1 \leq k \leq n-1$ and any
 $(x_{k+1}, \ldots, x_n) \in P(\mfi[k+1],\mfm[k+1])$, the vector 
  $(0,\ldots,0, x_{k+1},\ldots,x_n)$ lies in $P(\mfi,\mfm)$, where
  $(0,\ldots,0, x_{k+1},\ldots,x_n)$ is the vector obtained by adding
  $k$ zeroes to the right. 
\item If $(\mfi, \mfm)$ satisfies condition \textbf{(P)}, then for any $k$
    with $1 \leq k \leq n$, the pair $(\mfi[k], \mfm[k])$ also satisfies
    condition \textbf{(P)}. 
\end{enumerate}
\end{lemma}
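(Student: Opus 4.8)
The plan is to prove the three assertions by directly unwinding the inductive definitions of $P(\mfi,\mfm)$ (Definition~\ref{def:polytope}) and of condition \textbf{(P)} (Definition~\ref{definition:conditionP}); parts (1) and (3) will be essentially formal, while part (2) requires a short downward induction. The key structural observation, used throughout, is that for $j \geq k$ the function $A_j(x_{j+1},\ldots,x_n)$ depends only on the data $(m_k,\ldots,m_n)$ and $(\be_k,\ldots,\be_n)$, so that the ``bottom'' inequalities of $P(\mfi,\mfm)$, the functions $A_j$, and the hypotheses appearing in conditions (P-$j$) for $j\geq k$ are literally unchanged when one passes from $(\mfi,\mfm)$ to $(\mfi[k],\mfm[k])$ (after the harmless reindexing of coordinates).

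For part (1): the last $n-k$ inequalities in Definition~\ref{def:polytope} defining $P(\mfi,\mfm)$ — namely $0 \leq x_n \leq A_n$, $\ldots$, $0\leq x_{k+1}\leq A_{k+1}(x_{k+2},\ldots,x_n)$ — coincide precisely with the full list of inequalities defining $P(\mfi[k+1],\mfm[k+1])$, by the observation above. Hence if $(x_1,\ldots,x_n)\in P(\mfi,\mfm)$, then $(x_{k+1},\ldots,x_n)$ satisfies all the defining inequalities of $P(\mfi[k+1],\mfm[k+1])$, proving the claim. Part (3) is equally direct: condition \textbf{(P)} for $(\mfi[k],\mfm[k])$ is, by definition, the conjunction of (P-$n$) (i.e. $m_n\geq 0$) together with conditions (P-$j$) for $k \leq j \leq n-1$; and by the observation above each of these statements is word-for-word a statement contained in condition \textbf{(P)} for $(\mfi,\mfm)$. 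So condition \textbf{(P)} for $(\mfi,\mfm)$ directly implies condition \textbf{(P)} for $(\mfi[k],\mfm[k])$.

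For part (2): fix $k$ with $1\leq k \leq n-1$ and $(x_{k+1},\ldots,x_n)\in P(\mfi[k+1],\mfm[k+1])$, and set $\mathbf{x}:=(0,\ldots,0,x_{k+1},\ldots,x_n)$ with $k$ leading zeros. We must verify all $n$ defining inequalities of $P(\mfi,\mfm)$ for $\mathbf{x}$. The bottom $n-k$ of them hold by the identification used in part (1) together with the hypothesis on $(x_{k+1},\ldots,x_n)$. For $1\leq j \leq k$ the inequality reads $0 \leq 0 \leq A_j(0,\ldots,0,x_{k+1},\ldots,x_n)$, whose left half is trivial, so it remains to show $A_j(0,\ldots,0,x_{k+1},\ldots,x_n)\geq 0$ for $j=k,k-1,\ldots,1$, which we do by downward induction on $j$. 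To invoke condition (P-$j$) on the truncated vector $(0,\ldots,0,x_{k+1},\ldots,x_n)$ (coordinates $j+1$ through $n$), one needs it to satisfy the first $n-j$ defining inequalities: those with index in $\{k+1,\ldots,n\}$ hold as just noted, and those with index $\ell\in\{j+1,\ldots,k\}$ read $0\leq 0 \leq A_\ell(0,\ldots,0,x_{k+1},\ldots,x_n)$, which are exactly the inequalities established at the earlier (larger-$\ell$) steps of the induction (the base case $\ell = k$ being $A_k(x_{k+1},\ldots,x_n)\geq 0$, which is condition (P-$k$) applied to $(x_{k+1},\ldots,x_n)$). Thus condition (P-$j$) applies and yields $A_j(0,\ldots,0,x_{k+1},\ldots,x_n)\geq 0$, completing the induction and the proof.

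The only genuine content lies in the downward induction in part (2), and even there the difficulty is purely organizational: the substance is a single invocation of condition \textbf{(P)} at each step. I expect the main obstacle to be notational bookkeeping — keeping careful track, at step $j$, of exactly which variables of $\mathbf{x}$ have been set to zero and which of the defining inequalities must be checked before condition (P-$j$) can be applied.
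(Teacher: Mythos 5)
Your proof is correct, and it follows exactly the route the paper intends: the paper offers no proof at all, asserting the lemma is ``immediate from the inductive nature of the definitions,'' and your argument is a careful unwinding of those definitions, with the one genuinely non-formal point (the downward induction needed to verify the hypotheses of condition (P-$j$) in part (2)) handled correctly.
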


To prove Proposition~\ref{proposition:bijection}, the plan is to 
first explicitly
construct a map from $P(\mfi, \mfm) \cap \Z^n$ to $\mc{T}(\mfi, \mfm)$
and then prove that it is a bijection. 
Actually it will be convenient to define a sequence of maps from
$\varphi_k: \Z^k_{\ge 0} \to \Pi \cap \{\mathbf{O}\}$; the map
$\varphi:=\varphi_1$ will be the desired bijection between
$P(\mfi,\mfm) \cap \Z^n$ with $\mc{T}(\mfi, \mfm)$.

\begin{definition}\label{definition:varphi_k}
Let $\mfi = (i_1, \ldots, i_n) \in \{1,\ldots, r\}^n$ be a word and
$\mfm = (m_1, \ldots, m_n) \in \Z^n_{\geq 0}$ be a multiplicity
list.
Let $k$ be an 
integer with $1 \leq k \leq n$. We define a map $\varphi_k: \Z^k_{\geq
  0} \to \Pi \cup \{\mathbf{O}\}$ associated to $\mfi$ and $\mfm$ by 
\begin{equation}\label{eq:varphi_k}
\varphi_k(x_k,\dots,x_n):=f_{\be_k}^{x_k}(\pi^{\la_k} \star
f_{\be_{k+1}}^{x_{k+1}}(\pi^{\la_{k+1}} \star \cdots  \star
f_{\be_n}^{x_n}(\pi^{\la_n})\cdots))
\end{equation}
where \(\la_k:=m_k\varpi_{\be_k}\) for $1 \leq k \leq n$. (Although
the map $\varphi_k$ depends on $\mfi$ and $\mfm$, for simplicity we
omit it from the notation.) 
\end{definition}
From the definition it is immediate that the $\varphi_k$ are related
to one another by the equation 
\[
\varphi_k(x_k, \ldots, x_n) = f_{\be_k}^{x_k}(\pi^{\la_k} \star
\varphi_{k+1}(x_{k+1},\ldots,x_n))
\]
for $1 \leq k<n$. It will be also useful to introduce the notation 
\begin{equation}\label{eq:definition tau_k} 
\tau_k(x_{k+1},\ldots,x_n) := \pi^{\la_k} \star \varphi_{k+1}(x_{k+1}, \ldots, x_n)
\end{equation}
for $1 \leq k <n$ and we set $\tau_n :=
\pi^{\la_n}$, from which it immediately follows that
\begin{equation}\label{eq:varphi in terms of tau}
\varphi_k(x_k, \ldots,x_n) = f_{\be_k}^{x_k}(\tau_k(x_{k+1},\ldots,x_n)).
\end{equation}

With this notation in place we can interpret the functions $A_k$
appearing in the definition of $P(\mfi,\mfm)$ naturally in terms of
paths. Recall that the endpoint $\pi(1)$ of a path $\pi \in \Pi$ is
called its \textbf{weight} and we denote it by $\mr{wt}(\pi):=\pi(1)$. 

\begin{lemma}\label{lemma:endpoints}
Let $(x_1, \ldots, x_n)\in \Z^n_{\geq 0}$ and let $k$ be an integer,
$0 \leq k \leq n-1$. If $\varphi_{k+1}(x_{k+1},\ldots,x_n) \neq
\mathbf{O}$ then 
\[
\mr{wt}(\varphi_{k+1}(x_{k+1},\ldots,x_n)) = m_{k+1}\varpi_{\beta_{k+1}} +
\cdots + m_n \varpi_{\beta_n} - x_{k+1} \beta_{k+1} - \cdots - x_n
\beta_n.
\]
Moreover, if in addition $k\geq 1$ then $\tau_k(x_{k+1},\ldots,x_n) \neq \mathbf{O}$ and 
\[
\mr{wt}(\tau_k(x_{k+1},\ldots,x_n)) = m_k \varpi_{\beta_k} + m_{k+1}\varpi_{\beta_{k+1}} +
\cdots + m_n \varpi_{\beta_n} - x_{k+1} \beta_{k+1} - \cdots - x_n
\beta_n
\]
so in particular 
\begin{equation}\label{eq:Ak as endpoints}
A_k(x_{k+1},\ldots,x_n) = \langle \mr{wt}(\tau_k(x_{k+1},\ldots,x_n)),
\beta_k^\vee \rangle. 
\end{equation}
\end{lemma}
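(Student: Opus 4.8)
The plan is to prove the three displayed formulas in order, using induction on the number of factors in the concatenation, i.e. a downward induction on the index $k$. The key tool is Lemma~\ref{lemma:weights}, which tells us how $\mr{wt}$ interacts with concatenation and with reflections, together with Lemma~\ref{lem:Littelmann}(1), which tells us that applying a single lowering operator $f_\alpha$ (when the result is not $\mathbf{O}$) changes the weight by $-\alpha$, hence applying $f_\alpha^{x}$ changes it by $-x\alpha$. So the strategy is to peel off one $f_{\be_k}^{x_k}(\pi^{\la_k}\star -)$ at a time and keep track of the endpoint.

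First I would establish the formula for $\mr{wt}(\varphi_{k+1}(x_{k+1},\ldots,x_n))$ by downward induction on $k+1$, starting from the base case $\varphi_n(x_n) = f_{\be_n}^{x_n}(\pi^{\la_n})$: here $\mr{wt}(\pi^{\la_n}) = \la_n = m_n\varpi_{\be_n}$ by definition of the straight-line path, and applying $f_{\be_n}^{x_n}$ (assumed $\neq \mathbf{O}$) subtracts $x_n\be_n$ by Lemma~\ref{lem:Littelmann}(1), giving $m_n\varpi_{\be_n} - x_n\be_n$ as claimed. For the inductive step, write $\varphi_{k+1}(x_{k+1},\ldots,x_n) = f_{\be_{k+1}}^{x_{k+1}}(\pi^{\la_{k+1}} \star \varphi_{k+2}(x_{k+2},\ldots,x_n))$; since this is assumed not to be $\mathbf{O}$, neither is $\varphi_{k+2}(x_{k+2},\ldots,x_n)$ (a lowering operator applied to $\mathbf{O}$ or to a concatenation involving $\mathbf{O}$ stays $\mathbf{O}$, by the convention $\pi\star\mathbf{O}=\pi$ and $f_\alpha(\mathbf{O})=\mathbf{O}$), so the inductive hypothesis applies to $\varphi_{k+2}$. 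Then $\mr{wt}(\pi^{\la_{k+1}} \star \varphi_{k+2}(\ldots)) = \la_{k+1} + \mr{wt}(\varphi_{k+2}(\ldots))$ by Lemma~\ref{lemma:weights}, and applying $f_{\be_{k+1}}^{x_{k+1}}$ subtracts $x_{k+1}\be_{k+1}$; substituting the inductive formula for $\mr{wt}(\varphi_{k+2})$ yields exactly the asserted expression.

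Next, for the statement about $\tau_k$: by definition $\tau_k(x_{k+1},\ldots,x_n) = \pi^{\la_k}\star\varphi_{k+1}(x_{k+1},\ldots,x_n)$. I need that this is not $\mathbf{O}$ — but $\pi^{\la_k}$ is a genuine path in $\Pi$, and concatenation of a path with anything in $\Pi\cup\{\mathbf{O}\}$ (using $\pi\star\mathbf{O}=\pi$) is again a genuine path, so $\tau_k\neq\mathbf{O}$ automatically, regardless of whether $\varphi_{k+1}=\mathbf{O}$. Its weight is then $\la_k + \mr{wt}(\varphi_{k+1}(\ldots)) = m_k\varpi_{\be_k} + (m_{k+1}\varpi_{\be_{k+1}}+\cdots+m_n\varpi_{\be_n} - x_{k+1}\be_{k+1}-\cdots-x_n\be_n)$ by Lemma~\ref{lemma:weights} and the formula just proved. (One subtlety: the $\tau_k$ formula as stated presumes $\varphi_{k+1}\neq\mathbf{O}$, since otherwise $\mr{wt}(\varphi_{k+1})$ is undefined; this should be read under that hypothesis, which is harmless for the application.) Finally, \eqref{eq:Ak as endpoints} follows by simply pairing the computed weight of $\tau_k$ with $\be_k^\vee$ and comparing term by term with the definition of $A_k(x_{k+1},\ldots,x_n)$ in Definition~\ref{def:polytope}: the two expressions are literally the same linear functional evaluated on the same vector.

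I do not expect any real obstacle here; the proof is essentially bookkeeping. The only point requiring a little care is the handling of the symbol $\mathbf{O}$ and the hypotheses under which each weight is defined — making sure that ``$\varphi_{k+1}\neq\mathbf{O}$'' propagates correctly down the chain of subpaths (it does, by Lemma~\ref{lem:Littelmann}(3) and the $\mathbf{O}$-conventions) and that the $\tau_k$ claim is correctly understood as conditional on $\varphi_{k+1}\neq\mathbf{O}$ when one wants to speak of its weight, even though $\tau_k$ itself is never $\mathbf{O}$.
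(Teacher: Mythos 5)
Your argument is the same as the paper's: the paper's proof is a one-line appeal to the definition of $\varphi_k$, Lemma~\ref{lemma:weights}, and Lemma~\ref{lem:Littelmann}(1), and your downward induction on $k$ is exactly the bookkeeping that fills this in, including the correct observation that $\tau_k$ is automatically an honest path because it is a concatenation headed by $\pi^{\la_k}$.

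One step, however, is wrong as justified. You claim that $\varphi_{k+1}(x_{k+1},\ldots,x_n)\neq\mathbf{O}$ forces $\varphi_{k+2}(x_{k+2},\ldots,x_n)\neq\mathbf{O}$ ``by the convention $\pi\star\mathbf{O}=\pi$ and $f_\alpha(\mathbf{O})=\mathbf{O}$,'' and later that the non-$\mathbf{O}$ hypothesis ``propagates correctly down the chain.'' With the paper's convention $\pi\star\mathbf{O}:=\pi$ the implication goes the other way: if $\varphi_{k+2}=\mathbf{O}$, then $\pi^{\la_{k+1}}\star\varphi_{k+2}$ collapses to the honest path $\pi^{\la_{k+1}}$, and $f_{\be_{k+1}}^{x_{k+1}}(\pi^{\la_{k+1}})$ is again an honest path whenever $x_{k+1}\leq m_{k+1}$. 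So $\varphi_{k+1}\neq\mathbf{O}$ does \emph{not} imply $\varphi_{k+2}\neq\mathbf{O}$, and in that degenerate situation the asserted weight formula genuinely fails (the terms $m_j\varpi_{\be_j}-x_j\be_j$ for $j\geq k+2$ disappear). This imprecision is already present in the statement of the lemma itself, so you are in good company, but your induction needs the stronger hypothesis that \emph{all} of $\varphi_n,\varphi_{n-1},\ldots,\varphi_{k+1}$ are honest paths; you should either build that into the hypothesis or note that in every application $(x_{k+1},\ldots,x_n)$ is a lattice point of $P(\mfi[k+1],\mfm[k+1])$, for which the subsequent lemma shows that every intermediate $\varphi_j$ is an honest path, so the propagation holds for the points that actually matter.
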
 

\begin{proof}
  Under the hypothesis that $\varphi_{k+1}(x_{k+1},\ldots,x_n)$ is an
  honest path (i.e. it is not $\mathbf{O}$), the first statement of
  the lemma is immediate from the definition of $\varphi_k$, Lemma~\ref{lemma:weights}, and
  Lemma~\ref{lem:Littelmann}(1). 
  The other statements of the lemma are then straightforward from the definitions.
\end{proof}

In words, the equation~\eqref{eq:Ak as endpoints} says that the functions $A_k$ measure the pairing of the endpoint of
$\tau_k(x_{k+1},\ldots,x_n)$ against the coroot
$\beta_k^\vee$ (assuming $\tau_k(x_{k+1},\ldots,x_n)$ is an honest
path).

Now we show that when $\varphi_k$ is restricted to the
subset $P(\mfi[k],\mfm[k]) \cap \Z^{n-k+1}$, the output is an honest
path in $\Pi$  (i.e. it is not the formal symbol $\mathbf{O}$). From the definition of standard tableaux it immediately follows that
the output is also in fact an element in $\mc{T}(\mfi[k],\mfm[k])$. 

\begin{lemma}
Let $k$ be an integer with $1 \leq k \leq n$. The map $\varphi_k$
restricts to a map \[\varphi_k: P(\mfi[k],\mfm[k]) \cap \Z^{n-k+1} \to
\mc{T}(\mfi, \mfm).\]
\end{lemma}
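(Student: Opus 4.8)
The plan is to argue by downward induction on $k$. The substance of the statement is that for every $(x_k,\ldots,x_n) \in P(\mfi[k],\mfm[k]) \cap \Z^{n-k+1}$ the path $\varphi_k(x_k,\ldots,x_n)$ is an \emph{honest} path, i.e.\ it is not the formal symbol $\mathbf{O}$; once this is known, formula~\eqref{eq:varphi_k} exhibits $\varphi_k(x_k,\ldots,x_n)$ in exactly the shape required by Definition~\ref{definition:standard tableau}, so $\varphi_k(x_k,\ldots,x_n) \in \mc{T}(\mfi[k],\mfm[k])$ (which for $k=1$ is the asserted target $\mc{T}(\mfi,\mfm)$).

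For the base case $k=n$, the path $\pi^{\la_n}$ is the straight-line path to $m_n\varpi_{\be_n}$, and since $\lee\varpi_{\be_n},\be_n^\vee\ree = 1$ the function $t \mapsto \lee\pi^{\la_n}(t),\be_n^\vee\ree$ equals $t\,m_n$; hence its minimal integer value is $Q=0$ and the integer $P$ of Definition~\ref{definition:f_beta} is the integral part of $m_n - 0 = m_n$, that is, $P = m_n = A_n$. Membership in $P(\mfi[n],\mfm[n])$ forces $0 \le x_n \le A_n = P$, so Lemma~\ref{lem:Littelmann}(3) gives $\varphi_n(x_n) = f_{\be_n}^{x_n}(\pi^{\la_n}) \neq \mathbf{O}$.

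For the inductive step, assume the claim for $k+1$ and take $(x_k,\ldots,x_n) \in P(\mfi[k],\mfm[k]) \cap \Z^{n-k+1}$. The inequalities of $P(\mfi[k],\mfm[k])$ involving only $x_{k+1},\ldots,x_n$ are literally the defining inequalities of $P(\mfi[k+1],\mfm[k+1])$, so $(x_{k+1},\ldots,x_n) \in P(\mfi[k+1],\mfm[k+1])$ and, by the inductive hypothesis, $\varphi_{k+1}(x_{k+1},\ldots,x_n) \neq \mathbf{O}$. Then $\tau_k(x_{k+1},\ldots,x_n) = \pi^{\la_k} \star \varphi_{k+1}(x_{k+1},\ldots,x_n)$ is an honest path, so Lemma~\ref{lemma:endpoints} — specifically~\eqref{eq:Ak as endpoints} — applies: setting $h(t) := \lee \tau_k(x_{k+1},\ldots,x_n)(t), \be_k^\vee\ree$ we have $h(1) = A_k(x_{k+1},\ldots,x_n)$. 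Recall that the integer $Q$ of Definition~\ref{definition:f_beta} is always $\le 0$; denoting by $P$ the corresponding integer for this path and $\be_k$, we have $P = \lfloor h(1) - Q\rfloor$. Because $x_k$ is a non-negative integer with $x_k \le A_k(x_{k+1},\ldots,x_n) = h(1)$ and because $-Q \ge 0$, we conclude $x_k \le \lfloor h(1)\rfloor \le \lfloor h(1)-Q\rfloor = P$; hence $\varphi_k(x_k,\ldots,x_n) = f_{\be_k}^{x_k}(\tau_k(x_{k+1},\ldots,x_n)) \neq \mathbf{O}$ by Lemma~\ref{lem:Littelmann}(3), which completes the induction.

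The routine parts are the explicit computation of $Q$ and $P$ for $\pi^{\la_n}$ and the bookkeeping relating $\varphi_k$, $\tau_k$ and the $A_k$. The one load-bearing point — and the thing to be careful about — is that the polytope's upper bound $A_k$ is \emph{exactly} the coroot pairing $\lee \mr{wt}(\tau_k), \be_k^\vee\ree$ of the endpoint of $\tau_k$ (Lemma~\ref{lemma:endpoints}), so that, combined with $Q \le 0$ and the integrality of $x_k$, the defining inequality $x_k \le A_k$ of $P(\mfi[k],\mfm[k])$ translates precisely into the inequality $x_k \le P$ that, by Lemma~\ref{lem:Littelmann}(3), prevents $f_{\be_k}^{x_k}$ from annihilating the path. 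Note that condition \textbf{(P)} plays no role here: all that is used is that $(x_k,\ldots,x_n)$ actually lies in $P(\mfi[k],\mfm[k])$, which already encodes $0 \le x_k \le A_k$ as well as the analogous constraints on the remaining coordinates.
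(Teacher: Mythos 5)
Your proposal is correct and follows essentially the same route as the paper: downward induction on $k$, computing $Q=0$ and $P=m_n$ for the straight-line base case, and in the inductive step using Lemma~\ref{lemma:endpoints} to identify the polytope bound $A_k$ with $\langle \mr{wt}(\tau_k),\beta_k^\vee\rangle$ together with $Q\le 0$ and Lemma~\ref{lem:Littelmann}(3) to conclude $f_{\be_k}^{x_k}(\tau_k)\neq\mathbf{O}$. Your explicit handling of the integral part of $h(1)-Q$ and your remark that the codomain should be read as $\mc{T}(\mfi[k],\mfm[k])$ are minor refinements of the paper's argument, not a different approach.
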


\begin{proof}
We first show that the output of the maps $\varphi_k$ are honest paths (i.e. $\neq \mathbf{O}$). 
We argue by induction, and since the definition of the $\varphi_k$ is a
composition of operators starting with $f_{\be_n}$ (not $f_{\be_1}$)
the base case is $k=n$. From the definition of $P(\mfi,\mfm)$
we know that $x_n\le m_n=\lee \pi^{\la_n}(1),\be_n^{\vee}\ree$, so it
suffices to prove that for such $x_n$, we have
$f_{\be_n}^{x_n}(\pi^{\la_n} = \pi^{m_n \varpi_{\beta_n}}) \neq
\mathbf{O}$. Since $\pi^{\la_n}$ is a straight-line path from $0$ to
$\la_n =m_n \varpi_{\beta_n}$, the constants $Q$ and $P$ in the definition of
$f_{\beta_n}$ (applied to $\pi^{\la_n}$) are $0$ and $h_{\be_n}(1) - Q
= \langle m_n \varpi_{\beta_n}, \beta^\vee_n \rangle = m_n$
respectively. Thus by 
Lemma ~\ref{lem:Littelmann}(3), we may conclude 
$\varphi_n(x_n):=f_{\be_n}^{x_n}(\pi^{\la_n})\ne \mathbf{O}$, which 
completes the base case. Now suppose 
that $1 \leq k < n$ and $\varphi_{k+1}(x_{k+1},\ldots,x_n) \neq \mathbf{O}$, which in
turn implies $\tau_{k}(x_{k+1},\ldots,x_n) \neq \mathbf{O}$ since concatenation of paths always results in a path. 
We must show that 
$\varphi_k(x_k, \ldots, x_n) = f_{\be_k}^{x_k}(\tau_k) \neq \mathbf{O}$. Since $\tau_{k}$ 
is a path starting at the origin $0$, the constants 
$Q$ and $P$ in the definition of $f_{\beta_1}$ (applied to $\tau_k(x_{k+1},\ldots,x_n)$) are $\leq
0$ and $\geq \langle \mr{wt}(\tau_k(x_{k+1},\ldots,x_n)), \beta_k^\vee \rangle$ respectively. In
particular, again by Lemma~\ref{lem:Littelmann}(3) it suffices to show
that $x_k \leq \langle \mr{wt}(\tau_k(x_{k+1},\ldots,x_n)),
\beta_k^\vee \rangle$. Since $\tau_k(x_{k+1},\ldots,x_n) \neq
\mathbf{O}$ and $(x_k, \ldots, x_n) \in P(\mfi[k], \mfm[k])$, the result
then holds by definition of $P(\mfi[k],\mfm[k])$ and the
interpretation of the $A_k$ given in
Lemma~\ref{lemma:endpoints}.
It remains to check that the paths
$\varphi_k(x_{k+1},\ldots,x_n) \in \Pi$ are standard tableaux, but this
follows directly from Definition~\ref{definition:standard tableau}.

 \end{proof}

From the above discussion we have a well-defined map 
\begin{equation}\label{eq:def varphi} 
\varphi := \varphi_1: P(\mfi,\mfm) \cap \Z^n \to \mc{T}(\mfi,\mfm).
\end{equation}
We need to prove that 
$\varphi$ is a bijection. For this it is useful to introduce another
condition on $(\mfi,\mfm)$ which we call condition \textbf{(P')}; it
is formulated in terms of the paths $\tau_k$ and the raising operators
$e_{\beta_k}$.

\begin{definition}
Let $\mfi = (i_1, \ldots, i_n) \in \{1,\ldots,r\}^n$ be a word and
$\mfm = (m_1, \ldots, m_n) \in \Z^n_{\geq 0}$ be a multiplicity
list. We say that the pair $(\mfi,\mfm)$ \textbf{satisfies condition
  \textbf{(P')}} if for all $(x_1, \ldots, x_n) \in P(\mfi, \mfm) \cap
\Z^n_{\geq 0}$ and
all $k$ with $1 \leq k \leq n$, we have $e_{\beta_k}(\tau_k(x_{k+1},\ldots,x_n)) = \mathbf{O}$. 
\end{definition}

It may be conceptually helpful to note that, from our interpretation
of the functions $A_k$ in Lemma~\ref{lemma:endpoints} and the
definitions of $P(\mfi,\mfm)$ and $\tau_k$, we may think of condition 
\textbf{(P)} as saying that the \emph{endpoints} of certain paths $\tau_k$ are always
contained in the affine half-space defined by $\{\langle \cdot, \beta_k
\rangle \geq 0\}$ (i.e. the half-space pairing non-negatively against
the coroot $\beta^\vee_k$). Moreover, from
Lemma~\ref{lem:Littelmann}(3) we see that in order to show
$e_{\beta_k}(\tau_k)=\mathbf{O}$ for a given path $\tau_k$, it suffices to show that the
\emph{entire path}
$\tau_k$ lies in the same affine half-space. Thus, roughly speaking,
condition \textbf{(P)} is about endpoints, whereas condition
\textbf{(P')} is about the entire path. 

\begin{remark}\label{remark:P' does not imply P}
From the above discussion it may seem that condition \textbf{(P')} is
stronger than condition \textbf{(P)}. This is not the case. 
For instance, for $G=SL(3,\C)$, the
pair $\mfi=(1,2,1)$ and $\mfm=(0,1,1)$ is an example where $(\mfi,
\mfm)$ satisfies condition \textbf{(P')} but it does not satisfy
condition \textbf{(P)}; for an illustration of the corresponding
polytope $P(\mfi,\mfm)$ see Example~\ref{example:nonexample}. The subtlety is in the quantifiers in the
definitions of the conditions \textbf{(P)} and \textbf{(P')}. 
\end{remark}

\begin{proposition}\label{prop:P implies P'}
Let $\mfi = (i_1, \ldots, i_n) \in \{1,\ldots,r\}^n$ be a word and
$\mfm = (m_1, \ldots, m_n) \in \Z^n_{\geq 0}$ be a multiplicity
list.
If the pair 
$(\mfi, \mfm)$ 
satisfies condition \textbf{(P)} then $(\mfi, \mfm)$ satisfies
condition \textbf{(P')}. 
\end{proposition}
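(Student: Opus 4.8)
The plan is to prove condition \textbf{(P')} by downward induction on $k$, tracking not just the endpoints of the paths $\tau_k$ but their entire trajectories. Fix $(x_1,\ldots,x_n) \in P(\mfi,\mfm) \cap \Z^n_{\geq 0}$ and fix $k$ with $1 \leq k \leq n$. By Lemma~\ref{lem:Littelmann}(3), to show $e_{\beta_k}(\tau_k(x_{k+1},\ldots,x_n)) = \mathbf{O}$ it suffices to show that the constant $Q$ associated to $\tau_k$ (relative to $\beta_k$) equals $0$, i.e. that the function $h_{\beta_k}(t) = \langle \tau_k(t), \beta_k^\vee \rangle$ satisfies $h_{\beta_k}(t) > -1$ for all $t \in [0,1]$ --- equivalently, that the entire path $\tau_k$ lies in the half-space $\{\langle \cdot, \beta_k^\vee \rangle \geq 0\}$ (since $h_{\beta_k}$ takes integer values whenever it attains an integer, and $h_{\beta_k}(0)=0$). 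So the real content is a statement about where the paths $\tau_k$ live, not merely about their endpoints.

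First I would set up the inductive claim carefully. Using Lemma~\ref{lemma:inductive P and condition P}, the tuple $(x_{k+1},\ldots,x_n)$ lies in $P(\mfi[k+1],\mfm[k+1])$ and the pair $(\mfi[k],\mfm[k])$ again satisfies condition \textbf{(P)}, so it is enough to prove the $k=1$ case for every pair satisfying \textbf{(P)}; equivalently, I will prove by downward induction on $k$ the statement: \emph{the path $\tau_k(x_{k+1},\ldots,x_n)$ lies entirely in the half-space $H_k := \{v \in X_\R : \langle v, \beta_k^\vee \rangle \geq 0\}$, and more usefully, I will also record the stronger fact needed to propagate the induction.} The base case $k=n$ is $\tau_n = \pi^{\lambda_n} = \pi^{m_n\varpi_{\beta_n}}$, a straight-line path whose pairing with $\beta_n^\vee$ is $t \mapsto t m_n \geq 0$ by (P-n); this lies in $H_n$.

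For the inductive step, recall $\tau_k(x_{k+1},\ldots,x_n) = \pi^{\lambda_k} \star \varphi_{k+1}(x_{k+1},\ldots,x_n)$ where $\varphi_{k+1} = f_{\beta_{k+1}}^{x_{k+1}}(\tau_{k+1}(x_{k+2},\ldots,x_n))$. The key point is that applying a lowering operator $f_\beta$ can only \emph{decrease} the minimum value of the pairing $h_\beta$ with its own coroot $\beta^\vee$ (Lemma~\ref{lem:Littelmann}(3) controls this precisely via $P$), but I need information about the pairing against the \emph{different} coroot $\beta_k^\vee$. Here the half-plane phrasing interacts with the concatenation: I would split $\tau_k$ into the segment $\pi^{\lambda_k}$ (pairing $t \mapsto t\langle m_k\varpi_{\beta_k},\beta_k^\vee\rangle = tm_k \geq 0$, which stays in $H_k$ since $m_k = A_k(0,\ldots,0) \geq 0$ by (P-k) applied at the origin) followed by a translated copy of $\varphi_{k+1}$; and I must bound $\langle \varphi_{k+1}(t), \beta_k^\vee\rangle$ from below over all $t$, knowing $\langle \mathrm{wt}(\varphi_{k+1}), \beta_k^\vee \rangle = A_k(x_{k+1},\ldots,x_n) - m_k \geq -m_k$ by Lemma~\ref{lemma:endpoints} and (P-k), and knowing inductively that $\tau_{k+1}$ lives in $H_{k+1}$. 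The honest mechanism is: condition (P-k) guarantees the \emph{endpoint} of $\tau_k$ is in $H_k$; I then need that no intermediate point of $\tau_k$ exits $H_k$ by more than $1$, which I extract by applying condition (P) not just to $(x_{k+1},\ldots,x_n)$ but to all ``prefixes'' of the lowering operations --- i.e. observing that each intermediate path occurring in the computation of $f_{\beta_j}^{x_j}(\cdots)$ is itself $\tau_j(\cdots)$ or an $e_{\beta_j}$-image thereof, to which the inductive hypothesis or condition (P-j) applies.

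The main obstacle --- and the step I expect to require the most care --- is precisely controlling the \emph{interior} behavior of $\tau_k$ against $\beta_k^\vee$ rather than just its endpoint, since condition \textbf{(P)} is literally only a statement about endpoints (as the authors emphasize in the paragraph before Remark~\ref{remark:P' does not imply P}). The resolution I would pursue is to show that \emph{every} path that appears as an intermediate stage in the recursive definition of $\varphi_1$ is, up to the action of the operators, of the form $\tau_j$ for the appropriate sub-word/sub-list, and that condition \textbf{(P)} for $(\mfi[j],\mfm[j])$ --- which we have for all $j$ by Lemma~\ref{lemma:inductive P and condition P}(3) --- pins down the endpoint of each such $\tau_j$ inside $H_j$; then a short argument using Lemma~\ref{lem:Littelmann}(3) (the characterization of when $e_\beta$ kills a path) and the fact that $h_{\beta_k}$ starts at $0$ converts ``endpoint in $H_k$ for all relevant sub-configurations'' into ``$e_{\beta_k}(\tau_k) = \mathbf{O}$.'' I would finish by remarking that the case $(\mfi,\mfm)=((1,2,1),(0,1,1))$ of Remark~\ref{remark:P' does not imply P} shows the converse fails, so no shortcut through a biconditional is available.
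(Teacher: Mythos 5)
Your setup matches the paper's: reduce to showing $\min_{t}\langle \tau_k(t),\beta_k^\vee\rangle \ge 0$ (equivalently $\min_t\langle\varphi_{k+1}(t),\beta_k^\vee\rangle\ge -m_k$), dispose of the base case $\tau_n=\pi^{\lambda_n}$, and use Lemma~\ref{lemma:inductive P and condition P}(3) to reduce everything to the case $k=1$ for a pair satisfying \textbf{(P)}. You have also correctly diagnosed the crux: condition \textbf{(P)} constrains only \emph{endpoints}, while \textbf{(P')} needs the \emph{whole path} to stay in the half-space $\{\langle\cdot,\beta_1^\vee\rangle\ge 0\}$. But your proposal stops exactly where the proof has to start. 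The sentence promising that ``a short argument using Lemma~\ref{lem:Littelmann}(3) converts endpoint-in-$H_j$ for all sub-configurations into $e_{\beta_1}(\tau_1)=\mathbf{O}$'' is the entire content of the proposition, and as stated it cannot go through: neither your downward inductive hypothesis (which controls $\tau_j$ against $\beta_j^\vee$) nor condition (P-$j$) (which controls the \emph{endpoint} of $\tau_j$ against $\beta_j^\vee$) says anything about the pairing of $\varphi_2$ against the \emph{different} coroot $\beta_1^\vee$, which is the quantity you must bound below by $-m_1$.

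The mechanism that is missing has three ingredients. First, split according to whether $\beta_1$ recurs in $(\beta_2,\dots,\beta_n)$; if not, Lemma~\ref{lemma:concatenation linear comb} shows every point of $\varphi_2$ pairs nonnegatively with $\beta_1^\vee$ and you are done. Second, if $s$ is the first index with $\beta_s=\beta_1$, the induction (the paper runs it on word length, which is equivalent to your reduction) gives $\min_t\langle\tau_s(t),\beta_s^\vee\rangle\ge 0$, and since each application of $f_{\beta_s}$ lowers this minimum by exactly $1$, one gets $\min_t\langle\varphi_s(t),\beta_1^\vee\rangle\ge -x_s$; the intermediate letters $\beta_2,\dots,\beta_{s-1}$, being distinct from $\beta_1$, cannot worsen this. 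Third --- and this is the step for which your plan offers no substitute --- one must prove $x_s\le m_1$. This does \emph{not} follow from applying \textbf{(P)} to the given point $(x_1,\dots,x_n)$, nor to any ``prefix'' of the lowering operations; it requires evaluating $A_1$ at the auxiliary lattice point $(0,\dots,0,y_s,x_{s+1},\dots,x_n)$ with $y_s=A_s(x_{s+1},\dots,x_n)$ taken \emph{maximal} (this point lies in the polytope by Lemma~\ref{lemma:inductive P and condition P}(2)), and using the expansion \eqref{eq:Ak} to get the telescoping identity $A_1=m_1-A_s(x_{s+1},\dots,x_n)\ge 0$, whence $x_s\le A_s(x_{s+1},\dots,x_n)\le m_1$. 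Without this extremal-point evaluation there is no way to relate the depth $x_s$ of the dip of $\varphi_s$ against $\beta_1^\vee$ to the budget $m_1$, so the proposal as written has a genuine gap at its central step.
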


Since condition \textbf{(P')} is phrased in terms of the $e_{\be_k}$
and because the raising and lowering operators acts as inverses
(provided the composition makes sense) as in
Lemma~\ref{lem:Littelmann}(2), once we know
Proposition~\ref{prop:P implies P'} it is straightforward to show that
$\varphi$ is a bijection. Indeed, we suspect that the argument given
below is standard for the experts, but we include it for
completeness.

\begin{proof}[Proof of Proposition~\ref{proposition:bijection} (assuming Proposition~\ref{prop:P implies P'})]

By Proposition~\ref{prop:P implies P'} we may assume that condition \textbf{(P')} holds. 
First we prove by induction that $\varphi_k$ is injective for each
$k$, starting with the base case $k=n$. Suppose
\begin{equation}\label{eq:xn yn}
\varphi_n(x_n) = f^{x_n}_{\beta_n}(\pi^{\lambda_n}) =
f^{y_n}_{\beta_n}(\pi^{\lambda_n})=\varphi_n(y_n)
\end{equation}
and also suppose for a
contradiction that $x_n<y_n$. Applying $e_{\beta_n}^{x_n+1}$ to the
LHS of~\eqref{eq:xn yn} yields $e_{\beta_n}(\pi^{\lambda_n})$ since by
Lemma~\ref{lem:Littelmann}(2) we know $e_{\beta_n}$ is
inverse to $f_{\beta_n}$ whenever the image of $f_{\beta_n}$ is $\neq \mathbf{O}$.
By condition \textbf{(P')}, $e_{\beta_n}(\pi^{\lambda_n}) =
e_{\beta_n}(\tau_n) = \mathbf{O}$. On the other hand, applying
$e_{\beta_n}^{x_n+1}$ to the RHS of~\eqref{eq:xn yn} yields
$f_{\beta_n}^{y_n-x_n-1}(\pi^{\lambda_n})$ which is $\neq \mathbf{O}$
since $y_n-x_n-1\geq 0$ by assumption. This contradicts~\eqref{eq:xn
  yn} and so $x_n=y_n$ and we conclude $\varphi_n$ is injective. This
completes the base case. Now
suppose by induction that $\varphi_{k+1}$ is injective; we need to show $\varphi_k$
is injective. Assume 
\[
\varphi_k(x_k, \ldots, x_n) =
f^{x_k}_{\beta_k}(\tau_k(x_{k+1},\ldots,x_n)) =
f^{y_k}_{\beta_k}(\tau_k(y_{k+1},\ldots,y_n)) = \varphi_k(y_k, \ldots,
y_n).
\]
and suppose also that $x_k<y_k$. The same argument
as above, namely applying $e_{\beta_k}^{x_k+1}$ to both sides,
yields a contradiction due to the condition \textbf{(P')}. Thus
$x_k=y_k$. Applying $e_{\beta_k}^{x_k}$ to both sides of the equation
above we obtain $\tau_k(x_{k+1},\ldots, x_n) =
\tau_k(y_{k+1},\ldots,y_n)$. Concatenation by $\pi^{\lambda_k}$ is
evidently injective, so
$\varphi_{k+1}(x_{k+1},\ldots,x_n)=\varphi_{k+1}(y_{k+1},\ldots,y_n)$,
but then by the inductive assumption we have
$(x_{k+1},\ldots,x_n)=(y_{k+1},\ldots,y_n)$. This proves
$(x_k,\ldots,x_n)=(y_k,\ldots,y_n)$ and hence that $\varphi_k$ is
injective as desired.

Now we claim $\varphi_k$ is surjective for each $k$. 
We argue by induction on the size of $n$. First
  consider the base case $n=1$, so $w = (\beta_1 = \beta)$,
  $m=(m_1=m)$, and $P(w,m) = [0,m]$. 
 By definition, a standard tableau of shape $\la=m\varpi_{\be}$ is
  of the form $f_{\be}^x(\pi^{\la})$ for some $x\in\Z_{\ge 0}$. Since
  $\pi^{\la}$ is a straight-line path from $0$ to $m \beta$, the constants
  $Q$ and $P$ in the definition of $f_\be$ applied to $\pi^\la$ are $0$ and $m$
 respectively. Then for $x$ a non-negative integer we know by
 Lemma~\ref{lem:Littelmann}(3) that 
 $f^x_\be(\pi^{\la}) \neq \mathbf{O}$ 
 if and only if $x\le m$. 
 Since $P(\mfi,\mfm)=[0,m]$ in this
 case, we conclude that $\varphi_1$ is surjective if $n=1$, as desired.

Now assume by induction that each $\varphi_k$ is surjective (hence
bijective) for words of length $<n$. From Lemma~\ref{lemma:inductive P and
  condition P}(3) we know that $(\mfi[k],\mfm[k])$ satisfies condition
\textbf{(P)} (and hence condition \textbf{(P')}). By the inductive
assumption we may therefore assume 
that $\varphi_k: P(\mfi[k],\mfm[k]) \cap
\Z^{n-k+1} \to \mc{T}(\mfi[k],\mfm[k])$ is a bijection for $k>1$ and we
wish to show $\varphi=\varphi_1$ is surjective. 
By definition of the standard tableaux,
any element in $\mc{T}(\mfi,\mfm)$ is of the
form $f_{\beta_1}^{\ell_1}(\pi^{\la_1} \star \tau')$
for some $\tau' \in \mc{T}(w[2], m[2])$ and some $\ell_1 \in \Z_{\geq
  0}$. By the inductive assumption, we know that there exists some
$(x_2, \ldots, x_n) \in P(\mfi[2], \mfm[2])$ such that $\tau' = \varphi_2(x_2,
\ldots, x_n)$. From the definition of $P(\mfi,\mfm)$, in 
order to prove the surjectivity it would suffice
to show that 
\[
f_{\beta_1}^{\ell_1}(\pi^{m_1 \varpi_{\beta_1}} \star \varphi_2(x_2, \ldots,
x_n)) = f_{\beta_1}(\tau_1(x_2,\ldots,x_n)) \neq \mathbf{O} \Rightarrow \ell_1 \leq A_1(x_2, \ldots, x_n).
\]
From Lemma~\ref{lem:Littelmann}(3) we know 
$f_{\beta_1}^{\ell_1}(\tau_1) \neq \mathbf{O} \Leftrightarrow \ell_1
\leq P$
where $P$ is defined to be the integral part of $\langle
\mr{wt}(\tau_1(x_2,\ldots,x_n)), \beta_1^\vee \rangle - Q$ and 
\(Q =  
\min_{t \in [0,1]} \langle
\tau_1(x_2,\ldots,x_n)(t),\beta_1^\vee \rangle.\) 
Since
$\tau_1(x_2,\ldots,x_n) \neq \mathbf{O}$ by assumption,
we know from~\eqref{eq:Ak as endpoints} that $A_1(x_2,\ldots,x_n)=\langle
\mr{wt}(\tau_1(x_2,\ldots,x_n)), \beta_1^\vee \rangle$ and it is
evident from the definition of $A_1$ that for $(x_2,\ldots,x_n) \in
\Z^{n-1}$, the value $A_1(x_2, \ldots,x_n)$ is integral. Hence 
it suffices to show 
that $Q=0$, and again from Lemma~\ref{lem:Littelmann}(3) 
this is equivalent to showing that
$e_{\beta_1}(\tau_k(x_2,\ldots,x_n)) = \mathbf{O}$. Note that the vector
$(0,x_2,\ldots,x_n)$ lies in $P(\mfi,\mfm)$ by
Lemma~\ref{lemma:inductive P and condition P}(2). By applying the
statement of condition \textbf{(P')} to $(0,x_2,\ldots,x_n)$ and $k=1$
we obtain that $e_{\beta_1}(\tau_k(x_2,\ldots,x_n))=\mathbf{O}$ as
desired. 
This completes the proof. 
\end{proof}

It remains to justify Proposition~\ref{prop:P implies P'}. 
The following simple lemma will be
helpful.

\begin{lemma}\label{lemma:concatenation linear comb}
  Let $\pi \in \Pi$ be a piecewise linear path in $X_\R$. 
\begin{enumerate} 
\item Let $\pi^{\lambda}$ be
  a linear path for some $\lambda \in X_\R$. Then 
 for any $t \in [0,1]$, there exist non-negative real 
  constants $a, c \geq 0$ and $s \in [0,1]$ such that
  \((\pi^{\lambda} \star \pi)(t) = a \lambda + c \pi(s).\) 
\item  Let 
$\beta$ be a 
  simple root. Let $x$ be a positive integer and assume that $f^x_\beta(\pi) \neq \mathbf{O}$. Then 
for any $t \in [0,1]$, 
there exists $b \in \R$ with $0 \leq b \leq x$ such that 
  \(f^x_\beta(\pi)(t) = \pi(t) + b (-\beta)\) 
where $0 \leq b \leq x$. 
\item Let $\pi \in \Pi$ be a path in $X_\R$. Let $\{\beta_1, \ldots,
  \beta_j\}$ 
  be any sequence of simple roots and $n_1,
  \ldots, n_j\in \Z_{\geq 0}$ any sequence of non-negative
  integers. Then any point along the path
  $f^{n_1}_{\beta_1}(\pi^{n_1 \varpi_{\beta_1}} \star
  f^{n_2}_{\beta_2}(\pi^{n_2 \varpi_{\beta_2}} \star \cdots
  \star f^{n_j}_{\beta_j}(\pi^{n_j\varpi_{\beta_j}}\star \pi) \cdots ))$ can be expressed as a linear combination 
\[
\sum_{\ell=1}^{j} a_\ell n_\ell \varpi_{\alpha_\ell} + \sum_{\ell=1}^{j}
b_\ell (-\beta_\ell) + c \pi(s)
\]
for some $a_\ell, b_\ell, c \geq 0$ non-negative real
constants and some $s \in [0,1]$. 
\item  Let $\mfi=(i_1, \ldots, i_n) \in \{1,2,\ldots,r\}^n$ and $\mfm=(m_1, \ldots, m_n) \in \Z^n_{\geq 0}$ be
  a word and multiplicity list and let $k$ be an integer with $1 \leq k
  \leq n$. Let
  $\varphi_k$ denote the map associated to $\mfi, \mfm$ as in Definition~\ref{definition:varphi_k}. Then any point along the path
  $\varphi_k(x_k,\ldots, x_n)$ can be expressed as a linear combination 
\begin{equation}\label{eq:cj dj} 
\sum_{\ell=k}^n a_\ell m_\ell \varpi_{\beta_\ell} + \sum_{\ell=k}^n b_\ell (-\beta_\ell)
\end{equation}
where $a_\ell, b_\ell \geq 0$. 
\end{enumerate} 
\end{lemma}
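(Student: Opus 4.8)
The plan is to prove the four parts in the order stated, using each as an ingredient for the next: parts (1) and (2) are immediate from the definitions of concatenation and of the lowering operator $f_\beta$, while (3) and (4) are short inductions built on top of them. For part (1), I would simply apply the definition of concatenation in~\eqref{eq:concatenation}: for $t\in[0,1/2]$ the point $(\pi^{\lambda}\star\pi)(t)$ equals $\pi^{\lambda}(2t)=2t\,\lambda$, so one may take $a=2t$, $c=0$ and $s$ arbitrary, and for $t\in[1/2,1]$ it equals $\pi^{\lambda}(1)+\pi(2t-1)=\lambda+\pi(2t-1)$, so one takes $a=c=1$ and $s=2t-1$; in both cases $a,c\ge 0$ and $s\in[0,1]$.

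For part (2), I would first analyze a single application of $f_\beta$ to an arbitrary path $\pi$ with $f_\beta(\pi)\ne\mathbf{O}$. Writing $\pi=\pi_1\star\pi_2\star\pi_3$ and $f_\beta(\pi)=\pi_1\star s_\beta(\pi_2)\star\pi_3$ as in Definition~\ref{definition:f_beta}, and using $s_\beta(v)=v-\langle v,\beta^\vee\rangle\beta$, a direct time-by-time comparison of the two concatenations shows that $f_\beta(\pi)(t)-\pi(t)$ is always of the form $-g(t)\,\beta$ with $0\le g(t)\le 1$: on the first piece the paths coincide; on the middle piece the reflection subtracts $\langle\pi_2(\cdot),\beta^\vee\rangle\,\beta$, and $\langle\pi_2(\cdot),\beta^\vee\rangle$ takes values in $[0,1]$ by construction of $f_\beta$; and on the last piece the two paths differ by the constant vector $-\beta$. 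Since $f^x_\beta(\pi)\ne\mathbf{O}$ forces $f^j_\beta(\pi)\ne\mathbf{O}$ for all $0\le j\le x$ (because $f_\beta(\mathbf{O})=\mathbf{O}$), iterating this estimate $x$ times gives $f^x_\beta(\pi)(t)=\pi(t)+b(-\beta)$ for some $b$ with $0\le b\le x$.

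For part (3), I would induct on the number $j$ of lowering operators. It is convenient to prove the slightly stronger statement that, setting $\pi^{(j+1)}:=\pi$ and $\pi^{(\ell)}:=f^{n_\ell}_{\beta_\ell}(\pi^{n_\ell\varpi_{\beta_\ell}}\star\pi^{(\ell+1)})$ for $\ell=j,j-1,\ldots,1$, every point along $\pi^{(\ell)}$ can be written as $\sum_{m=\ell}^{j}a_m n_m\varpi_{\beta_m}+\sum_{m=\ell}^{j}b_m(-\beta_m)+c\,\pi(s)$ with all $a_m,b_m,c\ge 0$ and $s\in[0,1]$. The base case $\ell=j+1$ is trivial. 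For the inductive step: if $\pi^{(\ell)}=\mathbf{O}$ there is nothing to prove; otherwise part (1) expresses a point of $\pi^{n_\ell\varpi_{\beta_\ell}}\star\pi^{(\ell+1)}$ as $a\,n_\ell\varpi_{\beta_\ell}+c\,\pi^{(\ell+1)}(s)$ with $a,c\ge 0$ and $s\in[0,1]$, then the inductive hypothesis rewrites $\pi^{(\ell+1)}(s)$ in the desired form, and finally part (2) (applicable since $\pi^{(\ell)}\ne\mathbf{O}$) accounts for the outermost $f^{n_\ell}_{\beta_\ell}$ by adding a non-negative multiple of $-\beta_\ell$. Since every constant produced is a product or non-negative combination of non-negative quantities, the asserted form is preserved, and the convention $\pi\star\mathbf{O}=\pi$ disposes of the cases where an inner composition degenerates to $\mathbf{O}$.

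Finally, part (4) is the special case of part (3) obtained by taking the roots to be $(\beta_k,\beta_{k+1},\ldots,\beta_{n-1})$, the integers to be $(x_k,\ldots,x_{n-1})$, and the trailing path $\pi$ to be $f^{x_n}_{\beta_n}(\pi^{m_n\varpi_{\beta_n}})$; by part (2) its points have the form $s\,m_n\varpi_{\beta_n}+b(-\beta_n)$ with $b\ge 0$, so substituting into the conclusion of part (3) absorbs the trailing $c\,\pi(s)$ term into the $\ell=n$ summands and yields exactly~\eqref{eq:cj dj}. (One could equally run the downward induction of part (3) directly on the maps $\varphi_k$ of Definition~\ref{definition:varphi_k}, which has the cosmetic advantage of not having to reconcile the two slightly different bracketings.) The only genuinely delicate point throughout is the careful treatment in parts (3) and (4) of the cases where some inner composition of root operators equals the formal symbol $\mathbf{O}$; everything else is routine bookkeeping of non-negative constants through concatenations and reflections.
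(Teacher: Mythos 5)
Your proposal is correct and follows essentially the same route as the paper: parts (1) and (2) are handled by the identical case analysis of the concatenation formula and of the three-piece decomposition $\pi_1\star s_\beta(\pi_2)\star\pi_3$ (using $\langle\pi_2(\cdot),\beta^\vee\rangle\in[0,1]$), and parts (3) and (4) by iterating (1) and (2). The only difference is that you spell out the downward induction and the $\mathbf{O}$-degeneration cases for (3)--(4), which the paper dismisses as "straightforward"; this is a welcome amount of extra care, not a divergence in method.
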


\begin{proof} 
First we prove (1). From the definition~\eqref{eq:concatenation} of paths and the
definition of a straight-line path $\pi^\lambda$ it follows that for
$t \in [0,\frac{1}{2}]$ we may take $a=2t$ and $c=0$, since
$(\pi^\lambda \star \pi)(t) = \pi^\lambda(2t) = 2t \lambda$ in this
case. On the other hand, if $t \in [\frac{1}{2},1]$ then we may take
$a=1, c=1$ and $s = 2t-1$, since by~\eqref{eq:concatenation}
we have $(\pi^\lambda \star \pi)(t) := \pi^\lambda(1) + \pi(2t-1) =
\lambda + \pi(2t-1)$. This proves the claim. 

Next we prove (2). Recall that the reflection operator $s_\beta$ acts by
$s_\beta(\alpha):= \alpha - \langle \alpha, \beta^\vee \rangle \beta$,
so for any path $\pi$ and for any time $t$ we have $s_\beta(\pi)(t) :=
s_\beta(\pi(t)) = \pi(t) - \langle \pi(t), \beta^\vee \rangle \beta =
\pi(t) + \langle \pi(t), \beta^\vee \rangle (-\beta)$ and in
particular, $s_\beta(\pi)(t)$ is a linear combination of $\pi(t)$ and
$-\beta$. Additionally, from Definition~\ref{definition:f_beta} we
know that $f_\beta(\pi) := \pi_1 \star s_\beta(\pi_2) \star \pi_3$
where $\pi_1, \pi_2$ and $\pi_3$ are defined in~\eqref{eq:definitions
  of pi_1 pi_2 pi_3} and from the discussion preceding
Definition~\ref{definition:f_beta} which defines $p$ and $x$ iti
follows that $\langle \pi_2(t), \beta^\vee \rangle \in [0,1]$ for all
$t$. 
To prove the claim we begin with the base case $x=1$. 
Consider each of the $3$ components of $f_\beta(\pi)$ in turn. For the first
portion of the path (corresponding to $\pi_1$), the operator $f_\beta$
does not alter the path at all, so for such $t$ we have
$f_\beta(\pi)(t) = \pi(t)$ and the claim of the lemma holds with
$b=0$. For $t$ in the second portion of the path, we have $\pi(t)
= \pi_1(p)+\pi_2(t')$ (here $t'$ is determined by $t$ by some
reparametrization coming from the concatenation operation) and
$f_\beta(\pi)(t) = \pi_1(p)+s_\beta(\pi_2(t')) =
\pi_1(p)+\pi_2(t')+\langle \pi_2(t'), \beta^\vee \rangle (-\beta) =
\pi(t) + \langle \pi_2(t'), \beta^\vee \rangle (-\beta)$. As we have
already seen, $\langle \pi_2(t'), \beta^\vee\rangle \in [0,1]$, so
choosing $b = \langle \pi_2(t'), \beta^\vee \rangle$ does the
job. Finally, again from the discussion preceding the definitions of
$\pi_1, \pi_2$ and $\pi_3$ it follows that $\langle \pi_2(1),
\beta^\vee\rangle = 1$ so for the last (third) portion of the path we
have that $f_\beta(\pi)(t) = (\pi(x)-\beta)+\pi_3(t'') =
(\pi(x)+\pi_3(t'') - \beta = \pi(t)-\beta$ where again $t''$ is
determined by $t$ by a reparametrization. By choosing $b=1$ we
see that the claim holds in this case also. 
Applying the same argument $x$ times yields the result. 

The statements (3) and (4) follow straightforwardly by applying (1)
and (2) repeatedly. 
\end{proof}

The following elementary observation is also conceptually useful. 
For two simple positive roots $\alpha, \beta$, we say that $\alpha$
and $\beta$ are \textbf{adjacent} if they are distinct and they
correspond to two adjacent nodes in the corresponding Dynkin
diagram. (From properties of the Cartan matrix, $\alpha$ and $\beta$
are adjacent precisely when the value of the pairing $\langle \alpha, \beta^\vee
\rangle$ is strictly negative.)  Then it is immediate that  
$A_k(x_{k+1}, \ldots, x_n)$ can be interpreted as
\begin{equation}\label{eq:Ak} 
A_k(x_{k+1}, \ldots, x_n) = m_k + \left( \sum_{\substack{j>k \\
      \beta_j = \beta_k}} (m_j - 2 x_j) \right) - \left(
  \sum_{\substack{j > k \\ \beta_j \textup{
        adjacent to } \beta_k}} x_j \langle \beta_j, \beta_k^\vee
  \rangle \right).
\end{equation}

\begin{proof}[Proof of Proposition~\ref{prop:P implies P'}]

We begin by noting that the path $\tau_n$ is by definition
$\pi^{\lambda_n}$ where $\lambda_n := m_n \be_n$. Thus $Q=0$ in this
case and by Lemma~\ref{lem:Littelmann}(3) we conclude
$e_{\be_n}(\tau_n)=\mathbf{O}$. 
So it remains to check the cases $k<n$. 
As in the discussion above, by Lemma~\ref{lem:Littelmann}(3) and by
the definition of the raising operators, in order to prove the claim 
it suffices to prove that for any 
$(x_1, \ldots, x_n) \in
    P(\mfi, \mfm)$ and any $k$ with $1 \leq k \leq n-1$, we have 
    \begin{equation}
      \label{eq:minimum pairing}
      \min_{t\in [0,1]} \{ \langle \tau_k(x_{k+1},\ldots,x_n)(t), \beta_k^\vee \rangle \}
      \geq 0 
    \end{equation}
which is equivalent to 
\begin{equation}
  \label{eq:minimum pairing m_k}
   \min_{t \in [0,1]} \{ \langle \varphi_{k+1}(x_{k+1}, \ldots, x_n)(t),
   \beta_k^\vee \rangle \geq - m_k
\end{equation}
by definition of the $\tau_k$ and $\varphi_k$. 

We use induction on the size of $n$. We already proved the case $n=1$
above so the 
base case is $n=2$ and $k=1$.  Let $\mfi =(i_1, i_2)$ with associated sequence of
simple roots $(\beta_1,
\beta_2)$ and $\mfm= (m_1, m_2)$. Let $(x_1, x_2) \in P(\mfi,\mfm)$. 
Then we have 
$0 \leq x_2 \leq m_2$ so an explicit computation shows 
$\varphi_2(x_2) = f_{\beta_2}^{x_2}(\pi^{m_2 \varpi_{\beta_2}}) =
\pi^{x_2(\varpi_{\beta_2}-\beta_2)} \star \pi^{(m_2 -
  x_2)\varpi_{\beta_2}}.$
Hence we wish to show that 
\[
\min_{t \in [0,1]} \{ \langle \varphi_2(x_2)= \pi^{x_2(\varpi_{\beta_2} -\beta_2)}
\star \pi^{(m_2-x_2)\varpi_{\beta_2}}(t), \beta_1^\vee \rangle \} \geq
-m_1.
\]
First consider the case $\beta_1
\neq \beta_2$. Since $\langle \varpi_{\beta_2}, \beta_1^\vee
\rangle = 0$ and $\langle \beta_2, \beta_1^\vee \rangle \leq 0$ for
any two distinct simple roots, and $x_2 \geq 0$ by assumption, we can
see that 
$\langle \pi^{x_2(\varpi_{\beta_2} -\beta_2)}
\star \pi^{(m_2-x_2)\varpi_{\beta_2}}(t), \beta_1^\vee \rangle \geq 0$
for all $t$. In particular the minimum value taken over all $t$ is
$0$, which is greater than or equal to $-m_1$ as desired (since $m_1 \geq 0$ by
assumption). 
Next consider the case $\beta_1 = \beta_2$. In this case, the
inequalities defining $P(\mfi,\mfm)$ are 
\[
0 \leq x_2 \leq m_2 \textup{ and }  0 \leq x_1 \leq \langle m_1
\varpi_{\beta_1} + m_2 \varpi_{\beta_2} - x_2 \beta_2, \beta_1^\vee
\rangle = m_1 + m_2 - 2x_2.
\]
From the condition \textbf{(P)}, for any choice of $x_2$ with $0 \leq
x_2 \leq m_2$ we must have $A_1(x_2) = m_1 +m_2 - 2x_2 \geq 0$. In
particular, for $x_2 = m_2$ we must have $m_1 - m_2 \geq 0$, from
which it follows $m_1 \geq m_2$. Next notice that, since the vector
$(m_2 - x_2)\varpi_{\beta_2}$ pairs non-negatively with $\beta_1^\vee
= \beta_2^\vee$, the minimum value of the function 
\[
t \mapsto \langle \pi^{x_2(\varpi_{\beta_2} -\beta_2)}
\star \pi^{(m_2-x_2)\varpi_{\beta_2}}(t), \beta_1^\vee \rangle
\]
occurs at the endpoint of $\pi^{x_2(\varpi_{\beta_2} - \beta_2)}$
where the value is $-x_2$. From the assumptions we know $x_2 \leq
m_2$, so $-x_2 \geq -m_2$. Also from the above we have seen that $m_1
\geq m_2$, so $-m_2 \geq - m_1$ and finally we obtain $-x_2 \geq
-m_1$. This completes the base case. 

We now assume by induction that the statement of the proposition holds for
words and multiplicity lists of length $\leq n-1$ and we must prove
the statement for $n$. As above, we already know the statement holds for $k=n$. Next suppose $1 < k < n$. By Lemma~\ref{lemma:inductive P and condition P} we know that $(\mfi[k], \mfm[k])$ satisfies condition \textbf{(P)} and $(x_k, \ldots, x_n)$ lies in $P(\mfi[k],\mfm[k])$. Since $\mfi[k],\mfm[k]$ have length strictly less than $n$, by the inductive assumption we know the statement holds for such $k$. Thus it remains to check the case $k=1$, i.e. that $e_{\beta_1}(\tau_1(x_2, \ldots, x_n)) = \mathbf{O}$ for $(x_1, \ldots, x_n) \in P(\mfi,\mfm)$. 
First consider the case in which the simple root $\beta_1$ does not
appear in the word $(\beta_2, \ldots, \beta_n)$. By
Lemma~\ref{lemma:concatenation linear comb}(4), any
point along the path $\varphi_2(x_2, \ldots,
x_n)$ can be written in the form 
$\sum_{\ell=2}^n a_\ell \varpi_{\beta_\ell} + \sum_{\ell=2}^n
b_\ell  (-\beta_\ell)$
where $a_\ell, b_\ell \geq 0$ are non-negative real constants,
and all the simple roots $\beta_\ell$ are distinct from
$\beta_1$. Then for any time $t$ we have 
$\langle \varphi_2(x_2, \ldots, x_n)(t), \beta_1^\vee \rangle  = \left\langle    \sum_{\ell=2}^n a_\ell \varpi_{\beta_\ell} + \sum_{\ell_2}^n
b_\ell (-\beta_\ell), \beta_1^\vee \right\rangle 
 = \left\langle - \sum_{\ell=2}^n b_\ell \beta_\ell, \beta_1^\vee
 \right\rangle 
 = - \sum_{\ell=2}^n b_\ell \langle \beta_\ell, \beta_1^\vee \rangle
 \geq 0$
where the second equality is because $\langle \varpi_{\beta_{\ell}},
\beta_1^\vee \rangle = 0$ for $\beta_\ell \neq \beta_1$ and the last
inequality is because $\langle \beta_\ell, \beta_1^\vee \rangle \leq
0$ for $\beta_\ell \neq \beta_1$.
Since $m_1 \geq 0$ by assumption, we conclude that $\langle \varphi(x_2, \ldots,
x_n)(t), \beta_1^\vee \rangle \geq 0 \geq -m_1$ for all $t$, which
yields the desired result.

Next we consider the case when $\beta_1$ occurs in the sequence
$(\beta_2,\ldots,\beta_n)$. 
Let $s$ be the smallest index with $s \geq 2$ such that $\beta_s =
\beta_1$, i.e., it is the first place after $\beta_1$ where the
repetition occurs. Since the length of $\mfi[s]$ 
is $n-1$, from the inductive assumption we know that 
$\min_{t \in [0,1]} \{ \langle \tau_s(x_{s+1},\ldots,x_n)(t), \beta_s^\vee = \beta_1^\vee
\rangle \} \geq 0$.
Note also that the path $\tau_s$ has the property that the minimum value 
$\min_{t \in [0,1]} \{ \langle \tau_s(x_{s+1},\ldots,x_n)(t), \beta_s^\vee = \beta_1^\vee
\rangle \}$ as well as the endpoint pairing $\langle
\mr{wt}(\tau_s),\beta_s^\vee \rangle$ are both integers; this follows from its
construction. Also by definition, the operator $f_{\beta_s}$ preserves these
properties; moreover, for such a path $\tau'$ it follows from the
definition of $f_{\beta_s}$ that 
$\min_{t \in [0,1]} \{ \langle f_{\beta_s}(\tau')(t), \beta_s^\vee
\rangle \} = 
\min_{t \in [0,1]} \{ \langle \tau'(t), \beta_s^\vee
\rangle \} -1$, i.e., the minimum decreases by precisely $1$. 
From this we conclude that $\varphi_s(x_s, \ldots, x_n)=
f^{x_s}_{\beta_s=\beta_1}(\tau_s)$ satisfies
\begin{equation}\label{eq:greater than -x_s}
\langle \varphi_s(x_s, \ldots, x_n)(t), \beta_1^\vee = \beta_s^\vee \rangle
\geq -x_s \textup{ for all } t \in [0,1].
\end{equation}
By definition $\varphi_2(x_2, \ldots, x_n)$ is obtained from $\varphi_s(x_s,
\ldots, x_n)$ 
by 
\[
\varphi_2(x_2, \ldots, x_n) := f_{\beta_2}^{x_2}(\pi^{m_2 \varpi_{\beta_2}}
\star ( \cdots f_{\beta_{s-1}}^{x_{s-1}}(\pi^{m_{s-1}
  \varpi_{\beta_{s-1}}} \star \varphi_s(x_s, \ldots, x_n)) \cdots ).
\]
By assumption, $\beta_1$ is distinct from all the roots $\beta_\ell$
for $2 \leq \ell \leq s-1$. 
Thus $\langle \varpi_{\beta_\ell}, \beta_1^\vee \rangle=0$ and $\langle -\beta_\ell, \beta_1^\vee \rangle \geq 0$ for $2 \leq \ell \leq s-1$ and 
from Lemma~\ref{lemma:concatenation linear comb}(3) it follows that 
\[
\min_{t \in [0,1]} \{ \langle \varphi_2(x_2, \ldots, x_n)(t), \beta_1^\vee \rangle\} \geq 
\min_{t \in [0,1]} \{ \langle \varphi_s(x_s, \ldots, x_n)(t), \beta_1^\vee \rangle \}.
\]
Since we know from~\eqref{eq:greater than -x_s} that the RHS above
is $\geq -x_s$, it now suffices to
prove that $x_s \leq m_1$. 
Since $(x_1, \ldots, x_n) \in P(\mfi,\mfm)$, we know $(y_s, x_{s+1}, \ldots,
x_n) \in P(\mfi[s], \mfm[s])$ if $0 \leq y_s \leq A_s(x_{s+1}, \ldots,
x_n)$. Also since $(\mfi,\mfm)$ satisfies condition \textbf{(P)}, from
Lemma~\ref{lemma:inductive P and condition P}(2) we know that 
$(y_2, \ldots, y_n) \in 
P(\mfi[2], \mfm[2])$, where 
$y_2=\cdots=y_{s-1}=0$,
$y_s = A_s(x_{s+1}, \ldots, x_n)$, and $y_k=x_k$ for $k\geq s+1$. Then from the condition \textbf{(P)} we conclude that 
\begin{equation*}
   \begin{split}
     A_1(y_2, \ldots, y_n) & = m_1 + (m_s - 2y_s) + 
 \left( \sum_{\substack{k>s \\
       \beta_k = \beta_1=\beta_s}} (m_k - 2 x_k) \right) - \left(
   \sum_{\substack{k > s \\ \beta_k \textup{
         adjacent to } \beta_1=\beta_s}} x_k \langle \beta_k, \beta_1^\vee=\beta_s^\vee
   \rangle \right) \\ 
  & = m_1 + A_s(x_{s+1}, \ldots, x_n) - 2 y_s = m_1 - A_s(x_{s+1},
  \ldots, x_n) \geq 0. 
   \end{split}
 \end{equation*}
or in other words $m_1 \geq A_s(x_{s+1}, \ldots, x_n)$. But the
original $x_s$ was required to satisfy the inequality $x_s \leq
A_1(x_{s+1}, \ldots, x_n)$, from which it follows that $x_s \leq m_1$
as was to be shown. This completes the inductive argument and
hence the proof. 
\end{proof}

\section{Newton-Okounkov bodies of Bott-Samelson varieties}\label{section:NOBY}

The main result of this manuscript is Theorem~\ref{theorem:main},
which gives an explicit description of the Newton-Okounkov body of
$(Z_{\mfi}, L_{\mfi,\mfm})$ with respect to a certain geometric
valuation (to be described in detail below), provided that the word
$\mfi$ corresponds to a reduced word decomposition and
the pair $(\mfi,\mfm)$ satisfies condition \textbf{(P)}.

We first very briefly recall the ingredients in the definition of a
Newton-Okounkov body. For details we refer the reader to
\cite{KavehKhovanskii2012, LazarsfeldMustata2009}. We
begin with the definition of a valuation (in our setting). 

\begin{definition}\label{definition:valuation}
Let $A$ be a $\C$-algebra and $\Gamma$ a totally ordered set with
order $<$. We say that a function $\nu: A \setminus \{0\} \to \Gamma$
is a \textbf{valuation} if 
\begin{enumerate}
\item $\nu(fg)=\nu(f) + \nu(g)$ for all $f,g \in A \setminus \{0\}$, 
\item $\nu(f+g) \geq \min\{\nu(f),\nu(g)\}$ for all $f, g \in A
  \setminus \{0\}$ with $f+g \neq 0$,
\item $\nu(cf) = \nu(f)$ for all $f \in A \setminus \{0\}$ and $c \in
  \C^*$. 
\end{enumerate}
The image in $\Gamma$ of $A \setminus \{0\}$ is clearly a semigroup
and is called the \textbf{value semigroup} of the pair
$(A,\nu)$. Moreover, if in addition the valuation has the property
that 
\begin{enumerate}
\item[(iv)] if $\nu(f)=\nu(g)$, then there exists a non-zero constant
  $\lambda \neq 0 \in \C$ such that $\nu(g-\lambda f)>\nu(g)$ or else
  $g-\lambda f = 0$
\end{enumerate} 
then we say the valuation has \textit{one-dimensional leaves}. 
\end{definition}

In the construction of Newton-Okounkov bodies, we consider valuations
on rings of sections of line bundles. More specifically, let $X$ be a
complex-$n$-dimensional algebraic variety over $\C$,
equipped with a line bundle $L = \mathcal{O}_X(D)$ for some (Cartier)
divisor $D$. 
Consider the corresponding (graded) $\C$-algebra of sections $R =R(L) :=
\oplus_{k \geq 0} R_k$ where $R_k := H^0(X, L^{\otimes k})$. We
now describe a way to geometrically construct a valuation. (Not all valuations
arise in this manner but this suffices for our purposes.) Suppose given a flag
\[
Y_{\bullet}: X = Y_0 \supseteq Y_1 \supseteq \cdots \supseteq Y_{n-1}
\supseteq Y_n = \{\textup{pt}\}
\]
of irreducible subvarieties of $X$ where $\codim_\C(Y_\ell) = \ell$
and each $Y_\ell$ is non-singular at the point
$Y_n=\{\textup{pt}\}$. Such a flag defines a valuation $\nu_{Y_\bullet}:
H^0(X,L) \setminus \{0\} \to \Z^n$ by an inductive procedure involving
restricting sections to each subvariety and considering its order
of vanishing along the next (smaller) subvariety, as follows. 
We will assume that all $Y_i$ are smooth (though this is
not necessary, cf. \cite{LazarsfeldMustata2009}). 
Given a non-zero section $s \in H^0(X, L=\mathcal{O}_X(D))$,
we define 
\[
\nu_1 := \mathrm{ord}_{Y_1}(s)
\]
i.e. the order of vanishing of $s$ along $Y_1$. By choosing a local
equation for $Y_1$ in $X$, we can construct a section $\tilde{s}_1 \in
H^0(X, \mathcal{O}_X(D - \nu_1 Y_1))$ that does not vanish identically
on $Y_1$. By restricting we obtain a non-zero section $s_1 \in
H^0(Y_1, \mathcal{O}_{Y_1}(D - \nu_1 Y_1))$, and define $\nu_2 :=
\mathrm{ord}_{Y_2}(s_1)$. We define each $\nu_i$ by proceeding inductively in the same fashion. 
It is not difficult to see that $\nu_{Y_\bullet}$ thus defined gives a 
valuation with one-dimensional leaves on each $R_k$.

Given such a valuation $\nu$, we may then define 
\[
S(R) = S(R, \nu) := \bigcup_{k >0} \{(k, \nu(\sigma)) \mid
\sigma \in R_k \setminus \{0\} \} \subset \N \times \Z^n
\]
(cf. also \cite[Definition 1.6]{LazarsfeldMustata2009}, where the notation slightly differs)
which can be seen to be an additive semigroup. Now define
$C(R) \subseteq \R \times \R^n$ to be the cone generated by the
semigroup $S(R)$, i.e., it is the smallest closed convex cone centered
at the origin containing $S(R)$. We can now define the central object
of interest. 

\begin{definition}\label{definition:NO}
  Let $\Delta=\Delta(R)=\Delta(R,\nu)$ be the slice of the cone $C(R)$ at $\ell=1$
  projected to $\R^n$ via the projection to the second factor $\R
  \times \R^n \to \R^n$. In other words 
\[
\Delta = \overline{ \textup{conv} \left( \bigcup_{k>0}
    \left\{\frac{x}{k} :  (k,x) \in S(R) \right\} \right) }.
\]
The convex body $\Delta$ is called the \textbf{Newton-Okounkov body of
  $R$} with respect to the valuation $\nu$. 
\end{definition}

In the current manuscript, the geometric objects under study are the
Bott-Samelson variety $Z_\mfi$ and the line bundle $L_{\mfi,\mfm}$
over it. Following the notation above, we wish to study the
Newton-Okounkov body of $R(L_{\mfi,\mfm})=\oplus_{k >0} H^0(Z_\mfi,
L_{\mfi,\mfm}^{\otimes k})$. 
We begin with a description of the flag $Y_\bullet$ of subvarieties with respect
to which we will define a valuation. 
Given $\ell$ with $1 \leq \ell \leq n$, we define a subvariety
$Y_\ell$ of $Z_\mfi$ of codimension $\ell$ by 
\[
Y_\ell := \{ [(p_1, \ldots, p_n)] : p_s=e \textup{ for the last
  $\ell$ coordinates, i.e. for } n-\ell+1 \leq s \leq n \}. 
\]
The subvariety $Y_\ell$ is smooth, since it is evidently isomorphic to
the Bott-Samelson variety $Z_{(i_1, \ldots, i_{n-\ell})}$. In Kaveh's
work on Newton-Okounkov bodies and crystal bases \cite{Kaveh2011}, he
introduces a set of coordinates, which he denotes $(t_1, \ldots, t_n)$, near the point $Y_0 =
\{[(e,e,\ldots,e)]\}$. Near $Y_0$, 
our flag $Y_\bullet$ can be described using Kaveh's coordinates as 
\[
\{t_n=0\} \supset \{t_n=t_{n-1}=0\} \supset \cdots \supset
\{t_n=\cdots=t_2=0\} \supset \{(0,0,\ldots,0)\}.
\]

\begin{remark}\label{remark:valuations}
In particular, with respect to Kaveh's coordinates, our geometric
valuation $\nu_{Y_\bullet}$ is the lowest-term valuation on polynomials
in $t_1, \ldots, t_n$ with respect to the lexicographic order with
$t_1<t_2<\cdots<t_n$. Thus our valuation is different from the
valuation used by Kaveh in \cite{Kaveh2011} and Fujita in \cite{Fujita2015}, since
they take the highest-term valuation with respect to the lexicographic
order with the variables in the reverse order, $t_1>t_2>\cdots>t_n$. In general, it seems to be a rather subtle problem to understand the dependence of the Newton-Okounkov body on the choice of valuation, cf. for instance the discussion in \cite[Remark 2.3]{Kaveh2011}.
\end{remark}

We now state the main theorem of this section, which
is also the main result of this manuscript. Let $P(\mfi,\mfm)$ denote
the polytope of Definition~\ref{def:polytope}.
In the statement below, 
$P(\mfi,\mfm)^{op}$ denotes the points in $P(\mfi,\mfm)$ with
coordinates reversed, i.e. $P(\mfi,\mfm)^{op} := \{(x_n, \ldots, x_1):  (x_1, \ldots, x_n)
\in P(\mfi,\mfm) \}$.
(The reversal of the ordering on coordinates arises because, locally near $Y_n =
\{[e,e,\ldots,e]\}$ and in Kaveh's coordinates, $Y_i$ is given by the
equations $\{t_{n-i+1} = \cdots = t_n=0\}$, i.e. the \emph{last} coordinates
are $0$. So for example $\nu_1(s)$ is the order of vanishing of $s$
along $\{t_n=0\}$, not $\{t_1=0\}$.)

\begin{theorem}\label{theorem:main} 
Let $\mfi=(i_1, \ldots, i_n) \in \{1,2,\ldots,r\}^n$ be a word and
$\mfm=(m_1, \ldots, m_n) \in \Z^n_{\geq 0}$ be a multiplicity list. Let $Z_\mfi$ and
$L_{\mfi,\mfm}$ denote the associated Bott-Samelson variety and line
bundle respectively. Suppose that $\mfi$
corresponds to a reduced word decomposition
and that 
$(\mfi, \mfm)$ satisfies condition \textbf{(P)}.  Consider the valuation $\nu_{Y_\bullet}$ defined
above and let $S(R(L_{\mfi,\mfm}))$ denote the corresponding value
semigroup. 
Then
\begin{enumerate}
\item the degree-$1$ piece $S_1 := S(R(L_{\mfi,\mfm})) \cap \{1\} \times \Z^n$ of
  $S(R(L_{\mfi,\mfm}))$ is equal to $P(\mfi, \mfm)^{op} \cap \Z^n$
  (where we identify $\{1\} \times \Z^n$ with $\Z^n$ by projection to
  the second factor),
\item $S(R(L_{\mfi,\mfm}))$ is generated by $S_1$, so in particular it is finitely
  generated, and 
\item the Newton-Okounkov body $\Delta = \Delta(R(L_{\mfi,\mfm}))$ of
  $Z_\mfi$ and $L_{\mfi,\mfm}$ with respect to $\nu_{Y_\bullet}$ is
  equal to the polytope $P(\mfi, \mfm)^{op}$. 
\end{enumerate}
\end{theorem}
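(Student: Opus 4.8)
The plan is to bootstrap the degree-one information to all graded pieces using the identity $L_{\mfi,\mfm}^{\otimes k}=L_{\mfi,k\mfm}$, then read off the Newton--Okounkov body by a density argument and the finite generation from a normality property of $P(\mfi,\mfm)$. First I would record two elementary scaling facts: for every $k\ge 1$ one has $P(\mfi,k\mfm)=k\cdot P(\mfi,\mfm)$, and if $(\mfi,\mfm)$ satisfies condition \textbf{(P)} then so does $(\mfi,k\mfm)$. Both follow by induction on $n$ from Definition~\ref{def:polytope} and Definition~\ref{definition:conditionP}, since each $A_j$ is a $\Z$-linear function of $(x_{j+1},\dots,x_n)$ plus a constant, so replacing $\mfm$ by $k\mfm$ and $(x_{j+1},\dots,x_n)$ by $(kx_{j+1},\dots,kx_n)$ scales $A_j$ by $k$. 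Next I would observe that $R_k:=H^0(Z_\mfi,L_{\mfi,\mfm}^{\otimes k})=H^0(Z_\mfi,L_{\mfi,k\mfm})$ and that $\nu_{Y_\bullet}$ restricted to $R_k$ is precisely the geometric valuation attached to the \emph{same} flag $Y_\bullet$ for the line bundle $L_{\mfi,k\mfm}$; as $\mfi$ is reduced and $(\mfi,k\mfm)$ again satisfies \textbf{(P)}, the hypotheses of Proposition~\ref{proposition:subset} and Proposition~\ref{proposition:bijection} hold for $(\mfi,k\mfm)$.

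Then I compute each graded piece. Fix $k\ge 1$. By Proposition~\ref{proposition:subset} applied to $(\mfi,k\mfm)$, the set $\nu_{Y_\bullet}(R_k\setminus\{0\})$ is contained in $P(\mfi,k\mfm)^{op}\cap\Z^n = k\cdot P(\mfi,\mfm)^{op}\cap\Z^n$. Conversely, $\nu_{Y_\bullet}$ has one-dimensional leaves on $R_k$, so by \cite[Proposition 2.6]{KavehKhovanskii2012} we have $\lvert\nu_{Y_\bullet}(R_k\setminus\{0\})\rvert=\dim_\C R_k$; by Theorem~\ref{theorem:LLM} this equals $\lvert\mc{T}(\mfi,k\mfm)\rvert$, and by Proposition~\ref{proposition:bijection} this equals $\lvert P(\mfi,k\mfm)\cap\Z^n\rvert=\lvert k\cdot P(\mfi,\mfm)^{op}\cap\Z^n\rvert$. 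A subset of a finite set having the cardinality of the whole set is the whole set, hence
\[
\nu_{Y_\bullet}(R_k\setminus\{0\})=k\cdot P(\mfi,\mfm)^{op}\cap\Z^n ,\qquad\text{i.e.}\qquad S(R(L_{\mfi,\mfm}))\cap(\{k\}\times\Z^n)=\{k\}\times\bigl(k\cdot P(\mfi,\mfm)^{op}\cap\Z^n\bigr).
\]
Taking $k=1$ proves part (1).

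For part (3): by the displayed formula, $\bigcup_{k>0}\{x/k:(k,x)\in S(R(L_{\mfi,\mfm}))\}=\bigcup_{k>0}\tfrac1k\bigl(k\cdot P(\mfi,\mfm)^{op}\cap\Z^n\bigr)$, and since $P(\mfi,\mfm)^{op}$ is a lattice (hence rational) polytope, every rational point $a/k\in P(\mfi,\mfm)^{op}$ with $a\in\Z^n$ satisfies $a\in k\cdot P(\mfi,\mfm)^{op}\cap\Z^n$, so this union is dense in $P(\mfi,\mfm)^{op}$; as the latter is closed and convex, $\Delta=\overline{\mathrm{conv}(\cdots)}=P(\mfi,\mfm)^{op}$. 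For part (2): since $S(R(L_{\mfi,\mfm}))$ is a semigroup it contains $\underbrace{S_1+\cdots+S_1}_{k}=\{k\}\times\bigl((P^{op}\cap\Z^n)+\cdots+(P^{op}\cap\Z^n)\bigr)$, so by part (1) it suffices to show $P:=P(\mfi,\mfm)$ has the integer decomposition property, i.e. $kP\cap\Z^n$ is the $k$-fold Minkowski sum of $P\cap\Z^n$. I would argue by induction on $n$ (the case $n=1$, $P=[0,m_1]$, being clear). Given $z=(z_1,\dots,z_n)\in kP\cap\Z^n$, the image of $P$ under $(x_1,\dots,x_n)\mapsto(x_2,\dots,x_n)$ is $P(\mfi[2],\mfm[2])$ by Lemma~\ref{lemma:inductive P and condition P}(1),(2) (using $A_1\ge 0$ there), and $(\mfi[2],\mfm[2])$ satisfies \textbf{(P)} by Lemma~\ref{lemma:inductive P and condition P}(3); so by induction $(z_2,\dots,z_n)=\sum_{j=1}^k w^{(j)}$ with $w^{(j)}\in P(\mfi[2],\mfm[2])\cap\Z^{n-1}$. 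Since $(z_2/k,\dots,z_n/k)$ is the average of the $w^{(j)}$ and $A_1$ is affine, condition (P-1) gives $0\le z_1\le k\,A_1(z_2/k,\dots,z_n/k)=\sum_{j=1}^k A_1(w^{(j)})$ with each summand a non-negative integer; hence $z_1=\sum_{j=1}^k z_1^{(j)}$ with integers $0\le z_1^{(j)}\le A_1(w^{(j)})$, and the $(z_1^{(j)},w^{(j)})\in P\cap\Z^n$ decompose $z$. (Alternatively one can invoke that, under \textbf{(P)}, $P(\mfi,\mfm)$ is the polytope of a basepoint-free torus-invariant divisor on a smooth projective toric variety, but the direct argument above is self-contained.)

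The genuinely substantive inputs---Proposition~\ref{proposition:subset} and Proposition~\ref{proposition:bijection}---are available, so most of the above is bookkeeping; the only place a real argument is needed is the integer decomposition property for part (2), where one must exploit the ``iterated interval fibration'' shape of $P(\mfi,\mfm)$ forced by condition \textbf{(P)} (via the affineness and non-negativity of the $A_k$) rather than any general-position facts about lattice polytopes, which fail in this generality.
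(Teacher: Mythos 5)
Your proposal is correct, and for parts (1) and (3) it follows the same route as the paper: the counting argument combining the one-dimensional-leaves property of $\nu_{Y_\bullet}$ with Proposition~\ref{proposition:subset}, Theorem~\ref{theorem:LLM} and Proposition~\ref{proposition:bijection} pins down $S_1$, and density of the rational points of the lattice polytope $P(\mfi,\mfm)^{op}$ (via Proposition~\ref{prop:P is lattice}) gives the Newton--Okounkov body. The genuine difference is part (2). The paper's own proof of (2) consists of the observations that $S_k=P(\mfi,k\mfm)^{op}\cap\Z^n$ and $P(\mfi,k\mfm)=k\cdot P(\mfi,\mfm)$, and then asserts that this ``justifies the second statement''; but generation of $S$ by $S_1$ additionally requires that every lattice point of $k\cdot P(\mfi,\mfm)$ be a sum of $k$ lattice points of $P(\mfi,\mfm)$ (the integer decomposition property), which fails for general lattice polytopes in dimension $\ge 3$. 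You supply exactly this missing ingredient, by an induction on $n$ that projects onto $P(\mfi[2],\mfm[2])$ (using Lemma~\ref{lemma:inductive P and condition P}) and exploits the affineness, integrality and, via condition \textbf{(P)}, non-negativity of $A_1$ to split the first coordinate. You are also slightly more careful than the paper in noting that the \emph{equality} $S_k=P(\mfi,k\mfm)^{op}\cap\Z^n$ (rather than just the inclusion of Remark~\ref{remark:scalar multiple}) requires re-running the counting argument in each degree $k$, which in turn needs that $(\mfi,k\mfm)$ again satisfies condition \textbf{(P)}; your scaling observation handles this. In short, your write-up is a strict refinement of the paper's argument: same skeleton, but with the integer-decomposition step for (2) actually proved rather than asserted.
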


Before diving into the proof of Theorem~\ref{theorem:main}, we explain
the basic structure of our argument. 
Our first step is 
Proposition~\ref{proposition:subset}, where we show that the image of $\nu_{Y_\bullet}$ is
always a subset of the polytope $P(\mfi, \mfm)^{op}$. This is the most
important step in our argument; here we need that $\mfi$ is reduced. 
Then, under the additional assumption that $(\mfi,\mfm)$ satifies
condition \textbf{(P)}, the results of Section~\ref{sec:bijection}
allows us to quickly conclude 
that $\nu_{Y_{\bullet}}$ gives a surjection from $S_1$ to $P(\mfi,
\mfm)^{op} \cap \Z^n$, from which the theorem follows.

We need some preliminaries. 
For each $j$ with
$1 \leq j \leq n$ let $C_j$ denote the curve in $Z_\mfi$ given by
setting all but the $j$-th coordinate in $[(p_1, \ldots, p_n)] \in
Z_\mfi$ equal to $e$. Note that the curves are isomorphic to $\P^1$. 
The lemma below is from 
\cite[Section 3.7]{GrossbergKarshon1994}.

\begin{lemma}\label{lemma:degree}
Let $\lambda_1, \ldots, \lambda_n$ be a sequence of weights. The degree of the restriction of the line bundle
$L_{\mfi}(\la_1,\dots,\la_n)$ on $Z_\mfi$ to the curve $C_n$ is equal to $\lee \la_n,\be_n^{\vee}\ree$. 
\end{lemma}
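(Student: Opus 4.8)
The plan is to reduce the computation to the classical fact that a line bundle on $\P^1$ is determined up to isomorphism by its degree, so it suffices to trivialize $L_\mfi(\lambda_1,\ldots,\lambda_n)$ restricted to $C_n$ explicitly and read off the transition function. First I would identify the curve $C_n$ inside $Z_\mfi$: setting $p_1 = \cdots = p_{n-1} = e$, the point $[(e,\ldots,e,p_n)]$ depends only on the class of $p_n$ in $P_{\beta_n}/B \cong \P^1$, so the inclusion $C_n \hookrightarrow Z_\mfi$ is the obvious embedding $P_{\beta_n}/B \to (P_{\beta_1} \times \cdots \times P_{\beta_n})/B^n$. Pulling back $L_\mfi(\lambda_1,\ldots,\lambda_n)$ along this map, only the last factor $\C^*_{\lambda_n}$ contributes, since $e^{\lambda_j}(b_j)$ evaluated on the relevant stabilizer data involving $p_1 = \cdots = p_{n-1} = e$ is trivial. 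Concretely, the restriction is the associated bundle $P_{\beta_n} \times_B \C^*_{\lambda_n}$ over $P_{\beta_n}/B$.

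Next I would compute the degree of $P_{\beta_n} \times_B \C^*_{\lambda_n}$ over $P_{\beta_n}/B \cong \P^1$. This is a standard computation: $P_{\beta_n}/B$ is the flag variety of the rank-one semisimple part of $P_{\beta_n}$ (an $SL_2$ or $PGL_2$), and the line bundle $P_{\beta_n}\times_B \C_{-\lambda_n}$ (in the convention where sections are functions $f$ with $f(pb) = e^{\lambda_n}(b)^{-1} f(p)$) has degree $\langle \lambda_n, \beta_n^\vee\rangle$ — this is exactly the Borel--Weil statement for $SL_2$. One then checks the sign/duality bookkeeping: by the parenthetical remark after~\eqref{def:rep}, $\C^*_{\lambda_n} \cong \C_{-\lambda_n}$, and chasing this through Definition~\ref{definition:line bundle} shows that the degree of $L_\mfi(\lambda_1,\ldots,\lambda_n)|_{C_n}$ is $\langle \lambda_n, \beta_n^\vee\rangle$ with the stated sign.

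I expect the main obstacle to be purely bookkeeping: keeping track of the several conventions in play — the left-versus-right action on $P_{\beta_1}\times\cdots\times P_{\beta_n}$, the sign in the definition~\eqref{def:rep} of $\C^*_{(\lambda_1,\ldots,\lambda_n)}$, and the identification of which generator of $\Pic(\P^1)\cong\Z$ is ``degree $1$'' — so that the final answer comes out as $\langle \lambda_n, \beta_n^\vee\rangle$ and not its negative. Once the $C_n$-restriction is isolated as $P_{\beta_n}\times_B\C^*_{\lambda_n}$, the actual degree computation is the classical rank-one Borel--Weil calculation and carries no real difficulty; alternatively one may simply cite \cite[Section 3.7]{GrossbergKarshon1994} for this last step, as the lemma statement already does.
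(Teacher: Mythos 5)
Your proposal is correct. Note that the paper does not actually prove this lemma --- it only cites \cite[Section 3.7]{GrossbergKarshon1994} --- so there is no internal argument to compare against; your sketch (restricting to $C_n\cong P_{\beta_n}/B$, observing that the stabilizer computation forces $b_1=\cdots=b_{n-1}=e$ so that only the factor $\C^*_{\lambda_n}\cong\C_{-\lambda_n}$ survives, and then invoking the rank-one Borel--Weil degree computation) is exactly the standard verification behind that citation, and the sign comes out right since for $\lambda_n=m_n\varpi_{\beta_n}$ with $m_n\ge 0$ the bundle is globally generated and hence of non-negative degree $m_n=\langle\lambda_n,\beta_n^\vee\rangle$ on $C_n$.
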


In what follows we also need the following codimension-$1$ subvarieties
(divisors) on $Z_\mfi$. For $1 \leq j \leq n$ let 
$Z_{\mfi(j)}$ denote the subvariety of $Z_\mfi$ obtained by requiring the 
$j$-th coordinate of $[(p_1, \ldots, p_n)] \in Z_\mfi$ to be equal to
$e$. Notice that $Z_{\mfi(n)}$ is the same as our $Y_1$
above, and is also naturally isomorphic to the smaller Bott-Samelson
variety $Z_{(i_1, \ldots, i_{n-1})}$ associated to the word obtained
by deleting the last entry in $\mfi$. Also note that 
since $Z_{\mfi(n)}$ is an irreducible subvariety of
codimension $1$, it determines a line bundle $\OO(Z_{\mfi(n)}$. 
We will need the following lemma,
which computes the restriction of certain line bundles on $Z_\mfi$ to
$Z_{\mfi(n)}$. 

\begin{lemma}\label{lemma:restriction of line bundles}
Let $\lambda_1, \ldots, \lambda_n$ be a sequence of weights. Then the restriction to $Z_{\mfi(n)}$ of the line bundle $L_{\mfi}(\la_1,
\ldots, \la_n)$ is isomorphic to $L_{\mfi(n)}(\la_1, \ldots,
\la_{n-2}, \la_{n-1}+\la_n)$ on $Z_{(i_1, \ldots, i_{n-1})}$. Moreover, the restriction 
of $\OO(Z_{\mfi(n)})$ to $Z_{\mfi(n)}$ is isomorphic to
$L_{(i_1,\dots,i_{n-1})}(0,\dots,0,\be_n)$ on $Z_{(i_1,\ldots,i_{n-1})}$.
\end{lemma}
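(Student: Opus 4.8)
The plan is to work directly with the explicit quotient construction of the line bundles in Definition~\ref{definition:line bundle}, identifying $Z_{\mfi(n)}$ with the smaller Bott-Samelson variety $Z_{(i_1,\ldots,i_{n-1})}$ via the obvious map $[(p_1,\ldots,p_{n-1},e)] \mapsto [(p_1,\ldots,p_{n-1})]$, which is well-defined and an isomorphism because fixing $p_n = e$ forces $b_{n-1} \in B$ to act trivially on what remains (and conversely). For the first assertion, I would restrict the total space $(P_{\be_1}\times\cdots\times P_{\be_n})\times_{B^n}\C^*_{(\mfla_1,\ldots,\mfla_n)}$ to the locus $p_n = e$. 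On this locus one has the freedom to use $b_{n-1}$ to normalize $p_n$: a general point $[(p_1,\ldots,p_{n-1},p_n),k]$ with $p_n$ arbitrary but required (after the $B^n$-action) to be $e$ means $p_n \in B$, and absorbing $p_n$ into $b_{n-1}$ via the substitution $b_{n-1} \mapsto b_{n-1} p_n$ changes the character weight contributed in the $(n-1)$-st slot from $e^{\mfla_{n-1}}(b_{n-1})$ to $e^{\mfla_{n-1}}(b_{n-1})\,e^{\mfla_{n-1}}(p_n)$, while the $n$-th slot contributes $e^{\mfla_n}(p_n)$; since $p_n \in B$ and $\mfla_{n-1},\mfla_n$ are characters of $B$ (dominant weights), these multiply to $e^{\mfla_{n-1}+\mfla_n}(p_n)$. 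Tracking this bookkeeping carefully shows the restricted bundle is exactly the associated bundle with character $\C^*_{(\mfla_1,\ldots,\mfla_{n-2},\mfla_{n-1}+\mfla_n)}$ over $Z_{(i_1,\ldots,i_{n-1})}$, i.e. $L_{\mfi(n)}(\la_1,\ldots,\la_{n-2},\la_{n-1}+\la_n)$.

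For the second assertion, the key is that $\OO(Z_{\mfi(n)})$ restricted to $Z_{\mfi(n)}$ is the normal bundle $N_{Z_{\mfi(n)}/Z_\mfi}$ of the divisor. I would compute this normal bundle directly: a normal direction at a point of $Z_{\mfi(n)}$ is a tangent vector to the fiber direction $P_{\be_n}/B \cong \P^1$ in the $n$-th factor. Concretely, the tangent space to $P_{\be_n}/B$ at the identity coset is the $(-\be_n)$-root space (or $\be_n$-root space, depending on conventions) of $\mathfrak{g}$ as a $B$-representation; the resulting $B$-equivariant line, once pushed through the quotient by the remaining $b_{n-1}$-action, contributes precisely the character $e^{\be_n}$ in the last slot and nothing elsewhere. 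This identifies $N_{Z_{\mfi(n)}/Z_\mfi}$ with $L_{(i_1,\ldots,i_{n-1})}(0,\ldots,0,\be_n)$. Alternatively, and perhaps more cleanly, one can cite \cite[Section 3.7]{GrossbergKarshon1994} and verify consistency by restricting further to the curve $C_{n-1} \subset Z_{\mfi(n)}$: by Lemma~\ref{lemma:degree} the degree of $L_{(i_1,\ldots,i_{n-1})}(0,\ldots,0,\be_n)$ on $C_{n-1}$ is $\langle \be_n, \be_{n-1}^\vee\rangle$, which matches the known self-intersection/normal-bundle computation for Bott-Samelson divisors.

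The main obstacle I anticipate is the careful bookkeeping of the $B^n$-action under the normalization $p_n = e$ in the first part: one must be precise about which $b_j$ can be used to normalize which $p_j$, and about the sign/duality conventions for the characters $e^{\mfla}$ (recall the representation $\C^*_{(\mfla_1,\ldots,\mfla_n)}$ in \eqref{def:rep} is twisted by an inverse and is isomorphic to $\C_{(-\mfla_1,\ldots,-\mfla_n)}$). Getting the normal-bundle character right in the second part — in particular whether it is $\be_n$ or $-\be_n$, and reconciling this with the stated $L_{(i_1,\ldots,i_{n-1})}(0,\ldots,0,\be_n)$ — requires fixing conventions consistently with Definition~\ref{def:bott-samelson} and Definition~\ref{definition:line bundle}; the cross-check against Lemma~\ref{lemma:degree} on $C_{n-1}$ is the safest way to pin this down. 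Everything else is a routine unwinding of definitions.
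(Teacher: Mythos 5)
Your proposal follows essentially the same route as the paper: the first claim is proved by the explicit quotient description (the residual $B^n$-stabilizer of the locus $p_n=e$ is $\{(b_1,\ldots,b_{n-1},b_{n-1})\}$, so the last two character slots merge into $\mfla_{n-1}+\mfla_n$, which is exactly the well-definedness check the paper performs for its map $[(p_1,\ldots,p_{n-1},k)]\mapsto[(p_1,\ldots,p_{n-1},e,k)]$), and the second claim is proved by identifying $\OO(Z_{\mfi(n)})\vert_{Z_{\mfi(n)}}$ with the normal bundle and computing it as the associated bundle to $T_{eB}(P_{\be_n}/B)$, whose $B$-weight $-\be_n$ becomes $\be_n$ under the inverse in \eqref{def:rep}. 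Your sign hedging is resolved exactly this way in the paper, and the degree cross-check on $C_{n-1}$ is a reasonable extra sanity check.
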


\begin{proof}
Consider the map $\varphi: L_{\mfi(n)}(\la_1, \ldots, \la_{n-1}+\la_n)
\to L_{\mfi}(\la_1, \ldots, \la_n) \vert_{Z_{\mfi(n)}}$ given by
$[(p_1, \ldots, p_{n-1}, k)] \mapsto [(p_1, \ldots, p_{n-1}, e,
k)]$. Then $\varphi$ gives the required isomorphism. Indeed, $\varphi$ is 
well-defined as can be seen by the computation
\begin{equation*}
\begin{split}
[(p_1 b_1, b_1^{-1}p_2 b_2, \ldots, b_{n-2}^{-1}p_{n-1}b_{n-1},
e=b_{n-1}^{-1}b_{n-1}), k] & = 
[(p_1, p_2, \ldots, p_{n-1}, e, e^{-\la_1}(b_1) \cdots
e^{-\la_{n-1}}(b_{n-1}) e^{-\la_n}(b_{n-1}) k)] \\
 & = [(p_1, p_2, \ldots, p_{n-1}, e, e^{-\la_1}(b_1) \cdots
e^{-(\la_{n-1}+\la_n)}(b_{n-1})k)]
\end{split} 
\end{equation*}
in $L_{\mfi}(\la_1, \ldots, \la_n)$. It can be checked similarly that $\varphi$ is injective, and surjectivity is immediate from its definition. 

For the second claim, recall that 
the restriction $\OO(D)\vert_D$ is the normal bundle to $D$ (see
e.g. \cite[Exercise 21.2H]{VakilFOAG}). Applying this to
$Z_{\mfi(n)}$, it suffices to show that the normal bundle to
$Z_{\mfi(n)}$ in $Z_{\mfi}$ is isomorphic to
$L_{(i_1,\dots,i_{n-1})}(0,\dots,0,\be_n)$. Now note $Z_{\mfi}$ is a
$P_{\beta_n}/B$-bundle over $Z_{\mfi(n)} \cong Z_{(i_1, \ldots,
  i_{n-1})}$, and since $Z_{\mfi(n)}$ is defined by setting the last coordinate equal to $e$, 
the normal bundle in question can be identified with $Z_{(i_1, \ldots, i_{n-1})} \times_B T_{eB}(P_{\be_n}/B)$. 
The weight of the action of $B$ on the tangent space $T_{eB}(P_{\be_n}/B)$ at the
identity coset $eB$ of $P_{\be_n}/B$ is $-\be_n$. Thus the normal bundle
is precisely $L_{(i_1,\ldots,i_{n-1})}(0,\dots,0,\be_n)$ as desired. 

\end{proof}

The important step towards the proof of the main
result is the following, which states that the image of the valuation
is contained inside the polytope $P(\mfi, \mfm)^{op}$.

\begin{proposition}\label{proposition:subset}
Let $\mfi=(i_1, \ldots, i_n) \in \{1,2,\ldots,r\}^n$ be a word and $\mfm=(m_1, \ldots, m_n) \in \Z^n_{\geq 0}$
a multiplicity list. 
Let $Z_\mfi$ and $L_{\mfi, \mfm}$ be the Bott-Samelson
variety and line bundle specified by $\mfi, \mfm$ and let
$\nu_{Y_\bullet}$ denote the geometric valuation specified by the flag
$Y_\bullet$ given above. 
Assume that $\mfi$ corresponds to a reduced word
decomposition. Then 
\[
\nu_{Y_\bullet}(H^0(Z_\mfi, L_{\mfi,\mfm}) \setminus \{0\}) \subseteq
P(\mfi,\mfm)^{op} \cap \Z^n.
\]
\end{proposition}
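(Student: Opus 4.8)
The plan is to unwind the inductive definition of $\nu_{Y_\bullet}$ and match the successive orders of vanishing to the functions $A_k$ defining $P(\mfi,\mfm)$. Concretely, fix a nonzero section $s \in H^0(Z_\mfi, L_{\mfi,\mfm})$. The first coordinate $\nu_1 = \mathrm{ord}_{Y_1}(s)$ is the order of vanishing of $s$ along $Y_1 = Z_{\mfi(n)}$; I would first show $0 \le \nu_1 \le \langle m_n \varpi_{\beta_n}, \beta_n^\vee \rangle = m_n = A_n$. The upper bound should come from Lemma~\ref{lemma:degree}: restricting $L_{\mfi,\mfm}$ to the curve $C_n \cong \P^1$ gives a line bundle of degree $\langle \lambda_n, \beta_n^\vee\rangle = m_n$, and since $C_n$ meets $Y_1 = Z_{\mfi(n)}$ transversally in a single point (here reducedness of $\mfi$ should guarantee the relevant intersection is as expected and not the whole curve), a section vanishing to order $> m_n$ along $Y_1$ would vanish identically on $C_n$, and then on all of $Z_\mfi$ by a translation/irreducibility argument. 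The lower bound $\nu_1 \ge 0$ is automatic since $\nu_1$ is an order of vanishing.

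Next I would set up the induction. After dividing by the local equation for $Y_1 = Z_{\mfi(n)}$ raised to the power $\nu_1$, we get a section $\tilde s_1 \in H^0(Z_\mfi, \OO(D - \nu_1 Z_{\mfi(n)}))$ not vanishing identically on $Z_{\mfi(n)}$; restrict it to $Z_{\mfi(n)} \cong Z_{(i_1,\ldots,i_{n-1})}$. By Lemma~\ref{lemma:restriction of line bundles}, the restriction of $L_{\mfi,\mfm}$ to $Z_{\mfi(n)}$ is $L_{\mfi(n)}(\lambda_1, \ldots, \lambda_{n-2}, \lambda_{n-1}+\lambda_n)$, and the restriction of $\OO(Z_{\mfi(n)})$ to $Z_{\mfi(n)}$ is $L_{(i_1,\ldots,i_{n-1})}(0,\ldots,0,\beta_n)$. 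Hence the restricted section $s_1$ lives in
\[
H^0\bigl(Z_{(i_1,\ldots,i_{n-1})},\, L_{(i_1,\ldots,i_{n-1})}(\lambda_1, \ldots, \lambda_{n-2}, \lambda_{n-1}+\lambda_n - \nu_1 \beta_n)\bigr).
\]
The remaining coordinates $\nu_2, \ldots, \nu_n$ of $\nu_{Y_\bullet}(s)$ are, by construction, exactly $\nu_{Y'_\bullet}(s_1)$ for the analogous flag $Y'_\bullet$ on the smaller Bott-Samelson variety $Z_{(i_1,\ldots,i_{n-1})}$. So by induction on $n$ (applied to this smaller variety and the weight sequence $(\lambda_1, \ldots, \lambda_{n-2}, \lambda_{n-1}+\lambda_n - \nu_1\beta_n)$), we get that $(\nu_2, \ldots, \nu_n)$ lies in the polytope defined by the inequalities of Definition~\ref{def:polytope} attached to this smaller data. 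I would then check that these inequalities, together with $0 \le \nu_1 \le m_n$, are precisely the inequalities defining $P(\mfi,\mfm)$ after reversing coordinates: the key bookkeeping is that the weight $\lambda_{n-1} + \lambda_n - \nu_1 \beta_n$ replacing $\lambda_{n-1}$ makes $A_{n-1}(\nu_1) = \langle m_{n-1}\varpi_{\beta_{n-1}} + m_n \varpi_{\beta_n} - \nu_1 \beta_n, \beta_{n-1}^\vee\rangle$ reappear exactly as the bound on the next coordinate, and similarly down the line, since $\varpi_{\beta_n}$ only enters $A_k$ through the $-\nu_1\beta_n$ term in the obvious way.

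The main obstacle, I expect, is the upper-bound step on each order of vanishing — i.e. showing that a section vanishing to order exceeding the relevant degree bound along the divisor $Z_{\mfi(\bullet)}$ must vanish identically. The naive transversality-with-$C_j$ argument needs care: one must know that the curve $C_j$ is not contained in the divisor and meets it in exactly one reduced point, and that vanishing on $C_j$ propagates to vanishing on all of $Z_\mfi$ (using that $B$ acts with a dense orbit, or an explicit covering of $Z_\mfi$ by translates of such curves). This is the place where the reducedness hypothesis on $\mfi$ genuinely enters — without it the line bundle $L_{\mfi,\mfm}$ need not be globally generated in the relevant direction and the clean degree count can fail. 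I would handle this by combining Lemma~\ref{lemma:degree} with the observation that the base locus considerations of \cite{HaradaYang2014} (or a direct argument with the $P_{\beta_n}/B$-bundle structure from the proof of Lemma~\ref{lemma:restriction of line bundles}) control exactly when such a section can vanish identically; everything else in the argument is routine induction and the bookkeeping with the $A_k$'s.
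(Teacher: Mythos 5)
Your overall strategy coincides with the paper's: induct by restricting to $Y_1=Z_{\mfi(n)}$, use Lemma~\ref{lemma:restriction of line bundles} to identify the restricted bundle (twisted by $\OO(-\nu_1 Z_{\mfi(n)})$) as $L_{(i_1,\ldots,i_{n-1})}(\lambda_1,\ldots,\lambda_{n-2},\lambda_{n-1}+\lambda_n-\nu_1\beta_n)$, and read off each bound $A_k$ as the degree of the current bundle on the curve $C_k$ via Lemma~\ref{lemma:degree}. The bookkeeping you describe for how $A_{n-1}(\nu_1)$ reappears is exactly what the paper does (``continuing similarly''), and, like you, the paper implicitly generalizes the inductive statement to weight sequences not of the form $m\varpi$.

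The genuine gap is in the step you yourself flag: the upper bound $\nu_1\le m_n$. Your argument is complete only when $s|_{C_n}\not\equiv 0$, and the proposed remedies for the degenerate case (``translation/irreducibility,'' ``base locus considerations of \cite{HaradaYang2014}'') are gestures, not arguments --- a nonzero section can certainly vanish identically on the single curve $C_n$. The paper closes this by a different mechanism: writing $\mr{div}(s)=\nu_1 Z_{\mfi(n)}+E$ with $E$ effective, invoking \cite[Proposition 3.5]{LauritzenThomsen2004} (this is precisely where reducedness of $\mfi$ enters: for reduced words every effective line bundle on $Z_\mfi$ is $\OO(\sum_k a_k Z_{\mfi(k)})$ with $a_k\ge 0$), and using the Chow-class identity $[C_n]\cdot[Z_{\mfi(j)}]=\delta_{jn}$ to get $\nu_1\le a_n=\deg(L_{\mfi,\mfm}|_{C_n})=m_n$. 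If you want to stay within your restriction-to-curves framework, the correct completion is to replace $C_n$ by a \emph{general} fiber of the $\P^1$-bundle $Z_\mfi\to Z_{\mfi(n)}$: every fiber has degree $m_n$ for $L_{\mfi,\mfm}$ and meets the section $Y_1$ transversally in one point, and $s$ cannot vanish on all fibers; equivalently, $E\cdot[C_n]\ge 0$ because $[C_n]$ is the class of a moving curve. Note also that your sketch never pins down where reducedness is actually used, whereas in the paper it is the hypothesis of the Lauritzen--Thomsen structure theorem; any complete write-up must make this dependence explicit.
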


\begin{proof} 
Let $0\ne s\in  H^0(Z_\mfi, L_{\mfi,\mfm})$ with
$\nu_{Y_\bullet}(s)=(x_n,x_{n-1},\dots, x_1)$. We wish to show that
$(x_1, \ldots, x_n) \in P(\mfi,\mfm)$, for which It is enough to
show that $x_n\le m_n$ and $x_k\le A_k(x_{k+1},\dots, x_n)$ for $1
\leq k \leq n-1$. 

We first prove that $x_n\le m_n$. Since 
$m_i \geq 0$ for all $i$, by \cite[Corollary 3.3]{LauritzenThomsen2004} the bundle $L_{\mfi,\mfm}$ is globally generated and hence effective. Moreover, $\mfi$ is reduced by assumption, so we 
can conclude from \cite[Proposition
3.5]{LauritzenThomsen2004} 
that 
\[L_{\mfi,\mfm} \cong \mc{O}\left(\sum_{k=1}^n a_k Z_{\mfi(k)} \right) \] for some
integers 
$a_k\ge 0$, $1 \leq k \leq n$. 
Also  since $x_n=\nu_1(s)=\mr{ord}_{Z_\mfi(n)}(s)$ is the order of
vanishing of $s$ along $Y_1 = Z_{\mfi(n)}$, we know $\mr{div}(s)=x_n
Z_{\mfi(n)} +E$ for some effective divisor $E$. Since $\mr{div}(s)$ is
linearly equivalent to $\sum_{k=1}^n a_k Z_{\mfi(k)}$ we may conclude 
\begin{equation}\label{eq:linear equivalence}
E\sim -x_nZ_{\mfi(n)} +\sum_{k=1}^n a_k Z_{\mfi(k)}
\end{equation}
where $\sim$ denotes linear
equivalence. Considering now the corresponding Chow classes, we may
compare the (intersection) product of both sides of~\eqref{eq:linear
  equivalence} with the class $[C_n] \in A^*(Z_{\mfi})$. The Chow ring
$A^*(Z_{\mfi})$ and the classes $[Z_{\mfi(k)}]$ have been extensively
studied and it is known (cf. \cite{Demazure1974,
  LauritzenThomsen2004}, see also \cite[Proposition 2.11]{Perrin2007})
that $[C_n] \cdot [Z_{\mfi(j)}] = \delta_{jn}$. Thus we obtain
that the product 
$\textup{(RHS of~\eqref{eq:linear equivalence})} \cdot [C_n] = -x_n +
  a_n$, whereas the product $\textup{(LHS of~\eqref{eq:linear equivalence})}
\cdot [C_n] = b_n \geq 0$ since $E$ is effective. 
Hence $x_n \leq a_n$. Furthermore, from \cite[Proposition 2.11]{Perrin2007} and
from basic properties of intersection products, we may
also conclude that 
$a_n$ is the degree of the restriction $L_{\mfi,\mfm} \vert_{C_n}$ of
the line bundle $L_{\mfi,\mfm}$ to the curve $C_n$ (which is
isomorphic to $\P^1$, so $A_0(C_n)\cong \Z$). By
Lemma~\ref{lemma:degree} above,
this degree is precisely equal to $\lee
m_n\varpi_n,\be_n^{\vee}\ree=m_n$. Thus \(x_n \leq m_n\) as was to be
shown.

Next, we consider $x_{n-1}=\nu_2(s)=\mr{ord}_{Y_2}(s_1)$, where $0\ne
s_1\in H^0(Y_1=Z_{\mfi(n)}, L_{\mfi,\mfm}\otimes \OO(-x_nZ_{\mfi(n)})
\vert_{Y_1=Z_{\mfi(n)}})$ and $s_1$ is constructed from $s$ in the
  fashion described above. Note that $Z_{\mfi(n)} \cong Z_{(i_1,
    \ldots, i_{n-1})}$. Thus, repeating the same argument as given
  above, we may deduce that 
  $x_{n-1}$ is at most the degree of the restriction of the line bundle $L_{\mfi,\mfm}\otimes
  \OO(-x_nZ_{\mfi(n)}) \vert_{Y_1 = Z_{\mfi(n)}}$ to the curve
  $C_{n-1}$. 

From Lemma~\ref{lemma:restriction of line bundles} we know that the restriction of $L_{\mfi,\mfm}$ to
$Z_{\mfi(n)} \cong Z_{(i_1, \ldots, i_{n-1})}$ is isomorphic to the
line bundle 
$L_{(i_1, \ldots, i_{n-1})}(m_1\varpi_{\beta_1},\dots,m_{n-2}\varpi_{\beta_{n-2}},m_{n-1}\varpi_{\beta_{n-1}}+m_n\varpi_{\beta_n})$
in the notation of~\eqref{eq:definition line bundle}, and also from
Lemma~\ref{lemma:restriction of line bundles} we know 
\(\OO(Z_n)|_{Z_n}\cong L_{(i_1, \ldots, i_{n-1})} (0,\dots,0,\be_n).\) Thus we have 
\begin{equation}\label{eq:restricted line bundle} 
L_{\mfi,\mfm}\otimes
  \OO(-x_nZ_{\mfi(n)}) \vert_{Y_1 = Z_{\mfi(n)}} \cong L_{(i_1,
    \ldots, i_{n-1})}(m_1 \varpi_1, \ldots, m_{n-2}
  \varpi_{\beta_{n-2}}, m_{n-1}\varpi_{\beta_{n-1}}+m_n\varpi_{\beta_n}
    - x_n \beta_n).
\end{equation}
Since $s_1$ is a non-zero global section, the line bundle
in~\eqref{eq:restricted line bundle} above is effective. Thus by
again applying 
\cite[Proposition
3.5]{LauritzenThomsen2004} we can write it as $\OO(\sum_, a'_k Z_k)$ where
$a'_k \geq 0$. By proceeding with the same argument as before, since
the degree of~\eqref{eq:restricted line bundle} along $C_{n-1}$ is
precisely 
\[\lee
m_{n-1}\varpi_{n-1}+m_n\varpi_n-x_n\be_n,\be_{n-1}^{\vee}\ree=A_{n-1}(x_n)\] 
we may conclude $x_{n-1}\le A_{n-1}(x_n)$. Continuing similarly,
we obtain $(x_1, \ldots,x_n) \in P(\mfi,\mfm)$ as
desired. 
\end{proof}

\begin{remark}\label{remark:scalar multiple}
Note that since a scalar multiple $r\mfm$ is also a multiplicity list for any
positive integer $r$, it immediately follows from the above
proposition that 
\[
\nu_{Y_\bullet}(H^0(Z_\mfi, L_{\mfi,\mfm}^{\otimes r}) \setminus \{0\})
\subseteq P(\mfi, r\mfm)^{op} \cap \Z^n
\]
for any $r\in \N$. 
\end{remark}

To complete the argument we need to recall the following fact from \cite{HaradaYang2014}. 

\begin{proposition}\label{prop:P is lattice}
If $(\mfi,\mfm)$ satisfies condition
\textbf{(P)}, then $P(\mfi,\mfm)$ is a \emph{lattice} polytope. 
\end{proposition}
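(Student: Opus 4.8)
The plan is to induct on the length $n$ of the word $\mfi$, stripping off the first coordinate at each stage. For the base case $n=1$ condition \textbf{(P)} reduces to the single requirement $m_1 \geq 0$, so $P(\mfi,\mfm) = [0,m_1]$ is a lattice interval and we are done.

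For the inductive step I would set $B := P(\mfi[2],\mfm[2]) \subseteq \R^{n-1}$, with coordinates $x_2,\dots,x_n$. By Lemma~\ref{lemma:inductive P and condition P}(3) the pair $(\mfi[2],\mfm[2])$ again satisfies condition \textbf{(P)}, so the inductive hypothesis says $B$ is a lattice polytope. Write $g := A_1$; formula~\eqref{eq:Ak} shows $g$ is an affine function of $(x_2,\dots,x_n)$ with integer coefficients, and condition (P-1) says exactly that $g \geq 0$ on $B$ (the hypothesis set appearing in (P-1) is precisely $B$). Comparing with Definition~\ref{def:polytope}, one sees that
\[
P(\mfi,\mfm) = \{(x_1,\dots,x_n) : (x_2,\dots,x_n) \in B,\ 0 \leq x_1 \leq g(x_2,\dots,x_n)\},
\]
i.e. $P(\mfi,\mfm)$ is the region ``below the graph of $g$'' over the base $B$.

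The heart of the argument is then the elementary fact that if $B$ is a lattice polytope and $g$ an affine function with integer coefficients with $g \geq 0$ on $B$, then $Q := \{(t,y) : y \in B,\ 0 \leq t \leq g(y)\}$ is a lattice polytope. It is a polytope since it is bounded ($B$ is bounded and $g$ is bounded on $B$) and cut out by finitely many linear inequalities. For the vertices: the projection $(t,y)\mapsto y$ maps $Q$ onto $B$ (using $g \geq 0$), so any vertex $p=(t_0,y_0)$ has $y_0 \in B$ and $0 \leq t_0 \leq g(y_0)$; if $0 < t_0 < g(y_0)$ then $p$ is the midpoint of a nondegenerate segment of $Q$ obtained by moving in the $t$-direction, hence not a vertex, so $t_0 = 0$ or $t_0 = g(y_0)$. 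Now $\{t=0\}\cap Q = \{0\}\times B$ and $\{t = g(y)\}\cap Q = \{(g(y),y) : y \in B\}$ are faces of $Q$ — they are the subsets of $Q$ on which the affine functions $t$ and $g(y)-t$ attain their minimum $0$ — and the identifications with $\{0\}\times B$ and with the graph of $g|_B$ again use $g\geq 0$ on $B$. Being a vertex of $Q$ lying on one of these faces, $p$ is a vertex of that face, hence equals $(0,v)$ or $(g(v),v)$ for some vertex $v$ of $B$ (using that $y\mapsto(g(y),y)$ is an affine isomorphism onto the graph face); since $v$ is a lattice point and $g$ has integer coefficients, $p$ is a lattice point. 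Applying this with $Q = P(\mfi,\mfm)$ finishes the induction.

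The only point I expect to require care is the bookkeeping that the two ``horizontal'' pieces of the boundary of $Q$ are honestly faces equal to $\{0\}\times B$ and to the graph of $g|_B$, which is exactly where the hypothesis $g \geq 0$ on $B$ (i.e. condition (P-1)) is used; everything else is routine convex geometry, with the induction driven by Lemma~\ref{lemma:inductive P and condition P}(3). One could also simply invoke the explicit identification of the vertices of $P(\mfi,\mfm)$ with the Cartier data of a torus-invariant divisor from \cite[Theorem~2.4]{HaradaYang2014}.
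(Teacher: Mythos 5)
Your argument is correct, but it is genuinely different from what the paper does: the paper gives no proof of this proposition at all, instead recalling it as a fact from \cite{HaradaYang2014}, where it follows from the identification of $P(\mfi,\mfm)$ (under condition \textbf{(P)}) with a Grossberg--Karshon twisted cube whose vertices are the Cartier data of a torus-invariant divisor on an associated toric variety. Your route is a self-contained induction on $n$: strip off the first coordinate, observe that $P(\mfi,\mfm)$ is the region between $x_1=0$ and the graph of the integer-affine function $A_1$ over the base $B=P(\mfi[2],\mfm[2])$, use Lemma~\ref{lemma:inductive P and condition P}(3) to propagate condition \textbf{(P)}, and note that condition (P-1) is exactly the nonnegativity $A_1\geq 0$ on $B$ needed to make the top and bottom of this region honest faces affinely isomorphic to $B$, so that every vertex is of the form $(0,v)$ or $(A_1(v),v)$ with $v$ a vertex of $B$. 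The bookkeeping is all in order (including the degenerate case where $A_1$ vanishes identically on $B$, which just collapses the two faces). What your approach buys is independence from \cite{HaradaYang2014} and from any toric geometry; what the paper's citation buys is the stronger statement that the vertices are explicitly computable as Cartier data, which the introduction advertises as the source of the computational efficiency of the main theorem. Either is acceptable here, since only the lattice property is used in the proof of Theorem~\ref{theorem:main}.
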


We are finally ready to prove the main result.

\begin{proof}[Proof of Theorem~\ref{theorem:main}]
We begin with the first claim of the theorem. It is 
  elementary that if a
  valuation $\nu: V \to \Gamma$ (for $V$ a finite-dimensional complex
  vector space and $\Gamma$ a totally ordered group) has
  one-dimensional leaves, then the cardinality $\lvert \nu(V \setminus
  \{0\}) \rvert$ of the image of $\nu$ is equal to $\dim_\C(V)$ 
 \cite[Proposition 2.6]{KavehKhovanskii2012}. Since our valuation
  $\nu_{Y_\bullet}$ has one-dimensional leaves on $R_1$, we conclude
  $\lvert \nu_{Y_\bullet}(R_1 \setminus \{0\}) \rvert =
  \dim_\C(R_1)=\dim_\C(H^0(Z_\mfi, L_{\mfi,\mfm}))$. On the other
  hand, we know from Proposition~\ref{proposition:subset} that the
  image of $\nu_{Y_\bullet}$ on $R_1=H^0(Z_\mfi,L_{\mfi,\mfm})$ must lie
  in $P(\mfi,\mfm)^{op} \cap \Z^n$. 
. Proposition~\ref{proposition:bijection} implies 
  $\lvert P(\mfi,\mfm)^{op}\cap\Z^n \rvert = \lvert
  P(\mfi,\mfm)\cap\Z^n\rvert
  =\mr{dim}_\C(H^0(Z_{\mfi},L(\mfi,\mfm)))$, so we conclude that
  $S_1 := S(R) \cap \{1\} \times \Z^n$ (which by definition is the
  image of $\nu_{Y_\bullet}: R_1 \setminus \{0\} \to P(\mfi,\mfm)^{op}
  \cap \Z^n$) is precisely $P(\mfi, \mfm)^{op} \cap \Z^n$. Here we
  identify $\{1\} \times \Z^n$ with $\Z^n$ by projection to the second
  factor. This proves the first statement of the theorem. 

By
  Remark~\ref{remark:scalar multiple} we also conclude that $S_k$ is equal to $P(\mfi, r
  \mfm)^{op} \cap \Z^n$. From the definition of the polytopes
  $P(\mfi,\mfm)$ it follows that $P(\mfi, r \mfm) = r \cdot
  P(\mfi,\mfm)$. This justifies the second statement of the
  theorem. Finally, the last statement of the theorem now follows
  directly from
Definition~\ref{definition:NO} and Proposition~\ref{prop:P is lattice}.

\end{proof}

\section{Examples}\label{sec:examples}

In this section, we give several concrete examples in order to
illustrate the results in the manuscript. 

Let $G=SL(3,\C)$ with Borel subgroup $B$ the
upper-triangular matrices and $T$ the diagonal subgroup. The rank $r$
is $2$ in this case and we let $\{\alpha_1, \alpha_2\}$ be the usual
positive simple roots corresponding to the simple transpositions $s_1
= (12)$ and $s_2=(23)$ in the Weyl group $W=S_3$. 

For all of the examples below, we consider the Bott-Samelson variety
$Z_\mfi$ where $\mfi=(1,2,1)$ corresponds to the reduced word decomposition
$s_1 s_2 s_1$ of the longest element $w_0$ in $W=S_3$. 

\begin{example} 
Let $\mfm=(1,1,1)$. Then it can be easily checked that $(\mfi=(1,2,1),
\mfm=(1,1,1))$ satisfies
condition \textbf{(P)}. 
The figure below illustrates the polytope $P(\mfi,\mfm)$ which is (up
to a re-ordering of coordinates) the Newton-Okounkov body of
$Z_{(1,2,1)}$ with line bundle $L_{(1,2,1),(1,1,1)}$ with respect to
our valuation $\nu_{Y_\bullet}$. For visualization purposes, the
vertices of the polytope are indicated by black dots, while the other
lattice points are indicated by white dots. 

\begin{picture}%(1,1,1)
  (100,100)(-70,20)
 \put(0,0){\vector(1,0){100}}\put
(0,0){\vector(0,1){100}}\put
(0,0){\vector(-1,-1){30}}

\linethickness{0.25mm}\put(-3,-3){$\bullet$}\put(-3,37){$\bullet$}\put(17,-3){$\bullet$}\put(17,57){$\bullet$}
\put(-20,-20){$\bullet$}\put(25,-20){$\bullet$}\put(25,20){$\bullet$}
\curve(-18,-18,27,-18)\curve(27,-18,27,22)\curve(27,22,20,60)
\curve(20,60,20,0)\curve(20,60,0,40)\curve(0,40,-18,-18)
\curve(-18,-18,27,22)\curve(20,0,27,-18)\curve(0,0,0,37)\curve(-2,0,18,0)
\curve(0,0,-18,-18)

\put(5, 100){$x_1$}\put(110,0){$x_2$}
\put(-35,-40){$x_3$}
\put(-3,-3){$\circ$}\put(17,-3){$\circ$}\put(-3,17){$\circ$}\put(17,17){$\circ$}\put(17,37){$\circ$}
\put(0,-20){$\circ$}\put(25,-20){$\circ$}\put(25,0){$\circ$} \put(0,-2){$\circ$}

\color{blue}\put(120,40)

\end{picture}\\\\\\

\end{example} 

\begin{example} 
Let $\mfm=(2,1,1)$. Again it can be checked easily that $(\mfi,\mfm)$
satifies condition \textbf{(P)}. The polytope $P(\mfi, \mfm)$,
i.e. the Newton-Okounkov body of $Z_\mfi$
and $L_{\mfi,\mfm}$ (again up to reordering of coordinates), is
illustrated below. 

\begin{picture}%(2,1,1)
   (100,100)(-70,20)
 \put(-4,0){\vector(1,0){100}}\put
(-4,0){\vector(0,1){100}}\put
(-4,0){\vector(-1,-1){25}}

\linethickness{0.25mm}\put(-6,-3){$\bullet$}\put(-6,17){$\circ$}\put(-6,37){$\circ$}
\put(-6,59){$\bullet$}\put(-25,0){$\bullet$}\put(-24,-20){$\bullet$}\put(22,-20){$\bullet$}
\put(22,38){$\bullet$}
\curve(-5,60,-23,3)\curve(-23,3,-23,-18)\curve(-23,-18,24,-18)\curve(24,-18,24,40)
\curve(24,40,17,80)\curve(17,80,-4,61)
\put(15,77){$\bullet$}\curve(24,40,-23,3)\curve(-4,-2,-4,60)\curve(-6,-2,-22,-18)
\curve(-5,0,16,0)\curve(24,-18,17,0)\curve(18,78,17,0)

\put(5, 100){$x_1$}\put(110,0){$x_2$}
\put(-35,-30){$x_3$}
\put(0,0){$\circ$}\put(15,-3){$\bullet$}\put(0,20){$\circ$}\put(15,17){$\circ$}\put(15,37){$\circ$}\put(15,57){$\circ$}
\put(0,-20){$\circ$}\put(22,0){$\circ$} \put(22,20){$\circ$}

\color{red}\put(120,40)

\end{picture}\\\\\\\\

\end{example}

As a final example we consider a
choice of multiplicity list for which the pair $(\mfi,\mfm)$ does
\emph{not} satisfy condition \textbf{(P)}; it can be seen below that
the corresponding $P(\mfi,\mfm)$ is not a lattice polytope. 

\begin{example}\label{example:nonexample}
Let $\mfm = (0,1,1)$.  Then one can check easily that $(\mfi,\mfm)$
does not satisfy condition \textbf{(P)}. The polytope $P(\mfi,\mfm)$
is illustrated below. The vertex which is not a lattice point is
indicated in red. This example was also mentioned in 
Remark~\ref{remark:P' does not imply P}.

\begin{picture}%(0,1,1)
  (100,100)(-70,20)
 \put(0,0){\vector(1,0){100}}\put
(0,0){\vector(0,1){100}}\put
(0,0){\vector(-1,-1){30}}
\linethickness{0.25mm}
\put(5, 100){$x_1$}\put(110,0){$x_2$}
\put(-35,-35){$x_3$}
\put(-3,-3){$\bullet$}\put(17,-3){$\bullet$}\put(-3,17){$\bullet$}\put(17,17){$\circ$}\put(17,37){$\bullet$}
{\color{red}\put(-10,-10){$\bullet$}}\put(0,-20){$\bullet$}\put(25,-20){$\bullet$}\put(25,0){$\bullet$}
\curve(0,0,-7,-7)\curve(-7,-7,2,-18)\curve(2,-18,27,-18)\curve(27,-18,27,2)\curve(27,2,2,-18)\curve(2,-18,0,20)
\curve(0,20,-7,-7)\curve(0,20,20,40)\curve(20,40,20,0)\curve(20,40,27,2)\curve(20,0,27,-18)
\put(120,40)

\put(-30,-15){\small{$(0,0,\frac{1}{2})$}}
\end{picture}\\\\\\\\\\

\end{example}

\def\cprime{$'$}

\end{document}